\def\dashint{\,\ThisStyle{\ensurestackMath{%
  \stackinset{c}{.2\LMpt}{c}{.5\LMpt}{\SavedStyle-}{\SavedStyle\phantom{\int}}}%
  \setbox0=\hbox{$\SavedStyle\int\,$}\kern-\wd0}\int}
\def\ddashint{\,\ThisStyle{\ensurestackMath{%
  \stackinset{c}{.2\LMpt}{c}{.5\LMpt+.2\LMex}{\SavedStyle-}{%
    \stackinset{c}{.2\LMpt}{c}{.5\LMpt-.2\LMex}{\SavedStyle-}{%
      \SavedStyle\phantom{\int}}}}\setbox0=\hbox{$\SavedStyle\int\,$}\kern-\wd0}\int}
\theoremstyle{plain}
\def \eps{\varepsilon}
\def \R{\mathbb R}
\begin{document}


\theoremstyle{plain}
\newtheorem{theorem}{Theorem} [section]
\newtheorem{corollary}[theorem]{Corollary}
\newtheorem{lemma}[theorem]{Lemma}
\newtheorem{proposition}[theorem]{Proposition}
\newtheorem{example}[theorem]{Example}
\theoremstyle{definition}
\newtheorem{definition}[theorem]{Definition}%
\theoremstyle{remark}
\newtheorem{remark}{Remark}

\renewcommand{\theequation}{\arabic{section}.\arabic{equation}}

\title{On the elasto-plastic filtration equation}

\author{Arturo de Pablo, Fernando Quir\'os and Julio D. Rossi}

\address{A.\,de Pablo.
        Departamento de Matem\'{a}ticas, Universidad Carlos III de Madrid, 28911-Legan\'{e}s, Spain \& Instituto de Ciencias Matem\'aticas ICMAT (CSIC-UAM-UC3M-UCM), 28049-Madrid, Spain.}
\email{arturop@math.uc3m.es}

\address{F.\,Quir\'os.
        Departamento de Matem\'{a}ticas, Universidad Aut\'onoma de Madrid,
        \& Instituto de Ciencias Matem\'{a}ticas ICMAT (CSIC-UAM-UCM-UC3M),
        28049-Madrid, Spain.}
\email{fernando.quiros@uam.es}

\address{J.\,D.\,Rossi.
        Departamento de Matem{\'a}tica y Estadistica,
        Universidad Torcuato di Tella,
        Av. Figueroa Alcorta 7350, (1428),
        Buenos Aires, Argentina.}
\email{julio.rossi@utdt.edu}

\newcommand{\pablo}[1]{{\color{blue}{#1}}}
\long\def\comment#1{\marginpar{\raggedright\small$\bullet$\ #1}}

\keywords{Elasto-plastic filtration equation, fully nonlinear equations, viscosity solutions, large-time behaviour, boundary layer.}

\subjclass[2020]{35K55, 35B30, 35B40, 35Q91, 35D40.}

\date{}

\begin{abstract}
    We study the fully nonlinear heat equation $b(\partial_tu)\partial_tu=\Delta u$ posed in a bounded domain with Dirichlet boundary conditions. Here $b(s)=b^-$ if $s<0$, $b(s)=b^+$ if $s>0$, $b^-\neq b^+$ being two positive constants. This equation models the flow of an elastic fluid in an elasto-plastic porous medium.  We are interested in the existence and uniqueness of viscosity solutions and in their asymptotic behaviour as $t\to\infty$ and when $b^-\to 0^+$ or $b^+\to +\infty$. We also characterize solutions of the problem as limits of a minimization dynamic game.
\end{abstract}

\maketitle

\section{Introduction}\label{sec.intro}

We study the Cauchy-Dirichlet problem associated to the elasto-plastic equation of filtration, that is,
\begin{equation} \label{eq.main.intro}
    \begin{cases}
    \displaystyle b(\partial_tu) \partial_tu (x,t) = \Delta u(x,t)& \text{in } \Omega \times (0,+\infty), \\
    \displaystyle u(x,t) = 0& \text{on } \partial \Omega \times (0,+\infty), \\
    \displaystyle u(x,0) = u_0(x)& \text{in } \Omega,
    \end{cases}
\end{equation}
where
\begin{equation} \label{eq.b.intro}
    b(s) = \begin{cases}
    b^-, & s< 0, \\
    b^+, & s\geq 0,
    \end{cases}
\end{equation}
for two positive constants  $b^-\neq b^+$ and a bounded smooth domain $\Omega\subset\R^N$, $N\ge1$. We fix for instance $b^+>b^->0$, the other case $0<b^+<b^-$ being analogous.

Problem~\eqref{eq.main.intro}--\eqref{eq.b.intro} was proposed by Barenblatt and Krylov in~\cite{Barenblatt-Krylov-1955} to model the flow of an elastic fluid in an elasto-plastic porous medium, under the assumption that the porous medium is irreversibly deformable. In such a flow, the pressure $p$, the density $\rho$, the porosity $m$ (the relative volume occupied in the medium by the pores), and the velocity $\mathbf{v}$ are governed by the physical laws:
\begin{align*}
    \text{Conservation of mass:} \quad &\partial_t (m\rho) + \nabla \cdot (\rho \mathbf{v}) = 0; \\
    \text{Darcy's law:}  \quad& \mathbf{v} = -\frac{\kappa}{\mu} \nabla p;\\
    \text{Equation of state of the fluid:}\quad & \frac{\rho}{\rho_0} = 1 + \beta_f (p - p_0);\\
    \text{Equation of state of the medium:} \quad &\partial_t m = m_0\beta_\mathfrak{m}\partial_t p.
\end{align*}
Here:
\begin{itemize}
    \item $\kappa$ is the coefficient of permeability, determining the resistance of the porous medium to the fluid leaking through it;
    \item $\mu$ is the coefficient of viscosity of the fluid;
    \item the constant $\beta_f>0$ is the coefficient of compressibility of the fluid (which is assumed to be weakly compressible);
    \item $p_0$ and $\rho_0$ are respectively the reference pressure and density of the fluid.
    \item $\beta_\mathfrak{m}>0$ is the coefficient of compressibility of the porous medium, given by
\[
    \beta_\mathfrak{m}=
    \begin{cases}
    \beta^- & \text{if } \partial_t p < 0,\\
    \beta^+ & \text{if } \partial_t p > 0,
    \end{cases}
\]
with $\beta^-$, $\beta^+$  positive constants, since the  medium is assumed to be elasto-plastic;
    \item $m_0$ is the reference value for the porosity.
\end{itemize}
Eliminating $\rho$, $m$, and $\mathbf{v}$ we obtain, to first order, that the excess fluid pressure $u=p-p_0$ is a solution to
\[
    b(\partial_t u)\partial_t u =\Delta u,\qquad\textup{where }
    b(\partial_t p)=
    \begin{cases}
        \displaystyle\frac{\mu m_0(\beta_f + \beta^-)}{\kappa}&\textup{if }\partial_t u < 0,\\[10pt]
        \displaystyle\frac{\mu m_0(\beta_f + \beta^+)}{\kappa} &\textup{if }\partial_t u > 0.
    \end{cases}
\]
Thus, $u$ is a solution to~\eqref{eq.main.intro}--\eqref{eq.b.intro}  with
\[
    b^- = \frac{\mu m_0(\beta_f + \beta^-)}{\kappa},\qquad b^+=\frac{\mu m_0(\beta_f + \beta^+)}{\kappa}.
\]
See~\cite{Barenblatt-Entov-Ryzhik-1990} for further details of the derivation of the equation.

\begin{remark}
    If we set
    \[
        \gamma = \frac{b^+ - b^-}{b^+ + b^-}, \qquad \tau = \frac{2t}{b^-+b^+},
    \]
    and then rename $\tau$ as $t$, we obtain that $u$ satisfies
    \begin{equation}\label{eq:version.gamma}
        \partial_t u+\gamma|\partial_t u|= \Delta u, \qquad \gamma\in(-1,1),
    \end{equation}
    a version of the elasto-plastic filtration equation often found in the literature.
\end{remark}

Problem~\eqref{eq.main.intro}--\eqref{eq.b.intro}  is as well a particular case of a parabolic Bellman equation of dynamic programming
\begin{equation}\label{Bellman}
    \partial_tu=\min\{L_1u,L_2u\},
\end{equation}
where $L_1u=\Delta u/b^-$, $L_2u=\Delta u/b^+$. The existence of classical solutions for equations of this type has been proved in~\cite{Evans-Lenhart}. Alternatively, we show here the existence of a viscosity solution using Perron's method, see Theorem~\ref{teo-1.intro}.
In Section~\ref{sect-games} we will arrive to Problem~\eqref{eq.main.intro}--\eqref{eq.b.intro} yet from a third point of view, as the viscosity limit of a probabilistic game. A general reference for this kind of approach is~\cite{Blanc-Rossi-2019}.

We will deal here with homogeneous Dirichlet boundary conditions, $u=0$ on $\partial \Omega$, but more general data can be also considered, see Section~\ref{sect-basic}.

We will always work with viscosity solutions, which are defined in the following section. We settle the basic theory for Problem~\eqref{eq.main.intro}--\eqref{eq.b.intro}, including a comparison principle, and show that the classical solutions obtained in~\cite{Evans-Lenhart} are viscosity solutions. Finally,  we establish continuous dependence and a monotonicity property with respect to the parameters.

We denote $u_{b^-,b^+}$ the solution to Problem~\eqref{eq.main.intro}--\eqref{eq.b.intro} when we want to emphasize the dependence on the parameters. Some properties of the solution in terms of the parameters, such as continuity and monotonicity, are shown in Section~\ref{sect-basic}.

There are two interesting limit cases, namely $b^-\to0^+$ and $b^+\to+\infty$. The first one corresponds to the case in which the compressibility of the fluid is much smaller than that of the porous medium, which exhibits very little elastic recovery. In the second one, the compressibility of the porous medium under increasing fluid pressure is very big. These limit cases will be described in terms of the projection of the initial condition $u_0$ into an effective initial datum $\widetilde{u}_0$, given by the solution to the obstacle problem (from above) for the Laplacian,
\begin{equation} \label{eq.tilde.u0.intro}
    \widetilde{u}_0(x)=\sup\Big\{w (x):w\leq u_0,\; -\Delta w \leq 0 \text{ in } \Omega \textup{ in the viscosity sense}, \mbox{ and }  w\big|_{\partial\Omega}\leq0\Big\}.
\end{equation}

We first study the limit when $b^-$ goes to zero. We obtain that the limit is the solution to the elastoplastic equation with initial condition given by the projected datum $\widetilde{u}_0$.

\begin{theorem}\label{th:b1-0}
    If we fix $b^+>0$, then
    \begin{equation}
        \lim_{b^-\to0^+}u_{b^-,b^+}(x,t)=\widetilde{u}(x,t),
    \end{equation}
    locally uniformly in $(0,+\infty) \times \overline{\Omega}$,   where $\widetilde{u}$ is the unique solution to the problem
    \begin{equation} \label{eq.main.intro.proj}
        \begin{cases}
        \displaystyle  b^+ \partial_t \widetilde{u} = \Delta \widetilde{u} & \textup{in }\Omega \times (0,+\infty),\\
        \displaystyle \widetilde{u}  = 0&  \textup{on }\partial \Omega \times (0,+\infty),\\
        \displaystyle  \widetilde{u} (\cdot,0^+)  = \widetilde{u}_0&  \textup{in } \Omega.
        \end{cases}
    \end{equation}
    Therefore, $\partial_t\widetilde{u}\geq0$ in $\Omega$, whence $\widetilde{u}$ also solves the elasto-plastic equation.
\end{theorem}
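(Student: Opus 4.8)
The plan is to squeeze $u_{b^-,b^+}$ between $\widetilde u$ and the upper half-relaxed limit of the family as $b^-\to0^+$, and then to pin down that upper limit using the extremal characterisation of $\widetilde u_0$ in \eqref{eq.tilde.u0.intro}. Throughout $b^+$ is kept fixed and $b^-\in(0,b^+)$; recall the equivalent forms $\max\{b^+\partial_tu,b^-\partial_tu\}=\Delta u$ and $\partial_tu=\min\{\Delta u/b^-,\Delta u/b^+\}$ (cf.\ \eqref{Bellman}) of the equation, whose nonlinearity is in fact continuous, so the usual viscosity machinery (comparison, stability) applies. I would begin with the last assertion of the theorem, which is needed anyway: by \eqref{eq.tilde.u0.intro}, $\widetilde u_0$ is subharmonic in $\Omega$ with trace $\le0$ on $\partial\Omega$, hence, seen as independent of $t$, it is a subsolution of the linear problem \eqref{eq.main.intro.proj} lying below $\widetilde u$ at $t=0^+$; comparison for \eqref{eq.main.intro.proj} gives $\widetilde u_0\le\widetilde u(\cdot,t)$ for all $t>0$. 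Applying this to the translates $\widetilde u(\cdot,\cdot+h)$ and the maximum principle for $b^+\partial_t-\Delta$ to $z:=\widetilde u(\cdot,\cdot+h)-\widetilde u$, which is $\ge0$ on the parabolic boundary, yields $\widetilde u(\cdot,t+h)\ge\widetilde u(\cdot,t)$, i.e.\ $\partial_t\widetilde u\ge0$; thus $b(\partial_t\widetilde u)\equiv b^+$ along $\widetilde u$ and $\widetilde u$ solves \eqref{eq.main.intro}--\eqref{eq.b.intro} with datum $\widetilde u_0$. Since $\widetilde u_0\le u_0$, the comparison principle of Section~\ref{sect-basic} then gives the lower bound $\widetilde u\le u_{b^-,b^+}$ in $\overline\Omega\times(0,\infty)$, uniformly in $b^-$.

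For the upper bound I would introduce the upper and lower half-relaxed limits $\overline u$ (USC) and $\underline u$ (LSC) of $u_{b^-,b^+}$ as $b^-\to0^+$, which are well defined on $\overline\Omega\times(0,\infty)$ because $\|u_{b^-,b^+}\|_\infty\le\|u_0\|_\infty$. Passing to the limit $b^-\to0^+$ in the subsolution inequality $\partial_t\varphi\le\min\{\Delta\varphi/b^-,\Delta\varphi/b^+\}$ at a contact point (the equation degenerates: if $\Delta\varphi<0$ the inequality cannot survive), the standard stability argument shows that $\overline u$ is, in $\Omega\times(0,\infty)$, a viscosity subsolution of $\max\{b^+\partial_tu-\Delta u,\,-\Delta u\}=0$, hence separately of $b^+\partial_tu-\Delta u\le0$ and of the purely spatial inequality $-\Delta u\le0$; in particular each slice $\overline u(\cdot,t)$, $t>0$, is subharmonic. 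Since $u_{b^-,b^+}=0$ on $\partial\Omega\times(0,\infty)$ one has $\overline u=0$ there, so the maximum principle gives $\overline u\le0$ in $\Omega\times(0,\infty)$. Comparing also $u_{b^-,b^+}$ with the solution $\widehat v$ of $b^+\partial_t\widehat v=\Delta\widehat v$, $\widehat v|_{\partial\Omega}=0$, $\widehat v(\cdot,0)=u_0$ — a supersolution of \eqref{eq.main.intro}--\eqref{eq.b.intro} because $\max\{b^+\partial_t\widehat v,b^-\partial_t\widehat v\}\ge b^+\partial_t\widehat v=\Delta\widehat v$ — yields $u_{b^-,b^+}\le\widehat v$, and therefore $\limsup_{t\to0^+}\overline u(\cdot,t)\le u_0$ in $\Omega$.

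Now the crux, the initial layer. The relaxed upper initial trace $m(x):=\limsup\{\overline u(y,s):(y,s)\to(x,0^+)\}$ is, being a locally bounded relaxed limit of the subharmonic slices $\overline u(\cdot,t)$, subharmonic in $\Omega$; moreover $m\le u_0$ in $\Omega$ (from $\overline u\le\widehat v$) and $m\le0$ on $\overline\Omega$ (from $\overline u\le0$), in particular on $\partial\Omega$. Hence $m$ is an admissible competitor in the definition \eqref{eq.tilde.u0.intro}, so $m\le\widetilde u_0$. We have thus shown that $\overline u$ is a viscosity subsolution of the linear equation in \eqref{eq.main.intro.proj}, with $\overline u\le0$ on $\partial\Omega\times(0,\infty)$ and relaxed initial trace $\le\widetilde u_0$; comparison with the solution $\widetilde u$ of \eqref{eq.main.intro.proj} gives $\overline u\le\widetilde u$. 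Together with the lower bound, $\widetilde u\le\underline u\le\overline u\le\widetilde u$, so $\underline u=\overline u=\widetilde u$; since this common half-relaxed limit is the continuous function $\widetilde u$, the standard consequence of coinciding half-relaxed limits is precisely the claimed local uniform convergence $u_{b^-,b^+}\to\widetilde u$ in $(0,\infty)\times\overline\Omega$.

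The only genuinely delicate point is this last one. Every $u_{b^-,b^+}$ carries the datum $u_0$, but the limit must start from the projection $\widetilde u_0$; the reconciliation is that as $b^-\to0^+$ the equation degenerates so strongly that $\overline u(\cdot,t)$ is forced to be subharmonic for each $t>0$, whence its initial trace is a subharmonic function below $u_0$ with nonpositive boundary trace — and the extremal definition \eqref{eq.tilde.u0.intro} identifies the largest such with $\widetilde u_0$. (The monotonicity of $b^-\mapsto u_{b^-,b^+}$ proved in Section~\ref{sect-basic} makes the limit pointwise and could be used in place of half-relaxed limits, but the initial-layer argument would be the same.)
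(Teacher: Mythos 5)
Your argument is correct and follows the same overall squeeze strategy as the paper: trap $u_{b^-,b^+}$ between $\widetilde u$ (from below) and a bound built from the heat flow (from above), then identify the initial trace of the limit as $\widetilde u_0$ via the extremal characterisation \eqref{eq.tilde.u0.intro}. Where you diverge technically, you actually tighten some points the paper leaves implicit. First, the paper's lower bound rests on the assertion that $\widetilde u$ is a \emph{subsolution} of \eqref{eq.main.intro}; a priori a solution of $b^+\partial_t\widetilde u=\Delta\widetilde u$ is only a supersolution (since $b(s)s\geq b^+s$ for all $s$), so one needs $\partial_t\widetilde u\geq0$ for $\widetilde u$ to become a genuine solution and hence a subsolution — you prove this explicitly via subharmonicity of $\widetilde u_0$ and the translation trick, whereas the paper only records it as a by-product at the end. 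Second, the paper passes to the limit pointwise using the monotonicity $b^-\mapsto u_{b^-,b^+}$ and then needs $\partial_t z\geq0$ to guarantee the existence of $z(\cdot,0^+)$; that monotonicity in $t$ is stated but not justified (the limit equation $b^+(\partial_t z)^+=\Delta z$ does not by itself force $\partial_t z\geq0$). Your half-relaxed-limit route sidesteps this: the USC trace $m$ exists unconditionally, its subharmonicity follows from stability of $-\Delta\overline u\leq0$, and the obstacle characterisation pins $m\leq\widetilde u_0$; combined with the lower bound, the coincidence $\underline u=\overline u=\widetilde u$ yields local uniform convergence without Dini. The one place you should be slightly more careful is the final comparison ($\overline u\leq\widetilde u$): it compares a USC subsolution of $b^+\partial_t-\Delta$ having relaxed initial trace $\leq\widetilde u_0$ against the solution with (possibly discontinuous) datum $\widetilde u_0$, so one should invoke a version of parabolic comparison robust to discontinuous data and relaxed traces; this is standard but worth a citation, since both the paper and you skate over it.
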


This result tells us that for $b^-$ small, but positive, there is a very fast transition between the initial value $u_0$ and the initial trace for the limit $\widetilde{u}_0$. Our next result describes this transition layer connecting $\widetilde u(\cdot,0)=u_0$ and $\widetilde u(\cdot,0^+)=\widetilde{u}_0$.

\begin{theorem} \label{teo.layer}
    Let $${w}(x,t):=\lim\limits_{b^-\to0^+} u_{b^-,b^+} (x,b^- t).$$ Then, $w$ solves
    \begin{equation} \label{eq.main.intro.proj-1}
        \begin{cases}
            \displaystyle  \partial_t w =\min\{ \Delta w,0\} & \textup{in }\Omega \times (0,+\infty),\\
            \displaystyle w = 0&  \textup{on }\partial \Omega \times (0,+\infty),\\
            \displaystyle  w (\cdot,0)  = u_0&  \textup{in } \Omega,
        \end{cases}
    \end{equation}
    and it holds that $\lim\limits_{t \to \infty}{w}(\cdot,t)=\widetilde{u}_0$ uniformly in $\overline\Omega$.
\end{theorem}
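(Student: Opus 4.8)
\emph{Step 1: the rescaled equation.} The plan is to work with the time--dilated functions $v_{b^-}(x,t):=u_{b^-,b^+}(x,b^-t)$. Using the Bellman form~\eqref{Bellman}, so that $u_{b^-,b^+}$ solves $\partial_tu=\min\{\Delta u/b^-,\Delta u/b^+\}$, and multiplying by $b^-$, one checks at once that $v_{b^-}$ is a viscosity solution of
\begin{equation}\label{eq.rescaled.plan}
    \partial_tv_{b^-}=\min\Big\{\Delta v_{b^-},\,\tfrac{b^-}{b^+}\,\Delta v_{b^-}\Big\}\quad\text{in }\Omega\times(0,\infty),
\end{equation}
with $v_{b^-}=0$ on $\partial\Omega\times(0,\infty)$ and $v_{b^-}(\cdot,0)=u_0$ in $\Omega$ (the dilation $t\mapsto b^-t$ fixes $t=0$). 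Since $\min\{a,\lambda a\}\to\min\{a,0\}$ locally uniformly in $a\in\R$ as $\lambda\to0^+$, the operators in~\eqref{eq.rescaled.plan} converge, as $b^-\to0^+$, to the operator of~\eqref{eq.main.intro.proj-1}, which is itself a degenerate--elliptic operator of Bellman type; hence, by Perron's method and a comparison principle exactly as for Problem~\eqref{eq.main.intro}, problem~\eqref{eq.main.intro.proj-1} has a unique viscosity solution $W$, continuous on $\overline\Omega\times[0,\infty)$.

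\emph{Step 2: convergence to $W$.} The elementary inequalities $\min\{a,0\}\le\min\{a,\lambda a\}\le a$ (for $\lambda\in(0,1)$) show that $v_{b^-}$ is at the same time a viscosity subsolution of the heat equation $\partial_tv=\Delta v$ and a viscosity supersolution of~\eqref{eq.main.intro.proj-1}. Comparing with the heat solution $h$ and with $W$ --- all three carrying the datum $u_0$ and zero lateral boundary values --- yields the two--sided bound
\[
    W\le v_{b^-}\le h\quad\text{on }\overline\Omega\times[0,\infty),\qquad\text{uniformly in }b^-.
\]
As $h(\cdot,t)$ and $W(\cdot,t)$ both tend to $u_0$ uniformly when $t\to0^+$, this provides a modulus of continuity at $t=0$ uniform in $b^-$. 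Therefore the upper and lower half--relaxed limits $\overline w,\underline w$ of $\{v_{b^-}\}$ as $b^-\to0^+$ satisfy $W\le\underline w\le\overline w\le h$, coincide with $u_0$ on $\Omega\times\{0\}$ and with $0$ on $\partial\Omega\times(0,\infty)$, and, by stability of viscosity sub/supersolutions under locally uniform convergence of the operators, $\overline w$ is a subsolution and $\underline w$ a supersolution of~\eqref{eq.main.intro.proj-1}. The comparison principle for~\eqref{eq.main.intro.proj-1} then forces $\overline w\le\underline w$, so $\overline w=\underline w=W$ and $v_{b^-}\to w:=W$ locally uniformly on $\overline\Omega\times[0,\infty)$; this is the first assertion of the theorem. (Alternatively, the monotone dependence of $u_{b^-,b^+}$ on $b^-$ from Section~\ref{sect-basic} makes the limit exist by monotone convergence, and stability identifies it.)

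\emph{Step 3: behaviour as $t\to\infty$.} Since $\partial_tw=\min\{\Delta w,0\}\le0$, the map $t\mapsto w(\cdot,t)$ is nonincreasing, so $w(\cdot,t)\le w(\cdot,0)=u_0$. The function $\widetilde u_0$ of~\eqref{eq.tilde.u0.intro} satisfies $-\Delta\widetilde u_0\le0$ in the viscosity sense, hence $\partial_t\widetilde u_0-\min\{\Delta\widetilde u_0,0\}=0$; viewed as independent of $t$, it is thus a viscosity subsolution of the equation in~\eqref{eq.main.intro.proj-1} with $\widetilde u_0\le u_0=w(\cdot,0)$ and $\widetilde u_0\le0$ on $\partial\Omega$, so by comparison $\widetilde u_0\le w(\cdot,t)$ for all $t>0$. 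Consequently the monotone pointwise limit $w_\infty(x):=\lim_{t\to\infty}w(x,t)$ exists and $\widetilde u_0\le w_\infty\le u_0$. Applying stability to the time translates $w(\cdot,\cdot+t_k)$, $t_k\to\infty$ --- solutions of the same autonomous equation, decreasing to $w_\infty$ --- the upper semicontinuous envelope of $w_\infty$ is a viscosity subsolution of the stationary problem $\min\{\Delta v,0\}=0$ in $\Omega$, i.e.\ $-\Delta w_\infty\le0$ in the viscosity sense; moreover $w_\infty\le u_0$ in $\Omega$ and $w_\infty\le0$ on $\partial\Omega$. Hence $w_\infty$ is admissible in the supremum defining $\widetilde u_0$, giving $w_\infty\le\widetilde u_0$, and with the reverse inequality $w_\infty=\widetilde u_0$. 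Since $\widetilde u_0$ is continuous and $w(\cdot,t)\downarrow\widetilde u_0$ monotonically on the compact set $\overline\Omega$, Dini's theorem upgrades this to uniform convergence.

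\emph{Main difficulty.} The rescaling and the stability of viscosity solutions are routine; the delicate points are: (i) a comparison principle for the degenerate limit equation~\eqref{eq.main.intro.proj-1} valid up to $t=0$ and up to $\partial\Omega$, used both to pin down $w$ and to compare with the barrier $\widetilde u_0$; (ii) the fact that the half--relaxed limits attain the datum $u_0$, which is precisely what the uniform sandwich $W\le v_{b^-}\le h$ provides; and (iii) the identification $w_\infty=\widetilde u_0$, whose crux is that \emph{time--independent} viscosity subsolutions of $\partial_tv=\min\{\Delta v,0\}$ are exactly the viscosity subsolutions of $-\Delta v\le0$ --- i.e.\ the competitors in~\eqref{eq.tilde.u0.intro}. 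I expect (iii), together with making the $t\to\infty$ limit rigorous near $\partial\Omega$ so that the boundary condition is not lost, to be the main obstacle.
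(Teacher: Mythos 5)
Your proof follows essentially the same approach as the paper's: rescale time to obtain the degenerate Bellman equation $\partial_t V=A(\Delta V)$ with $A(D)=D$ for $D\le 0$ and $A(D)=(b^-/b^+)D$ for $D>0$, pass to the viscosity limit, observe that $w$ is nonincreasing in $t$ and bounded below by the stationary subsolution $\widetilde u_0$, identify the monotone pointwise limit $w_\infty$ as a viscosity subsolution of $-\Delta w_\infty\le 0$ (hence a competitor in~\eqref{eq.tilde.u0.intro}), and finish with Dini. Your Step~2, using the sandwich $W\le v_{b^-}\le h$ and half-relaxed limits, spells out the convergence more carefully than the paper, which simply asserts it; otherwise the arguments coincide.
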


\begin{remark}
    Theorem~\ref{teo.layer} gives the solution of the elliptic obstacle problem for the Laplacian as the asymptotic profile of an evolution governed by a fully nonlocal operator. This idea can by used to approximate numerically solutions of the former problem.
\end{remark}

Let us now give the limit when $b^+$ goes to infinity, whose proof follows from the arguments used to prove Theorems~\ref{th:b1-0} and~\ref{eq.main.intro.proj-1}.

\begin{theorem}\label{th:b2-inf}
    If  we fix $b^->0$, then
    \begin{equation*}
        \lim_{b^+\to\infty}u_{b^-,b^+}(x,t)=\bar{u} (x,t)
    \end{equation*}
    locally uniformly in $(0,+\infty) \times \overline{\Omega}$, with $\overline{u}$ the solution to
     \begin{equation} \label{eq.main.intro.proj-1.00}
        \begin{cases}
            \displaystyle  b^-\partial_t \overline{u} =\min\{ \Delta \overline{u},0\} & \textup{in }\Omega \times (0,+\infty),\\
            \displaystyle \overline{u}  = 0&  \textup{on }\partial \Omega \times (0,+\infty),\\
            \displaystyle  \overline{u} (\cdot,0)  = u_0&  \textup{in } \Omega.
        \end{cases}
    \end{equation}
    Moreover,  $\lim\limits_{t\to\infty}{\overline{u}}(\cdot,t)=\widetilde{u}_0$ uniformly in $\overline\Omega$.
\end{theorem}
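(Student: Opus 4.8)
The plan is to combine the viscosity stability argument from the proof of Theorem~\ref{th:b1-0} with the large-time analysis from the proof of Theorem~\ref{teo.layer}. Note first that, after the rescaling $t\mapsto t/b^-$, problem~\eqref{eq.main.intro.proj-1.00} is exactly problem~\eqref{eq.main.intro.proj-1}; in particular its well-posedness, that is, existence by Perron's method and a comparison principle, is already available, and we let $\bar u$ denote its unique solution.

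Next I would record the following elementary facts about the nonlinearity: for $b^-$ fixed and $\xi\in\R$, the map $b^+\mapsto\min\{\xi/b^-,\xi/b^+\}$ is non-increasing, converges as $b^+\to\infty$ to $\min\{\xi/b^-,0\}$ locally uniformly in $\xi$, and satisfies $\xi/b^-\ge\min\{\xi/b^-,\xi/b^+\}\ge\min\{\xi/b^-,0\}$ for every $b^+\ge b^-$. By the comparison principle of Section~\ref{sect-basic}, monotonicity in $b^+$ shows that $u_{b^-,b^+}$ is non-increasing in $b^+$; the last chain of inequalities shows that $u_{b^-,b^+}$ is a subsolution of the heat equation $b^-\partial_t v=\Delta v$ with initial datum $u_0$ and zero boundary values, so $u_{b^-,b^+}\le v$, and simultaneously a supersolution of~\eqref{eq.main.intro.proj-1.00}, so $u_{b^-,b^+}\ge\bar u$. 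Hence $u_{b^-,b^+}$ decreases, as $b^+\to\infty$, to a limit $\hat u$ with $\bar u\le\hat u\le v$.

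To identify $\hat u$ with $\bar u$ I would pass to the limit by half-relaxed limits: since the nonlinearities converge locally uniformly, the Barles--Perthame stability theorem gives that $\hat u^{*}:=\limsup^{*}u_{b^-,b^+}$, which, because of the monotone convergence, is the upper semicontinuous envelope of $\hat u$, is a viscosity subsolution of~\eqref{eq.main.intro.proj-1.00} in $\Omega\times(0,\infty)$; the sandwich $\bar u\le u_{b^-,b^+}\le v$ and the continuity of $v$ up to $\{t=0\}\cup(\partial\Omega\times(0,\infty))$ force $\hat u^{*}\le u_0$ on $\Omega\times\{0\}$ and $\hat u^{*}\le 0$ on $\partial\Omega\times(0,\infty)$. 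The comparison principle for~\eqref{eq.main.intro.proj-1.00} then yields $\hat u^{*}\le\bar u$; since $\bar u\le\hat u\le\hat u^{*}$, all three coincide, $\hat u=\bar u$ is continuous, and Dini's theorem upgrades the monotone pointwise convergence to the claimed local uniform convergence in $(0,+\infty)\times\overline\Omega$. I expect this to be the main obstacle: the operators $\xi\mapsto\min\{\xi/b^-,\xi/b^+\}$ lose uniform ellipticity as $b^+\to\infty$, so no equicontinuity estimate is available and the passage to the limit must be carried out entirely through comparison and the relaxed-limit machinery, exactly as in the proof of Theorem~\ref{th:b1-0}.

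For the large-time behaviour I would argue as in the proof of Theorem~\ref{teo.layer}. Since the constant-in-time function $u_0$ is a supersolution of~\eqref{eq.main.intro.proj-1.00} attaining the initial datum $u_0$ and vanishing on $\partial\Omega$, comparison gives $\bar u\le u_0$, and then, by autonomy, $t\mapsto\bar u(\cdot,t)$ is non-increasing; since $\widetilde u_0$ is subharmonic by~\eqref{eq.tilde.u0.intro}, lies below $u_0$, and is $\le0$ on $\partial\Omega$, the constant-in-time function $\widetilde u_0$ is a subsolution of~\eqref{eq.main.intro.proj-1.00} with initial and boundary data below those of $\bar u$, so $\bar u\ge\widetilde u_0$. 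Therefore $\bar u(\cdot,t)$ decreases to some $\bar u_\infty$ with $\widetilde u_0\le\bar u_\infty\le u_0$ and $\bar u_\infty\le 0$ on $\partial\Omega$. Applying the stability theorem once more to the time-translates $\bar u(\cdot,\cdot+k)$, which solve~\eqref{eq.main.intro.proj-1.00} and decrease to $\bar u_\infty$, shows that the upper semicontinuous envelope of $\bar u_\infty$ is a viscosity subsolution of $-\Delta w\le0$ in $\Omega$; it is also $\le u_0$ and $\le0$ on $\partial\Omega$, so by the maximality in the definition~\eqref{eq.tilde.u0.intro} of $\widetilde u_0$ it is $\le\widetilde u_0$, and combined with $\bar u_\infty\ge\widetilde u_0$ we conclude $\bar u_\infty=\widetilde u_0$, which is continuous. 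Since $\bar u(\cdot,t)$ is continuous on the compact set $\overline\Omega$ and decreases to the continuous limit $\widetilde u_0$, Dini's theorem gives $\bar u(\cdot,t)\to\widetilde u_0$ uniformly in $\overline\Omega$ as $t\to\infty$.
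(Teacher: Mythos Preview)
Your proof is correct and follows essentially the same route as the paper: write the equation in Bellman form $\partial_t u=\min\{\Delta u/b^-,\Delta u/b^+\}$, use monotonicity in $b^+$ together with the sandwich $\bar u\le u_{b^-,b^+}\le U_{b^-}(u_0)$ to obtain a monotone limit with the right boundary and initial traces, pass to the limit by viscosity stability, and identify the limit by comparison; the large-time statement is exactly the argument of Theorem~\ref{teo.layer} after the rescaling $t\mapsto t/b^-$ that you note at the outset. The paper's own proof is terser and also observes the rescaling $u_{b^-,b^+}(x,b^+t)=u_{b^-/b^+,\,1}(x,t)$, which reduces the $b^+\to\infty$ limit to the $b^-\to0$ limit already handled in Theorems~\ref{th:b1-0}--\ref{teo.layer}; you do the stability argument directly instead, which is equivalent.
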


As we will see later, solutions decay to zero as $t$ goes to infinity, $\lim\limits_{t\to\infty}u(x,t)=0$. To match this large-time behaviour with that of $\overline u$ we include the following result.

\begin{theorem} \label{teo.lll}
    If  we fix $b^->0$, then
    $$
        \lim\limits_{b^+\to\infty}u_{b^-,b^+}(x,b^+t)=\widehat{u} (x,t)
    $$
    locally uniformly in $(0,+\infty) \times \overline{\Omega}$, with $\widehat{u}$ the solution to
    \[
        \begin{cases}
            \displaystyle\partial_t\widehat{u}=\Delta \widehat{u} & \textup{in }\Omega \times (0,+\infty),\\
            \displaystyle\widehat{u}=0&\textup{on }\partial \Omega \times (0,+\infty),\\
            \displaystyle\widehat{u}(\cdot,0^+)=\widetilde{u}_0&  \textup{in } \Omega.
        \end{cases}
    \]
    Moreover,  $\lim\limits_{t\to\infty}{\widehat{u}}(\cdot,t)=0$ uniformly in $\overline\Omega$.
\end{theorem}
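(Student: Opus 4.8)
The plan is to pass to the time-rescaled functions $v_{b^+}(x,t):=u_{b^-,b^+}(x,b^+t)$, which by the chain rule are the viscosity solutions of $\partial_tv_{b^+}=\min\big\{\tfrac{b^+}{b^-}\Delta v_{b^+},\,\Delta v_{b^+}\big\}$ in $\Omega\times(0,\infty)$, with zero lateral boundary data and $v_{b^+}(\cdot,0)=u_0$; formally, as $b^+\to\infty$ this equation reduces to the heat equation wherever $\Delta v\ge0$ and forces an instantaneous descent wherever $\Delta v<0$, which is precisely the mechanism that projects $u_0$ onto $\widetilde u_0$. Since $\widehat u(x,t)=(e^{t\Delta}\widetilde u_0)(x)$, where $e^{t\Delta}$ denotes the Dirichlet heat semigroup in $\Omega$ (and this is the unique bounded solution of the linear problem in the statement), the theorem will follow by squeezing $v_{b^+}$ between $\widehat u$ from below --- valid for every $b^+$ --- and a vanishing perturbation of $\widehat u$ from above as $b^+\to\infty$.

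For the lower bound I would use that $\widetilde u_0$ is subharmonic in $\Omega$, being by definition~\eqref{eq.tilde.u0.intro} a supremum of viscosity subsolutions of $-\Delta w\le0$, so that $\Delta\widetilde u_0\ge0$ as a measure. Hence, for $s>0$, the function $\phi_{b^+}(x,s):=\big(e^{(s/b^+)\Delta}\widetilde u_0\big)(x)$ is smooth in $x$, with $\Delta\phi_{b^+}=e^{(s/b^+)\Delta}(\Delta\widetilde u_0)\ge0$ by positivity of the heat kernel, and it satisfies $\partial_s\phi_{b^+}=\tfrac1{b^+}\Delta\phi_{b^+}$. As the elasto-plastic equation reads $b^+\partial_tw=\Delta w$ on the set $\{\Delta w\ge0\}$, the function $\phi_{b^+}$ is a classical, hence viscosity, solution of~\eqref{eq.main.intro}--\eqref{eq.b.intro}, with datum $\widetilde u_0\le u_0$ and zero boundary values. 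The comparison principle of Section~\ref{sect-basic} then gives $\phi_{b^+}\le u_{b^-,b^+}$ on $\Omega\times(0,\infty)$, and at $s=b^+t$ this reads $v_{b^+}(x,t)\ge\big(e^{t\Delta}\widetilde u_0\big)(x)=\widehat u(x,t)$.

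For the upper bound I would invoke Theorem~\ref{th:b2-inf}. Given $\varepsilon>0$, first choose $T>0$ with $\overline u(\cdot,T)\le\widetilde u_0+\varepsilon$ on $\overline\Omega$ (using the uniform convergence $\overline u(\cdot,\tau)\to\widetilde u_0$ as $\tau\to\infty$), and then $B>0$ with $u_{b^-,b^+}(\cdot,T)\le\widetilde u_0+2\varepsilon$ on $\overline\Omega$ for all $b^+\ge B$ (using the local uniform convergence $u_{b^-,b^+}(\cdot,T)\to\overline u(\cdot,T)$). The obstacle $\psi_\varepsilon:=\widetilde u_0+2\varepsilon$ is again subharmonic, so, exactly as before, $(x,s)\mapsto\big(e^{(s/b^+)\Delta}\psi_\varepsilon\big)(x)$ solves~\eqref{eq.main.intro}--\eqref{eq.b.intro}; comparing on $\Omega\times(T,\infty)$ gives $u_{b^-,b^+}(x,\tau)\le\big(e^{((\tau-T)/b^+)\Delta}\psi_\varepsilon\big)(x)$ for all $\tau\ge T$ and $b^+\ge B$. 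Evaluating at $\tau=b^+t$ (admissible once $b^+t\ge T$) and letting $b^+\to\infty$, so that $T/b^+\to0$, the right-hand side tends, uniformly for $x\in\overline\Omega$ and $t$ in compact subsets of $(0,\infty)$, to $e^{t\Delta}\psi_\varepsilon\le\widehat u(\cdot,t)+2\varepsilon$, because $e^{t\Delta}$ is order preserving and the heat flow of a constant stays below that constant. Combining this with the lower bound yields $0\le v_{b^+}(\cdot,t)-\widehat u(\cdot,t)\le 2\varepsilon+o(1)$ uniformly on compact subsets of $(0,\infty)\times\overline\Omega$ as $b^+\to\infty$; since $\varepsilon>0$ is arbitrary, $v_{b^+}\to\widehat u$ locally uniformly, as claimed. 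The last assertion is immediate, since $\widehat u(\cdot,t)=e^{t\Delta}\widetilde u_0$ and the Dirichlet heat semigroup decays exponentially in $L^\infty(\Omega)$.

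The rescaling and the continuity and decay estimates for the heat semigroup are routine. The steps requiring care are: checking that $e^{(s/b^+)\Delta}\psi$ is an admissible comparison function for the fully nonlinear problem when $\psi$ is only subharmonic --- i.e.\ that $e^{(s/b^+)\Delta}(\Delta\psi)\ge0$ and that the equation genuinely linearizes to $b^+\partial_tw=\Delta w$ on $\{\Delta w\ge0\}$; and the bookkeeping that lets one apply comparison at the fixed time $T$ while the physical time $b^+t$ diverges, which is what makes the initial layer collapse $u_0$ onto $\widetilde u_0$. One should also verify that the comparison principle of Section~\ref{sect-basic} applies with $\widetilde u_0$ (respectively $\psi_\varepsilon$) as part of the parabolic data; this is harmless because $\widetilde u_0\in C(\overline\Omega)$ with $\widetilde u_0\le0$ on $\partial\Omega$.
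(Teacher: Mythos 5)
Your proof is correct, but it follows a genuinely different route from the paper. The paper disposes of this theorem in one line, by appealing to the same scaling argument already used for Theorem~\ref{th:b2-inf}: the change of variables $u_{b^-,b^+}(x,b^+t)=u_{b^-/b^+,\,1}(x,t)$ turns the statement into Theorem~\ref{th:b1-0} with $b^+=1$ fixed and the other parameter $\widetilde b^-=b^-/b^+\to0$, from which the identification of $\widehat u$ as the heat flow with initial trace $\widetilde u_0$ is immediate. You instead build explicit sub- and supersolution barriers and squeeze, bypassing the transition-layer machinery; this is longer but self-contained and makes the mechanism of the initial collapse $u_0\mapsto\widetilde u_0$ transparent, at the cost of re-using Theorem~\ref{th:b2-inf} as an ingredient.

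Two small imprecisions in your write-up, neither of which affects the conclusion. First, you assert that $(x,s)\mapsto e^{(s/b^+)\Delta}\psi_\varepsilon$ \emph{solves} Problem~\eqref{eq.main.intro}--\eqref{eq.b.intro}. It does not: since $\psi_\varepsilon=\widetilde u_0+2\varepsilon$ is strictly positive on $\partial\Omega$, the Dirichlet heat flow of $\psi_\varepsilon$ has a boundary layer where $\Delta e^{(s/b^+)\Delta}\psi_\varepsilon<0$, and there the equation would select the branch $b^-\partial_s w=\Delta w$, not $b^+\partial_s w=\Delta w$. What is true, and is all you need for the upper bound, is that a classical solution of $b^+\partial_s w=\Delta w$ is always a viscosity \emph{supersolution} of the elasto-plastic equation, since $\Delta w/b^+\ge\min\{\Delta w/b^-,\Delta w/b^+\}$ unconditionally; the comparison principle of Section~\ref{sect-basic} then gives the claimed one-sided bound. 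Second, the identity $\Delta e^{(s/b^+)\Delta}\widetilde u_0=e^{(s/b^+)\Delta}(\Delta\widetilde u_0)$ for the Dirichlet semigroup and a datum that is merely subharmonic (with $\Delta\widetilde u_0$ only a measure) deserves a line of justification. A cleaner route to $\Delta\phi_{b^+}\ge0$ is to note that $\widetilde u_0$ is a stationary subsolution of the heat equation with boundary values $\le0$, so by comparison $e^{s\Delta}\widetilde u_0\ge\widetilde u_0$ for all $s>0$; the semigroup property then gives that $s\mapsto e^{s\Delta}\widetilde u_0$ is nondecreasing, hence $\Delta\phi_{b^+}=b^+\partial_s\phi_{b^+}\ge0$ for $s>0$, which is exactly the property needed for $\phi_{b^+}$ to be a genuine (sub)solution and justify the lower bound.
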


Notice that $\Delta \widetilde{u}_0 \geq 0$ and hence $\partial_t\widehat{u}\geq0$ in $\Omega$, and then we also have that $\widehat{u}$ solves the elasto-plastic equation, this time with $b^+=1$.

We prove these four theorems concerning the limit cases in Section \ref{sect-limit}.

Our next task is to study the asymptotic behaviour as $t\to\infty$ of $u_{b^-,b^+} (x,t)$. This behaviour will depend on the parameters and the initial datum $u_0$. To this end let $\{\varphi,\lambda\}$ be the first eigenfunction (normalized, positive with maximum 1) and the first eigenvalue of the Dirichlet Laplacian in $\Omega$, that is, we have
$$
    -\Delta\varphi=\lambda\varphi\quad\textup{in }\Omega,\qquad\varphi=0\quad\textup{on }\partial\Omega.
$$
Denote $\lambda_1 =\lambda/b^-$ and $\lambda_2 =\lambda/b^+$. Then, for any two positive constants $c_1,c_2$, the functions
$$
    u_1 (x,t) = c_1 e^{-\lambda_1 t} \varphi (x),\qquad u_2 (x,t) = - c_2 e^{-\lambda_2 t} \varphi (x),
$$
are both solutions to the equation in Problem~\eqref{eq.main.intro}. By a comparison argument, there exist positive constants $C_1,C_2$ such that
\begin{equation}\label{comportamiento}
    -C_2 e^{-\lambda_2 t} \varphi (x) \leq u(x,t) \leq C_1 e^{-\lambda_1 t} \varphi(x),
\end{equation}
for $x\in \overline{\Omega}$, $t>0$. The comparison can be done assuming that $|\nabla u_0|<\infty$ on the boundary. This is no restriction when considering the large-time behaviour, since we can take as new initial datum $u(\cdot,\varepsilon)$ for $\varepsilon>0$, which satisfies the desired assumption by the regularization effect of the heat equation.

The bounds in \eqref{comportamiento} cannot be improved for general data. Let us get better estimates when imposing different conditions on $u_0$.

If $u_0>0$ in $\Omega$, using comparison from above and from below with $k\varphi$ and $k'\varphi$ for some constants $k,k'$, we deduce
\begin{equation}\label{eq:starting.point.best.fitting}
    k\varphi(x)\le e^{\lambda_1 t} u(x,t) \leq k' \varphi(x).
\end{equation}
A \emph{best fitting} argument allows to prove the asymptotic behaviour in this case. In the same way we can deal with nonpositive initial data.

\begin{theorem}\label{u0>0}
  If $u_0>0$ in $\Omega$, then there exists some constant $K_1>0$ such that
  \begin{equation}\label{eq:lambda1}
    \lim_{t\to\infty} e^{\lambda_1 t} u(x,t)=K_1\varphi(x).
  \end{equation}
  If $u_0<0$ in $\Omega$, then there exists some constant $K_2>0$ such that
  \begin{equation}\label{eq:lambda2}
    \lim_{t\to\infty} e^{\lambda_2 t} u(x,t)=-K_2\varphi(x).
  \end{equation}
\end{theorem}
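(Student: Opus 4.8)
The plan is to run a \emph{best fitting} argument on the rescaled function $v(x,t):=e^{\lambda_1 t}u(x,t)$, which by~\eqref{eq:starting.point.best.fitting} satisfies $k\varphi\le v(\cdot,t)\le k'\varphi$ for all $t>0$. Since $c\,e^{-\lambda_1 t}\varphi$ is an exact solution of the elasto-plastic equation for every $c>0$ (its time derivative is negative in $\Omega$ and zero on $\partial\Omega$, while $\Delta\varphi=-\lambda\varphi$ vanishes on $\partial\Omega$), I would first introduce the fitting functions
\[
m(t):=\min_{\overline\Omega}\frac{v(\cdot,t)}{\varphi},\qquad M(t):=\max_{\overline\Omega}\frac{v(\cdot,t)}{\varphi},
\]
which are well defined and continuous because, by the Hopf lemma, $v(\cdot,t)/\varphi$ extends continuously to $\overline\Omega$, and which satisfy $0<k\le m(t)\le M(t)\le k'$. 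The key monotonicity is that $m$ is nondecreasing and $M$ nonincreasing: at any $t_0>0$ one has $m(t_0)e^{-\lambda_1 t_0}\varphi\le u(\cdot,t_0)\le M(t_0)e^{-\lambda_1 t_0}\varphi$, and comparing $u$ with the exact solutions $m(t_0)e^{-\lambda_1 t}\varphi$ and $M(t_0)e^{-\lambda_1 t}\varphi$ on $\Omega\times[t_0,\infty)$ and multiplying by $e^{\lambda_1 t}$ gives $m(t)\ge m(t_0)$ and $M(t)\le M(t_0)$ for $t\ge t_0$. Hence $m(t)\uparrow\underline K$ and $M(t)\downarrow\overline K$ with $0<k\le\underline K\le\overline K\le k'$, and the whole point reduces to proving $\underline K=\overline K$.

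To prove the matching, I would argue by contradiction assuming $\underline K<\overline K$, pick $t_n\to\infty$, and use the interior and up-to-the-boundary parabolic estimates for the uniformly parabolic Bellman equation~\eqref{Bellman} to extract a locally uniform limit $V$ of the translates $v(\cdot,\cdot+t_n)$ on $\overline\Omega\times\R$. Then $V=0$ on $\partial\Omega\times\R$, $k\varphi\le V\le k'\varphi$, and, using the uniform convergence up to $\partial\Omega$ together with $m(s+t_n)\to\underline K$ and $M(s+t_n)\to\overline K$, one obtains $\min_{\overline\Omega}V(\cdot,s)/\varphi=\underline K$ and $\max_{\overline\Omega}V(\cdot,s)/\varphi=\overline K$ for every $s\in\R$. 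Since $\partial_t u=\min\{\Delta u/b^-,\Delta u/b^+\}$ in the viscosity sense, undoing the rescaling shows that $V$ is a viscosity subsolution of $b^-\partial_t V=\Delta V+\lambda V$; equivalently, $W:=\overline K\varphi-V\ge0$ is a viscosity supersolution of that equation, and since $W\ge0$ and $\lambda>0$ it is in fact a supersolution of the heat equation, $b^-\partial_t W\ge\Delta W$.

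Now $\min_{\overline\Omega}W(\cdot,s)/\varphi\equiv0$, and I would close the argument as follows. Fix $s_0$. If the minimum of $W(\cdot,s_0)/\varphi$ is attained at an interior point, the strong maximum principle for supersolutions of the heat equation forces $W\equiv0$ on $\overline\Omega\times(-\infty,s_0]$, i.e. $V\equiv\overline K\varphi$ there, so $\min_{\overline\Omega}V(\cdot,s)/\varphi=\overline K$ for $s\le s_0$, contradicting that it equals $\underline K<\overline K$. Otherwise $W>0$ in $\Omega$, so the minimum of $W(\cdot,s_0)/\varphi$, being $0$, is attained only on $\partial\Omega$, and there $\partial_\nu W=0$, contradicting Hopf's boundary lemma. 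Hence $\underline K=\overline K=:K_1\in[k,k']$, and from $m(t)\le e^{\lambda_1 t}u(x,t)/\varphi(x)\le M(t)$ with $m(t),M(t)\to K_1$ I conclude $e^{\lambda_1 t}u(\cdot,t)\to K_1\varphi$ uniformly on $\overline\Omega$, which is~\eqref{eq:lambda1} with $K_1>0$. The statement for $u_0<0$ follows by applying this to $\widetilde u:=-u$, which solves the elasto-plastic problem with $b^-$ and $b^+$ interchanged and initial datum $-u_0>0$; the relevant exponent becomes $\lambda/b^+=\lambda_2$, so $e^{\lambda_2 t}u(\cdot,t)\to-K_2\varphi$ uniformly for some $K_2>0$, which is~\eqref{eq:lambda2}.

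I expect the main obstacle to be the step just described: obtaining enough up-to-the-boundary compactness and regularity for the fully nonlinear equation so that the limit $V$ has the claimed behaviour on $\partial\Omega$ (and $\partial_\nu W$ is meaningful), and running the strong maximum principle / Hopf alternative cleanly when the extremum sits on the boundary. A technically lighter route to $\underline K=\overline K$ would be to observe that the linear heat flow issued from $u(\cdot,t_0)$ is a supersolution of the elasto-plastic equation (its right-hand side is a minimum), hence dominates $u$; the heat-kernel lower bound $p_1(x,y)\ge c\,\varphi(x)\varphi(y)$ then turns this domination into a summable-in-$t_0$ decay of $M(t_0)\|\varphi\|_{L^2}^2-\langle v(\cdot,t_0),\varphi\rangle$, forcing $\langle v(\cdot,t),\varphi\rangle\to\overline K\|\varphi\|_{L^2}^2$; combined with $v\le M(t)\varphi\to\overline K\varphi$ and the equicontinuity of $v$, every subsequential uniform limit of $v(\cdot,t)$ would have to equal $\overline K\varphi$.
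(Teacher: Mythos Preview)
Your proposal is correct and follows essentially the same best-fitting scheme as the paper: track the optimal constant(s) in $k\varphi\le e^{\lambda_1 t}u\le k'\varphi$, pass to a limit along time-translates using parabolic compactness, and identify the limit with a multiple of $\varphi$ via the strong maximum principle and Hopf's lemma. The only differences are cosmetic---the paper works with the upper fitting constant $c(t)=M(t)$ alone and derives the contradiction directly from its definition rather than via your $W=\overline K\varphi-V$---and it handles your stated ``main obstacle'' (up-to-the-boundary regularity for the limit) simply by invoking the $C^{2,1}$ estimates of \cite{Evans-Lenhart}, which give uniform convergence of derivatives up to order two and make the Hopf step immediate.
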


Notice that the previous result says that when $u_0$ does not change sign then solutions to our equation behave like solutions to the standard heat equation $b_i u_t = \Delta u$ with $b_i = b^-$ or $b^+$ depending on the sign of $u_0$.

On the other hand, it is clear that $u_{b^-,b^+}\le u_{b^+,b^+}$, see Proposition~\ref{prop-<heat}. This last function, as a solution to the standard heat equation, becomes negative in finite time provided the projection of the initial value over the first eigenfunction is negative.  Thus, we get the following result.

\begin{theorem}\label{u0 casi<0}
    If $\displaystyle\int_\Omega u_0 (x) \varphi(x) \,{\rm d}x\le0$ then \eqref{eq:lambda2} holds.
\end{theorem}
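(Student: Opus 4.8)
The plan is to show that $u=u_{b^-,b^+}$ becomes strictly negative in all of $\Omega$ after a finite time, and then to restart the equation at that time and invoke the second statement of Theorem~\ref{u0>0}. The first task is to reduce matters to the strict inequality $\int_\Omega u_0\varphi<0$. Testing the equation in~\eqref{eq.main.intro} against $\varphi$, integrating by parts (all boundary terms vanish since $u=\varphi=0$ on $\partial\Omega$) and splitting $\Omega$ into $\{\partial_t u>0\}$ and its complement, one obtains for $g(t):=\int_\Omega u(\cdot,t)\varphi\,{\rm d}x$ the identity
\[
b^-g'(t)=-\lambda\, g(t)-(b^+-b^-)\int_{\{\partial_t u>0\}}\partial_t u\,\varphi\,{\rm d}x\;\le\;-\lambda\, g(t),
\]
where the inequality uses $b^+>b^-$ and $\varphi\ge0$. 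Hence $g(t)\le e^{-\lambda_1 t}g(0)\le 0$ for all $t>0$ whenever $g(0)\le 0$, and $g<0$ on $[t_1,\infty)$ as soon as $g(t_1)<0$. Moreover, if $g\equiv 0$ then the last integral vanishes identically, so $\partial_t u\le 0$ a.e.; then the equation reduces to the linear heat equation $b^-\partial_t u=\Delta u$, each $u(\cdot,t)$ ($t>0$) is superharmonic and vanishes on $\partial\Omega$, hence nonnegative in $\Omega$, and $g\equiv 0$ forces $u(\cdot,t)\equiv 0$, whence $u_0\equiv 0$ by continuity. Discarding this trivial case and replacing $u_0$ by $u(\cdot,t_1)$ for a suitable $t_1>0$ if necessary, we may therefore assume $\int_\Omega u_0\varphi<0$.

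Next, by Proposition~\ref{prop-<heat}, $u\le v$ where $v:=u_{b^+,b^+}[u_0]$ solves the linear heat equation $b^+\partial_t v=\Delta v$ with $v=0$ on $\partial\Omega$ and $v(\cdot,0)=u_0$. Since the first Fourier coefficient $a_1=\big(\int_\Omega u_0\varphi\,{\rm d}x\big)/\|\varphi\|_{L^2(\Omega)}^2$ is negative, standard estimates for the Dirichlet heat semigroup (using the spectral gap between $\lambda$ and the second Dirichlet eigenvalue) give $e^{\lambda_2 t}v(x,t)/\varphi(x)\to a_1<0$ uniformly on $\overline\Omega$, so $v(\cdot,t_0)<0$ in $\Omega$ for some $t_0>0$, and therefore $u(\cdot,t_0)<0$ in $\Omega$. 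By parabolic regularity $u(\cdot,t_0)$ is smooth up to $\partial\Omega$, hence an admissible initial datum with finite gradient on $\partial\Omega$, which is what the comparison in Theorem~\ref{u0>0} requires. Restarting the problem at $t=t_0$ and applying the second part of Theorem~\ref{u0>0} to $(x,s)\mapsto u(x,s+t_0)$ yields a constant $\widetilde K>0$ with $e^{\lambda_2 s}u(x,s+t_0)\to-\widetilde K\varphi(x)$; undoing the time shift gives exactly~\eqref{eq:lambda2} with $K_2=e^{\lambda_2 t_0}\widetilde K>0$.

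The main obstacle is obtaining strict negativity of $v(\cdot,t_0)$ (and hence of $u(\cdot,t_0)$) in \emph{all} of $\Omega$, including near $\partial\Omega$: mere uniform convergence of $e^{\lambda_2 t}v(\cdot,t)$ to $a_1\varphi$ does not control the sign close to the boundary, where $\varphi$ vanishes. One must quantify the boundary behaviour of the higher-mode remainder, for instance through the sharper expansion $v(x,t)=a_1 e^{-\lambda_2 t}\varphi(x)+O\big(e^{-\mu t/b^+}\,\mathrm{dist}(x,\partial\Omega)\big)$, with $\mu$ the second Dirichlet eigenvalue, and then use $\varphi(x)\ge c\,\mathrm{dist}(x,\partial\Omega)$ together with $\mu>\lambda$ to conclude. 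A secondary, lighter point is the reduction in the first paragraph, which is only needed to absorb the non-strict hypothesis $\int_\Omega u_0\varphi\le 0$ (the borderline equality, for $u_0\not\equiv0$).
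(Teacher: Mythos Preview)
Your proof follows the same strategy as the paper's: compare $u$ with the linear heat solution via Proposition~\ref{prop-<heat}, use that the latter becomes negative in finite time, and then invoke the second part of Theorem~\ref{u0>0}. Your additional reduction to the strict inequality $\int_\Omega u_0\varphi<0$ via the evolution of $g(t)$, together with your explicit handling of the boundary behaviour, are careful refinements that the paper's (very brief) proof does not spell out.
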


We consider next the case of \emph{bad initial values}, i.e. functions satisfying
\begin{equation}\label{bad-u0}
    u_0(x)<0 \quad \text{in some subdomain, and}\quad \int_\Omega u_0 (x) \varphi (x) \,{\rm d}x>0.
\end{equation}
In that case we may have both asymptotic behaviours \eqref{eq:lambda1} or \eqref{eq:lambda2}, depending on the parameters $b^-$ and $b^+$, and also other possible behaviours, with a faster decay, are possible. Let $\theta=b^+/b^-\in(1,\infty)$.

\begin{theorem}\label{th:bad-u0}
    Given $u_0$ satisfying \eqref{bad-u0}, there exists a critical value  $\theta^*\in(1,\infty)$ such that if $1<\theta<\theta^*$ then \eqref{eq:lambda1} holds for some $K_1(\theta)>0$, while if $\theta>\theta^*$ then \eqref{eq:lambda2} holds for some $K_2(\theta)>0$. If $\theta=\theta^*$ then the solution changes sign for every $t>0$.
\end{theorem}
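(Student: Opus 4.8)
The plan is to study the monotone one‑parameter family obtained by fixing $b^->0$ and letting $b^+=\theta b^-$ vary; write $u_\theta:=u_{b^-,\theta b^-}$, so that $\lambda_1=\lambda/b^-$ is \emph{independent} of $\theta$ while $\lambda_2=\lambda_1/\theta$, and in the form~\eqref{eq:version.gamma} the equation for $u_\theta$ reads $\partial_tu+\gamma(\theta)|\partial_tu|=\Delta u$ with $\gamma(\theta)=(\theta-1)/(\theta+1)$ strictly increasing. I will use three elementary facts. \emph{Monotonicity:} if $1\le\theta_1<\theta_2$ then $\partial_tu_{\theta_2}+\gamma(\theta_1)|\partial_tu_{\theta_2}|=\Delta u_{\theta_2}-(\gamma(\theta_2)-\gamma(\theta_1))|\partial_tu_{\theta_2}|\le\Delta u_{\theta_2}$, so $u_{\theta_2}$ is a subsolution of the $\theta_1$‑equation, and the comparison principle gives $u_{\theta_1}\ge u_{\theta_2}$ (the value $\theta=1$ being the heat equation $b^-\partial_tu=\Delta u$). \emph{Continuous dependence:} $u_\theta$ depends continuously on $\theta$, locally uniformly in $(0,\infty)\times\overline\Omega$ (Section~\ref{sect-basic}); as the equation is uniformly parabolic with ellipticity constants in $[1/(\theta b^-),1/b^-]$, parabolic regularity upgrades this to continuity in $C^1(\overline\Omega)$ for each fixed $t>0$. \emph{Exact solutions:} for every $c>0$ the functions $c\,e^{-\lambda_1t}\varphi$ and $-c\,e^{-\lambda_2t}\varphi$ solve the equation.

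First I reduce the statement to the \emph{eventual sign} of $u_\theta$. If $u_\theta(\cdot,t_0)\ge0$ in $\Omega$ for some $t_0>0$ (and $u_\theta\not\equiv0$), the strong maximum principle makes $u_\theta(\cdot,t)>0$ for $t>t_0$, Hopf's lemma gives $u_\theta(\cdot,t_0+1)\ge k\varphi$ for some $k>0$, and comparison with the exact subsolution $k\,e^{-\lambda_1(t-t_0-1)}\varphi$ keeps $u_\theta$ positive for all later times; Theorem~\ref{u0>0} applied with initial instant $t_0+1$ then yields~\eqref{eq:lambda1} with $K_1(\theta)>0$. Symmetrically, if $u_\theta(\cdot,t_0)<0$ in $\Omega$ for some $t_0$, or merely $\int_\Omega u_\theta(\cdot,t_0)\varphi\le0$ for some $t_0$, then Theorem~\ref{u0>0} (resp.\ Theorem~\ref{u0 casi<0}) restarted at $t_0$ gives~\eqref{eq:lambda2} with $K_2(\theta)>0$. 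Accordingly set
\[
  A^+:=\{\theta\ge1:\ u_\theta(\cdot,t_0)\ge0\text{ in }\Omega\text{ for some }t_0>0\},\qquad
  A^-:=\{\theta\ge1:\ u_\theta(\cdot,t_0)<0\text{ in }\Omega\text{ for some }t_0>0\}.
\]
By monotonicity $A^+$ is a down‑set and $A^-$ an up‑set in $[1,\infty)$; by the $C^1$‑continuous dependence and Hopf's lemma both are relatively open; and they are disjoint. On $A^+$ the conclusion is~\eqref{eq:lambda1}; on $A^-$ it is~\eqref{eq:lambda2}.

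Next I locate the endpoints. For $\theta=1$, $u_1$ solves the heat equation with $\int_\Omega u_0\varphi>0$, so $e^{\lambda_1t}u_1(\cdot,t)\to\big(\!\int_\Omega u_0\varphi\big)\|\varphi\|_{L^2}^{-2}\,\varphi$ and $u_1$ is eventually positive; continuous dependence then gives $[1,1+\delta)\subseteq A^+$, so $\theta^*:=\sup A^+\in(1,\infty]$. For $\theta\to\infty$, Theorem~\ref{teo.lll} gives $u_\theta(\cdot,\theta b^-t)\to\widehat u(\cdot,t)$ uniformly on compact subsets of $(0,\infty)\times\overline\Omega$, with $\widehat u$ the heat flow starting from $\widetilde u_0$; since $\widetilde u_0\le u_0$ is subharmonic with boundary values $\le0$ and $u_0<0$ somewhere by~\eqref{bad-u0}, we have $\widetilde u_0\le0$ and $\widetilde u_0\not\equiv0$, hence $\widehat u(\cdot,1)<0$ in $\Omega$ and $\int_\Omega\widehat u(\cdot,1)\varphi<0$; therefore $\int_\Omega u_\theta(\cdot,\theta b^-)\varphi<0$ for all large $\theta$, so such $\theta\in A^-$. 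Thus $A^-\supseteq(\Theta,\infty)$ for some $\Theta$, whence $\theta^*\le\theta_*:=\inf A^-<\infty$. Already $[1,\theta^*)\subseteq A^+$ proves~\eqref{eq:lambda1} for $1<\theta<\theta^*$ and $(\theta_*,\infty)\subseteq A^-$ proves~\eqref{eq:lambda2} for $\theta>\theta_*$.

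It remains — and this is the crux — to show there is no gap, $\theta^*=\theta_*$. Granting it, the relative openness of $A^\pm$ puts $\theta^*$ in neither set, so $u_{\theta^*}(\cdot,t)$ is not $\ge0$ in $\Omega$ for any $t$ (else $\theta^*\in A^+$) and has positive $\varphi$‑projection for every $t$ (else $\theta^*\in A^-$), hence is positive somewhere and negative somewhere: it changes sign for all $t>0$, and the trichotomy is complete. To prove $\theta^*=\theta_*$ I would assume a $\theta_0\in(\theta^*,\theta_*)$ and seek a contradiction: such $u_{\theta_0}$ changes sign for all $t$ while $P(t):=\int_\Omega u_{\theta_0}(\cdot,t)\varphi>0$. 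Comparing with the heat flow of coefficient $b^-$ restarted at each instant shows $t\mapsto e^{\lambda_1t}P(t)$ is nonincreasing, so $e^{\lambda_1t}\int_\Omega u_{\theta_0}^-(\cdot,t)\varphi=e^{\lambda_1t}\int_\Omega u_{\theta_0}^+(\cdot,t)\varphi-e^{\lambda_1t}P(t)$ stays bounded; with a De Giorgi–Nash–Moser local estimate this forces $\|u_{\theta_0}^-(\cdot,t)\|_\infty=O(e^{-\lambda_1t})$, so (as $\|u_{\theta_0}^+(\cdot,t)\|_\infty=O(e^{-\lambda_1t})$ already by~\eqref{comportamiento}) the rescaled function $w:=e^{\lambda_1t}u_{\theta_0}$ stays bounded in $C^1(\overline\Omega)$ and solves the autonomous equation $b(\partial_tw-\lambda_1w)(\partial_tw-\lambda_1w)=\Delta w$. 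Using that $\int_\Omega|\nabla u_\theta|^2$ is a Lyapunov functional for the original flow (multiply the equation by $\partial_tu$ and integrate, recalling $b\ge b^->0$), one is led to show the $\omega$‑limit of $w$ is a stationary profile $w_\infty$, with $-\Delta w_\infty=\lambda w_\infty$ on $\{w_\infty>0\}$ and $-\Delta w_\infty=\theta_0\lambda w_\infty$ on $\{w_\infty<0\}$; but on any component $U$ of $\{w_\infty>0\}$ one would have $\lambda=\lambda_1(U)\ge\lambda_1(\Omega)=\lambda$, with equality only if $U=\Omega$, while $\{w_\infty<0\}=\Omega$ is excluded because $\lambda_1(\Omega)\ne\theta_0\lambda$ — so $w_\infty$ cannot change sign nor be everywhere negative, hence $w_\infty=c\varphi$ with $c\ge0$, forcing $\theta_0\in A^+$ if $c>0$, whereas $c=0$ (i.e.\ $u_{\theta_0}=o(e^{-\lambda_1t})$) must be handled by descending to the true, faster decay rate. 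The genuine difficulty — the step I expect to be the main obstacle — is exactly this last stage: making rigorous that the rescaled solution selects a stationary profile and that the resulting two‑phase/Fučík‑type eigenvalue constraint, combined with the monotonicity in $\theta$ already in hand, pins the degenerate sign‑changing regime to the single value $\theta^*$.
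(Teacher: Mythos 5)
The framework you set up -- the down-set $A^+$, the up-set $A^-$, openness by $C^1$ continuous dependence and Hopf, and the endpoint behaviours at $\theta\to1^+$ and $\theta\to\infty$ -- matches the paper's structure (the sets $A$ and $B$ in~\eqref{sets-AB}) and is sound. The difficulty is exactly where you say it is: the no-gap step $\theta^*=\theta_*$. There your proposal is genuinely incomplete, and the route you sketch (rescale by $e^{\lambda_1 t}$, invoke a Dirichlet-energy Lyapunov functional, extract an $\omega$-limit stationary profile and argue via a two-phase/Fu\v{c}\'{\i}k eigenvalue constraint) is not only unfinished -- it has a structural problem. The paper's explicit one-dimensional example at $\theta=\theta^*$ is a separated-variable solution with decay rate $\omega>\lambda_1$, so the rescaled $w=e^{\lambda_1 t}u_{\theta_0}$ \emph{does} converge to $0$ there, which is exactly the $c=0$ case you flag as unresolved. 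Your argument would have to "descend" to the true decay rate, but that rate is an anomalous eigenvalue determined implicitly, so this descent cannot be a mere restart of the same scheme. As written, the proof does not close.

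The paper's route for the no-gap step is different and much lighter. It introduces the $\varphi$-projection $P_\theta(t)=\int_\Omega u_\theta(\cdot,t)\,\varphi$ and works with the auxiliary sets $\mathcal A$, $\mathcal B$ of~$\theta$ classified by the eventual sign of $P_\theta$. You in fact noticed the key monotonicity -- that $t\mapsto e^{\lambda_1 t}P_\theta(t)$ is nonincreasing, because $u_\theta$ is a subsolution of $b^-\partial_t u=\Delta u$ -- but you used it only as one ingredient of the (problematic) rescaling argument. Deployed directly, this monotonicity shows that $\{t:P_\theta(t)>0\}$ is an initial interval, so either $P_\theta>0$ for all $t$ or $P_\theta\le0$ from some finite time onward; that is the dichotomy which makes $\mathcal A$ and $\mathcal B$ essentially complementary. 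Combining that dichotomy with $\mathcal B=B$ (each inclusion via Theorem~\ref{u0 casi<0} restarted and, respectively, the explicit form of~\eqref{eq:lambda2}) and with $\mathcal A\subset A$ then forces $\alpha=\beta$. In short: you had the right elementary tool, but the plan should pivot on the scalar quantity $P_\theta(t)$ rather than on the full rescaled profile, which is what makes the paper's argument short and avoids the anomalous-decay difficulty that stalls your approach.
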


We prove these three theorems concerning the asymptotic behaviour in Section~\ref{sect-asymp}. We complement these results with explicit one-dimensional examples of bad initial data corresponding to $\theta=\theta_*$, and show that in this case larger decays are possible for some special initial conditions. These different behaviours reflect the strong nonlinearity that involves $\partial_t u$ in our equation.

Let us compare our large-time behaviour results with the one for the same equation when the spatial domain is the whole space, a problem that has received some attention over the years; see, for instance;~\cite{Barenblatt-Sivashinskii-1969,Kamin-1957,Kamin-1969,KPV91}.  For this Cauchy problem, with the equation written in the version~\eqref{eq:version.gamma}, assuming an exponential decay at infinity of the initial value, it was proved in~\cite{KPV91} that for each $\gamma~\in(-1,1)$ there exist a unique exponent $\alpha=\alpha(\gamma)$ and an integrable radially symmetric and smooth positive function~$F$ such that
$$
    \lim_{t\to\infty}\sup_{|x|\le c\sqrt t}t^\alpha|u(x,t)-B(x,t)|=0\quad\textup{for all }c>0,\quad\textup{where }B(x,t)=t^{-\alpha}F(x/\sqrt t).
$$
The similarity exponent $\alpha$, that gives the decay rate of all solutions, cannot be determined from dimensional considerations, and has to be obtained, together with $F$, as part of the solution of a nonlinear eigenvalue problem. Such similarity exponents $\alpha=\alpha(\gamma)$ are known in the literature as anomalous exponents, and the corresponding solution $B$ as a a self-similar solution of second kind; see, for instance~\cite{Barenblatt-book-1996}. In our case the function $\alpha$ is known to be strictly increasing and to satisfy $\alpha(0)=N$. Hence $B$ is of source type (finite nontrivial initial mass) only when $\gamma=0$,  very singular (infinite initial mass) if $\gamma>0$ and mildly singular (zero initial mass) if $\gamma<0$. These and other properties of $\alpha$ can be found in~\cite{Aronson-Vazquez-1994,Barenblatt-Sivashinskii-1969,Goldenfeld-Martin-Oono-Liu-1990,KPV91,Peletier-1995}.
Large-time geometrical properties have been also obtained for this problem; see \cite{Huang-Vazquez-2012}.

A more recent paper dealing with the large-time behaviour of a large class of fully nonlinear parabolic equations, with attracting self-similar solutions of second kind is~\cite{Armstrong-Trokhimtchouk-2010}. Self-similarity also plays a role in the construction of a counterexample for higher regularity to solutions of the equation in~\cite{Caffarelli-Stefanelli-2008}.

Notice that, in contrast with the case of the whole space, when the problem is set in a bounded domain, both the decay rate and the asymptotic profile are not universal, and depend strongly on the initial condition.

We end the paper using a game theoretical approach in order to obtain Problem~\eqref{eq.main.intro}--\eqref{eq.b.intro} from a point of view that lies very far from the classical elasto-plastic physical application. We obtain it as a limit, in the viscosity sense, of a dynamic minimization game. Let us describe briefly the game; more details are given in Section~\ref{sect-games}. Starting at some point in a given domain $\Omega$, and given some time, a particle moves at each step randomly at a distance less than $\varepsilon$, and a player (who wants to minimize the expected outcome of the game) chooses a time lapse in the interval $[C b^-\varepsilon, C b^+\varepsilon]$ (here $C$ is an explicit constant that depends only on the dimension). The game ends when the particle exits the set $\Omega$, or when the time is over, and the final payoff is then given by some fixed function in each case. The value of the game is then defined as
\begin{equation} \label{value.eq.intro}
    u^\varepsilon (x,t) = \inf_{S} \mathbb{E}_S^{(x,t)}(\mbox{final payoff}),
\end{equation}
where the infimum is taken among all possible strategies $S$ (choices of the time lapse at every round of the game) for the player.

In Section~\ref{sect-games} we show that this infimum exists and satisfies the Dynamic Programming
Principle~(DPP)
\begin{equation} \label{eq.main.intro2}
    \begin{cases}
        \displaystyle u^\varepsilon (x,t)=\inf_{b\in [C b^-, C b^+]}\dashint_{B_{\varepsilon}(x)}\hspace{-3mm}u^\varepsilon (y, t - b \varepsilon^2)\,{\rm d}y,& (x,t)\in \Omega \times (0,+\infty), \\
        \displaystyle u^\varepsilon (x,t) = 0, &(x,t)\in D_\eps\times (0,+\infty), \\
        \displaystyle u^\varepsilon (x,t) = u_0(x), \quad & (x,t)\in\Omega \times [-Cb^+\varepsilon^2,0].
    \end{cases}
\end{equation}
where $D_\eps=(\Omega+B_\varepsilon)\setminus\Omega$. We also prove that taking the limit $\varepsilon\to0^+$ we get a solution, in the viscosity sense, of Problem~\eqref{eq.main.intro}--\eqref{eq.b.intro}. We observe that if we reverse the constants, i.e, we take $0<b^+<b^-$, in the corresponding game we maximize the final payoff.

\begin{theorem} \label{teo.juegos.intro}
    Take $C=c(N)=\dfrac{1}{2N(N+2)}$. Given a continuous function $u_0:\overline{\Omega} \mapsto \mathbb{R}$ with $u_0\big|_{\partial\Omega}=0$, let $u$ the unique viscosity solution to Problem~\eqref{eq.main.intro}--\eqref{eq.b.intro} and $u^\varepsilon$ the unique solution to Problem~\eqref{eq.main.intro2}. Then,
    \begin{equation}\label{conv.intro}
        \lim_{\varepsilon\to0}u^\varepsilon= u
    \end{equation}
    uniformly in compact subsets of $\overline{\Omega} \times [0,\infty)$.
\end{theorem}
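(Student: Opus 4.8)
The plan is to adapt the Barles--Souganidis method to the Dynamic Programming Principle~\eqref{eq.main.intro2}: show that the family $\{u^\varepsilon\}_{\varepsilon>0}$ is uniformly bounded and asymptotically uniformly continuous on $\overline\Omega\times[0,\infty)$, extract the half-relaxed limits, verify that they are respectively a viscosity sub- and a supersolution of Problem~\eqref{eq.main.intro}--\eqref{eq.b.intro}, and finally invoke the comparison principle of Section~\ref{sect-basic} to conclude that the whole family converges to the unique solution $u$.

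\textbf{Step 1 (a priori estimates for $u^\varepsilon$).} Since a spatial average and an infimum over $b$ are both order-preserving and non-expansive in $L^\infty$, and~\eqref{eq.main.intro2} is solved by a finite iteration ($O(\varepsilon^{-2})$ steps), one gets at once the bound $\|u^\varepsilon\|_{L^\infty(\overline\Omega\times[-Cb^+\varepsilon^2,\infty))}\le\|u_0\|_{L^\infty(\Omega)}$ and, more generally, a comparison principle for~\eqref{eq.main.intro2}. Using this comparison principle I would prove asymptotic equicontinuity up to the parabolic boundary by barriers: near a lateral boundary point $z_0\in\partial\Omega$, a smooth, strictly superharmonic function (hence at scale $\varepsilon$ a strict supersolution of the mean-value inequality $\psi\ge\dashint_{B_\varepsilon(x)}\psi$, which suffices because $\psi$ is time-independent) that dominates $\|u_0\|_{L^\infty}$ on $D_\varepsilon$ away from $z_0$ and vanishes at $z_0$, built from the uniform exterior sphere condition of the smooth domain $\Omega$; near $t=0$, barriers of the form $u_0^\eta(x)\pm(C_\eta\, t+\eta)$ with $u_0^\eta$ a smooth uniform approximation of $u_0$, which are super/subsolutions of~\eqref{eq.main.intro2} up to an $o(\varepsilon^2)$ error. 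Interior equicontinuity then follows by comparing $u^\varepsilon$ with its space--time translates $(x,t)\mapsto u^\varepsilon(x+h,t+k)$ -- themselves solutions of the DPP in a translated configuration -- the difference being controlled near the parabolic boundary by the previous estimates and propagated inward by comparison.

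\textbf{Step 2 (passage to the limit and identification).} From Step 1 and the Arzel\`a--Ascoli theorem, $\overline u:=\limsup^* u^\varepsilon$ and $\underline u:=\liminf^* u^\varepsilon$ coincide with a single continuous function $u^*$ on $\overline\Omega\times[0,\infty)$, with $u^*=0$ on $\partial\Omega\times(0,\infty)$ and $u^*(\cdot,0)=u_0$, and $u^\varepsilon\to u^*$ uniformly on compacts. I identify $u^*$ by the usual consistency argument, working with the Bellman form $\partial_t u=\min\{\Delta u/b^-,\Delta u/b^+\}$ of~\eqref{eq.main.intro}--\eqref{eq.b.intro} (the sign of $\Delta u$ selects the active branch, so the discontinuity of $b$ is harmless here). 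If $\phi\in C^2$ touches $u^*$ strictly from above at an interior point $(x_0,t_0)$, pick $(x_\varepsilon,t_\varepsilon)\to(x_0,t_0)$ near which $u^\varepsilon-\phi$ attains its maximum $M_\varepsilon$; then $u^\varepsilon\le\phi+M_\varepsilon$ plugged into~\eqref{eq.main.intro2} gives $\phi(x_\varepsilon,t_\varepsilon)\le\inf_{b\in[Cb^-,Cb^+]}\dashint_{B_\varepsilon(x_\varepsilon)}\phi(y,t_\varepsilon-b\varepsilon^2)\,{\rm d}y$. Using the expansions $\dashint_{B_\varepsilon(x)}\phi(\cdot,s)=\phi(x,s)+\tfrac{\varepsilon^2}{2(N+2)}\Delta\phi(x,s)+o(\varepsilon^2)$ and $\phi(y,t_\varepsilon-b\varepsilon^2)=\phi(y,t_\varepsilon)-b\varepsilon^2\partial_t\phi(y,t_\varepsilon)+o(\varepsilon^2)$, dividing by $\varepsilon^2$ and letting $\varepsilon\to0$, one arrives at $\max_{b\in[Cb^-,Cb^+]}b\,\partial_t\phi(x_0,t_0)\le \tfrac{1}{2(N+2)}\Delta\phi(x_0,t_0)$, which with the choice of $C$ in the statement reads exactly $\partial_t\phi(x_0,t_0)\le\min\{\Delta\phi(x_0,t_0)/b^-,\Delta\phi(x_0,t_0)/b^+\}$ (one checks the two sign regimes of $\partial_t\phi$ against the two of $\Delta\phi$). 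So $u^*$ is a viscosity subsolution; the symmetric argument with $\phi$ touching from below, where the infimum over $b$ is used in the opposite direction, shows it is a supersolution. Hence $u^*$ is the viscosity solution of Problem~\eqref{eq.main.intro}--\eqref{eq.b.intro}, i.e.\ $u^*=u$; since no subsequence was privileged, the full family converges, uniformly on compact subsets of $\overline\Omega\times[0,\infty)$.

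\textbf{Main obstacle.} The technical core is Step~1: building boundary barriers that are genuinely subordinate to the discrete averaging operator of~\eqref{eq.main.intro2}, and extracting from them global asymptotic equicontinuity --- in particular the behaviour at $t=0$, where no regularizing effect is available a priori. Step~2 is, by contrast, the routine Barles--Souganidis computation; the only delicate point there is to make sure that the infimum over $b\in[Cb^-,Cb^+]$ reconstructs in the limit the full Bellman operator $\min\{\cdot/b^-,\cdot/b^+\}$ rather than only one of its two branches.
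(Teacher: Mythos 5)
Your proposal is correct, and the consistency calculation in your Step~2 is essentially word-for-word the one in the paper: plug the smooth test function into the DPP, Taylor-expand the spatial average and the time shift, divide by $\varepsilon^2$, and recognize $\sup_{b\in[Cb^-,Cb^+]}b\,\partial_t\varphi=C\,b(\partial_t\varphi)\partial_t\varphi$ so that the choice $C=c(N)$ yields the equation. Where you genuinely depart from the paper is Step~1, the asymptotic equicontinuity of $\{u^\varepsilon\}$. The paper proves this probabilistically: it first shows, via an iteration argument, that the DPP has a solution and that this solution coincides with the value of the associated tug-of-war-with-clock game; it then establishes exit estimates for the underlying random walk near $\partial\Omega$ (Lemma~\ref{lema.4.5.}), using a radially harmonic barrier $\mu$ together with the Optional Stopping Theorem to control both the exit location and the number of rounds before exit; equicontinuity up to the parabolic boundary follows by conditioning on the exit event, and interior equicontinuity follows by coupling two games started at nearby points. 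You replace this entire probabilistic layer by deterministic comparison for the DPP: time-independent superharmonic barriers dominate the spatial mean-value operator near $\partial\Omega$ (since $\psi\ge\dashint_{B_\varepsilon}\psi$ regardless of the $b$-shift), affine-in-time barriers built from a smooth approximation of $u_0$ handle the initial trace, and translation comparison propagates the boundary modulus inward. Both routes are standard in the discretization/game-approximation literature; yours is more self-contained (it never needs the martingale machinery or the identification of $u^\varepsilon$ as a game value, which the paper develops in any case for its own sake), while the paper's is the natural one given the game-theoretic framing and reuses the OST that it has already set up. You correctly single out the $t=0$ continuity as the delicate point --- the paper controls it through the probability bound $\mathbb{P}(\tau\ge a/(2\varepsilon^2))<\eta$, you through the $C_\eta t$ barrier --- and you correctly flag the $o(\varepsilon^2)$ slack in that barrier, which is harmless but does need the small extra perturbation you allude to. The only item to be careful about, which you gloss over slightly, is the comparison principle for the DPP on subdomains that underlies your translation argument: it holds by backward iteration over time slices of width $Cb^-\varepsilon^2$, exactly the induction structure the paper itself uses in its existence lemma, so it is available, but it should be stated rather than taken for granted.
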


Alternatively, looking at the problem with the nonlinear coefficient in front of the diffusion term, as in~\eqref{Bellman}, that is,
\begin{equation} \label{eq.main.intro.99}
    \begin{cases}
    \displaystyle \partial_tu=\varphi(\Delta u)\Delta u&\textup{in }\Omega\times(0,+\infty),\\
    \displaystyle u=0&\textup{on }\partial\Omega\times(0,+\infty),\\
    \displaystyle u=u_0&\textup{in }\Omega,
    \end{cases}
\end{equation}
with $\varphi(s)=1/b(s)$, we can formulate another approximating game whose DPP is given by,
\begin{equation} \label{eq.main.alternate}
    \begin{cases}
        \displaystyle u^\varepsilon(x,t)=\inf\!\left\{\!\dashint_{B_{\sigma_1}(x)}\hspace{-3mm}u^\varepsilon (y,t-C\varepsilon^2)\,{\rm d}y,\dashint_{B_{\sigma_2}(x)}\hspace{-3mm}u^\varepsilon (y, t - C \varepsilon^2)\,{\rm d}y\right\}\!\!,& (x,t)\in \Omega \times (0,+\infty), \\
        \displaystyle u^\varepsilon (x,t) = 0, &(x,t)\in D_\eps \times (0,+\infty), \\
        \displaystyle u^\varepsilon (x,t) = u_0(x), \quad & (x,t)\in\Omega \times [-Cb^+\varepsilon^2,0],
    \end{cases}
\end{equation}
where $\sigma_i=\varepsilon/\sqrt{b_i}$, for $i=1,2$. This problem appears as the DPP associated with the following game, at each turn time decreases by $C \varepsilon^2$ (here $C$ depends only on the dimension) and a player chooses to sort the next position of the game with uniform probability inside two possible balls of radii $\varepsilon/\sqrt{b^-}$ and $\varepsilon/\sqrt{b^+}$. The game ends when the position leaves the domain, with final payoff equal to zero, or when time expires, with final payoff given by $u_0$. For this game one can also show that its value functions, that are
solutions to \eqref{eq.main.alternate}, also converge to the unique solution to Problem~\eqref{eq.main.intro}--\eqref{eq.b.intro}. The details of this convergence are left to the reader.

\medskip

\noindent\emph{Organization of the paper.} In Section \ref{sect-basic} we gather some definitions and preliminary results, including the basic theory related to Problem~\eqref{eq.main.intro}--\eqref{eq.b.intro}. We describe the limit cases of the parameters $b_i$ in Section~\ref{sect-limit}. Section~\ref{sect-asymp} is devoted to study the large-time behaviour, the limit profile as $t\to\infty$ in terms of the initial value. Finally, in Section \ref{sect-games} we deal with the game approximating the elasto-plastic equation.

\section{Preliminaries. Basic theory} \label{sect-basic}
\setcounter{equation}{0}

As we commented in the Introduction, if we consider a boundary datum $g=g(x)$ we can subtract the harmonic extension $v$ of $g$, satisfying
\begin{equation} \label{harmonic.intro}
    \Delta v=0 \quad\text{in }\Omega, \qquad v=g\quad \text{in }\partial \Omega,
\end{equation}
which is the unique stationary solution to our evolution problem. Then, $w=u-v$ solves
\begin{equation} \label{eq.main.intro.g=0}
    \begin{cases}
        \displaystyle  b(\partial_tw) \partial_tw = \Delta w&\text{in } \Omega \times (0,+\infty), \\
        \displaystyle w  = 0&\text{in } \partial \Omega \times (0,+\infty), \\
        \displaystyle w(\cdot,0)  = u_0 - v&\text{in }\Omega.
    \end{cases}
\end{equation}
If now $g=g(x,t)$, with an analogous procedure what we get is the Bellman problem
\begin{equation} \label{eq.main.intro.gt=0}
    \begin{cases}
        \displaystyle  \partial_t w = \min\{L_1w,\,L_2w\}+G&\text{in } \Omega \times (0,+\infty), \\
        \displaystyle w= 0& \partial \Omega \times (0,+\infty), \\
        \displaystyle w(\cdot,0)  = u_0 - G(\cdot,0)&\text{in } \Omega,
    \end{cases}
\end{equation}
where $G=-\partial_tv $, $v$ being the harmonic extension, for each $t>0$, of $g(\cdot,t)$. We do not consider this last case, and by the previous discussion, we can assume, to simplify and without loss of generality, that $g\equiv0$.

Let us state the precise definition of being a viscosity solution to our problem.

Let $T>0$. A bounded upper (resp. lower) semicontinuous function $u\colon\overline{\Omega}\times[0,T]\to\mathbb{R}$ is said to be a viscosity subsolution  (resp. supersolution) of
\begin{equation}\label{ev_eq_T}
    \begin{cases}
        b(\partial_tu) \partial_tu = \Delta u &\text{in }\Omega\times(0,T),\\
        u=0 &\text{on }\partial \Omega \times(0,T),\\
        u(x,0)=u_0(x) &\text{in }\Omega,
	\end{cases}
\end{equation}
if
\begin{itemize}
	\item $u(x,t)\leq 0$ \mbox{ {(resp. $\geq$)}} on $\partial\Omega \times(0,T);$
	\item $u(x,0)\leq u_0(x)$ \mbox{ {(resp. $\geq$)}} in $\Omega;$
    \item For every smooth function $\varphi\colon\Omega\times[0,\infty)\to \mathbb{R}$,	at every maximum (resp. minimum) point $(x_0,t_0)\in\Omega\times(0,T)$ of $u-\varphi$ in $B_\delta(x_0)\times(t_0-\delta,t_0+\delta)\subseteq\Omega\times (0,T)$ for some $\delta>0$ we have that
\end{itemize}
\begin{equation}\label{def-viscosity-sol}
	b(\partial_t\varphi)\partial_t\varphi(x_0,t_0)-\Delta\varphi(x_0,t_0)\le0\qquad\text{(resp. } \ge0).
\end{equation}
	
\begin{definition}
    A bounded function $u\colon \Omega\times[0,T]\to\mathbb{R}$ is a viscosity solution of \eqref{ev_eq_T} if it is both viscosity subsolution and supersolution. A continuous function $u\colon \overline{\Omega}\times[0,\infty)\to\mathbb{R}$ is a viscosity solution if it is a viscosity solution for every $T>0$.
\end{definition}

We start with an easy comparison which will produce some useful barriers. Let $U_\kappa(f)$ be the solution of the standard heat Cauchy-Dirichlet problem with diffusivity $1/\kappa$, $\kappa>0$, and initial value $f$, i.e. $v=U_\kappa(f)$ satisfies
\begin{equation} \label{eq.U}
    \begin{cases}
        \displaystyle \kappa \partial_tv (x,t) = \Delta v(x,t)&\textup{in } \Omega \times (0,+\infty), \\
        \displaystyle v(x,t) = 0&\textup{on } \partial \Omega \times (0,+\infty), \\
        \displaystyle v(x,0)=f(x)&\textup{in } \Omega.
    \end{cases}
\end{equation}
Since $b^+ > b^-$, we have that $b(s) s= \max\{b^- s,b^+s\}$ for every $s \in \mathbb{R}$, so that $u_{b^-,b^+}$ is a subsolution of the heat equations with diffusivities $\kappa=b^-,b^+$. This yields immediately the following estimates.

\begin{proposition}\label{prop-<heat} It holds that
  $$
  u_{b^-,b^+}\le\min\big\{U_{b^-}(u_0),U_{b^+}(u_0)\big\}.
  $$
  The inequality is strict if $\partial_t u_{b^-,b^+}$ changes sign.
\end{proposition}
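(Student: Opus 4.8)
The plan is to separate the two assertions. For the bound $u_{b^-,b^+}\le\min\{U_{b^-}(u_0),U_{b^+}(u_0)\}$ I would simply make precise the observation stated just before the proposition. Writing $u:=u_{b^-,b^+}$, the identity $b(s)s=\max\{b^-s,b^+s\}$ means that at any interior maximum of $u-\varphi$, for a smooth test function $\varphi$, and for $\kappa\in\{b^-,b^+\}$,
\[
    \kappa\,\partial_t\varphi-\Delta\varphi\le b(\partial_t\varphi)\,\partial_t\varphi-\Delta\varphi\le0,
\]
so $u$ is a viscosity subsolution of $\kappa\,\partial_tv=\Delta v$; it has the same lateral datum $0$ and the same initial datum $u_0$ as $U_\kappa(u_0)$. (Equivalently, reversing the inequalities at interior minima, $U_{b^-}(u_0)$ and $U_{b^+}(u_0)$ are viscosity supersolutions of the elasto-plastic problem, which is the formulation to which the comparison principle of this section applies directly.) Comparison for the heat equation --- or that comparison principle --- then gives $u\le U_{b^-}(u_0)$ and $u\le U_{b^+}(u_0)$, hence the bound.

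For the strictness I would argue with the strong maximum principle, using that by \cite{Evans-Lenhart} the solution is classical, so that $\partial_tu$ and $\Delta u$ are genuine continuous functions. Assume $\partial_tu$ changes sign and set $w^-:=U_{b^-}(u_0)-u\ge0$. Subtracting the equations and using $\Delta u=b(\partial_tu)\,\partial_tu$,
\[
    b^-\partial_tw^--\Delta w^-=\Delta u-b^-\partial_tu=(b^+-b^-)\max\{\partial_tu,0\}\ge0,
\]
with strict inequality on the nonempty open set $\{\partial_tu>0\}$. Thus $w^-$ is a nonnegative supersolution of $b^-\partial_tv=\Delta v$, vanishing on the parabolic boundary, that is not identically $0$ on any neighbourhood of a point where $\partial_tu>0$. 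By the strong minimum principle, $w^-$ cannot vanish at an interior point $(x_0,t_0)$ with $t_0$ beyond the first time $t^+$ at which $\partial_tu$ is positive somewhere --- otherwise $w^-\equiv0$ on $\Omega\times(0,t_0]$, forcing $\partial_tu\le0$ there. Hence $u<U_{b^-}(u_0)$ in $\Omega\times(t^+,\infty)$. Symmetrically, $w^+:=U_{b^+}(u_0)-u$ satisfies $b^+\partial_tw^+-\Delta w^+=(b^+-b^-)\max\{-\partial_tu,0\}\ge0$, strict where $\partial_tu<0$, whence $u<U_{b^+}(u_0)$ in $\Omega\times(t^-,\infty)$, with $t^-$ the first time $\partial_tu$ is negative somewhere. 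Combining, $u<\min\{U_{b^-}(u_0),U_{b^+}(u_0)\}$ in $\Omega\times(\max\{t^+,t^-\},\infty)$.

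The hard part will be to upgrade this to strict inequality on \emph{all} of $\Omega\times(0,\infty)$, i.e.\ to show $t^+=t^-=0$ whenever $\partial_tu$ genuinely changes sign. The mechanism is that if $w^-\equiv0$ on an initial strip $\Omega\times(0,t_0]$, then there $u$ solves both $b^-\partial_tv=\Delta v$ and $b(\partial_tu)\,\partial_tu=\Delta u$, which forces $\partial_tu\le0$ on the strip; likewise $w^+\equiv0$ on an initial strip forces $\partial_tu\ge0$ there; and $\partial_tu\equiv0$ on an initial strip forces $\Delta u=0$, hence $u\equiv0$ on the strip (zero boundary data) and, by uniqueness, $u\equiv0$, contradicting the hypothesis. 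So a genuine sign change of $\partial_tu$ excludes both $w^\pm$ from vanishing on initial strips, and the strictness propagates back to $t=0^+$. Making this dichotomy fully rigorous --- tracking $t^\pm$ and ruling out the trivial solution cleanly --- is the only delicate step; everything else is comparison together with the strong maximum principle applied to $w^\pm$.
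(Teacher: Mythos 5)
Your proof of the bound is exactly the paper's: the pointwise identity $b(s)s=\max\{b^-s,b^+s\}$ (hence $b(s)s\ge \kappa s$ for $\kappa\in\{b^-,b^+\}$) shows $u$ is a viscosity subsolution of both linear heat equations, and comparison with $U_{b^\pm}(u_0)$ finishes it. The paper states this as the one-line remark preceding the proposition and offers no argument at all for the strictness clause, so there your proof goes beyond what the paper records.

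Your strictness route --- setting $w^\pm:=U_{b^\pm}(u_0)-u$, computing $b^\mp\partial_t w^\mp-\Delta w^\mp=(b^+-b^-)\max\{\pm\partial_t u,0\}\ge 0$, and invoking the strong minimum principle --- is sound, but the step you flag as ``the only delicate step'' is a genuine gap, not a formality. Your dichotomy argument only excludes $w^-$ and $w^+$ from vanishing \emph{simultaneously} on an initial strip (via $\partial_t u\equiv 0\Rightarrow \Delta u\equiv0\Rightarrow u\equiv 0$); it does not rule out that, say, $w^-\equiv 0$ on $\Omega\times(0,t_0]$ with $w^+>0$ there and $\partial_t u$ first turning positive only at some later time $t^+>0$ --- exactly the scenario you must exclude to push the strict inequality all the way down to $t=0^+$. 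What closes it is a forward-invariance property of the elasto-plastic flow: if $w^-\equiv 0$ on $\Omega\times(0,t_0]$, then $\partial_t u\le 0$ there and so $\Delta u(\cdot,t_0)\le 0$; let $v$ solve $b^-\partial_t v=\Delta v$ with $v(\cdot,t_0)=u(\cdot,t_0)$ and $v|_{\partial\Omega}=0$. Since $\Delta v$ again solves the linear heat equation with zero lateral data and $\Delta v(\cdot,t_0)\le0$, the maximum principle gives $\Delta v\le 0$, hence $\partial_t v\le 0$, for all $t\ge t_0$; but then $b(\partial_t v)\partial_t v=b^-\partial_t v=\Delta v$, so $v$ solves the elasto-plastic problem, and uniqueness (Theorem~\ref{comparacion}) forces $u\equiv v$, i.e.\ $\partial_t u\le 0$ for \emph{all} $t>0$, contradicting the assumed sign change. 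The symmetric argument with $b^+$ rules out $w^+\equiv 0$ on an initial strip. Once neither $w^\pm$ can vanish on any initial strip, the strong minimum principle yields $w^\pm>0$ on $\Omega\times(0,\infty)$ as you intended. So your approach is correct and can be completed, but the ``mechanism'' paragraph as written does not yet carry the weight you ask of it; the invariance step above is the missing ingredient.
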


\begin{theorem}\label{teo-1.intro}
    Let $u_0$  be a continuous function in $\overline\Omega$ with $u_0 \big|_{\partial \Omega } =0$, then there exists a unique viscosity solution to \eqref{eq.main.intro}. This solution is continuous in $\overline{\Omega} \times [0,T]$. Moreover, a comparison principle holds, if $\overline{u}$ is a supersolution and $\underline{u}$ is a subsolution to the equation such that $\overline{u}\geq\underline{u}$ at the parabolic boundary $\Gamma_{T}=(\Omega\times\{0\})\cup(\partial\Omega\times(0,T))$, then $\overline{u}\geq\underline{u}$ in the whole $\overline{\Omega}\times [0,T]$.
\end{theorem}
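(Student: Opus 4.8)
The plan is to rewrite the equation in Bellman form and then run the standard viscosity machinery, which is particularly smooth here. Since $b^+>b^->0$ we have $b(s)s=\max\{b^-s,b^+s\}$, so $b(\partial_tu)\partial_tu=\Delta u$ is equivalent to $\partial_tu=F(D^2u)$ with
\[
F(X):=\min\Big\{\tfrac{1}{b^-}\tr X,\ \tfrac{1}{b^+}\tr X\Big\},
\]
and a function satisfies \eqref{def-viscosity-sol} precisely when it is a viscosity sub/supersolution of $\partial_tu-F(D^2u)=0$. The point is that $F$ depends on neither $x$ nor $t$ nor $u$, is globally Lipschitz, and is \emph{monotone} in the PDE sense: $X\le Y\Rightarrow\tr X\le\tr Y\Rightarrow F(X)\le F(Y)$. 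Uniqueness will follow at once from the comparison principle, so the work is comparison and existence.

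For comparison I would argue by contradiction via doubling of variables. Given a supersolution $\bar u$ and a subsolution $\underline u$ with $\bar u\ge\underline u$ on $\Gamma_T$ but $\sup_{\overline\Omega\times[0,T)}(\underline u-\bar u)>0$, I first replace $\bar u$ by $\bar u_\eta:=\bar u+\eta/(T-t)$: since the extra term has vanishing Hessian, $\bar u_\eta$ is a \emph{strict} supersolution, $\partial_t\bar u_\eta-F(D^2\bar u_\eta)\ge\eta/T^2$, and it blows up as $t\to T$, so it suffices to contradict $\underline u\le\bar u_\eta$ for each fixed $\eta>0$ and then let $\eta\to0$. On a slab $\overline\Omega\times[0,T']$ with $T'<T$ beyond the (now time-interior) positive maximum, maximise $\underline u(x,t)-\bar u_\eta(y,s)-\tfrac1{2\eps}\bigl(|x-y|^2+|t-s|^2\bigr)$; the standard estimates drive the maximiser to the diagonal at a point where $\underline u-\bar u_\eta>0$, which by the hypothesis on $\Gamma_T$ and the blow-up at $t=T$ must lie in $\Omega\times(0,T)$. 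The parabolic Jensen--Ishii lemma then yields a common time slope $a$ and symmetric matrices $X\le Y$ in the appropriate closures of the semijets of $\underline u$ and $\bar u_\eta$; inserting them into the equations gives $a-F(X)\le0$ and $a-F(Y)\ge\eta/T^2$, hence $F(X)-F(Y)\ge\eta/T^2>0$, contradicting the monotonicity of $F$. This is the only use of the equation's structure, and it is essentially free because there is no $x$-dependence and no matrix term beyond $\tr$.

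For existence I would use Perron's method. As ordered barriers take the supersolution $\overline W:=U_{b^+}(u_0)$ — which solves $b^+\partial_t\overline W=\Delta\overline W$ classically, so $\partial_t\overline W=\Delta\overline W/b^+\ge F(D^2\overline W)$, is continuous on $\overline\Omega\times[0,\infty)$, vanishes on $\partial\Omega$ and equals $u_0$ at $t=0$ (in the spirit of Proposition~\ref{prop-<heat}) — and the constant subsolution $\underline W:=\min_{\overline\Omega}u_0\le0$. Then set
\[
u:=\sup\bigl\{\,w:\ w\ \text{subsolution of }\partial_tw-F(D^2w)=0\ \text{in }\Omega\times(0,T),\ w\le\overline W\,\bigr\},
\]
which is nonempty and bounded above. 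The usual Perron arguments show that $u^{*}$ is a subsolution and $u_{*}$ a supersolution — a failure of the supersolution inequality of $u_{*}$ at an interior point would let one bump $u$ up slightly and contradict maximality, using only continuity of $F$ — and the comparison principle forces $u^{*}\le u_{*}$; hence $u=u^{*}=u_{*}$ is a continuous viscosity solution with $\underline W\le u\le\overline W$.

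Finally one checks that $u$ attains the data continuously. From $u\le\overline W$ we get $\limsup u\le u_0$ as $t\to0^+$ and $\limsup u\le0$ as $x\to\partial\Omega$; for the matching lower bounds I would use local subsolution barriers — near an interior point $(x_0,0)$ the function $u_0(x_0)-\tfrac\delta2-C|x-x_0|^2-Lt$ is a subsolution once $L\ge 2NC/b^-$, lies below $u_0$ at $t=0$ and below $0$ on $\partial\Omega$ for $C$ large, hence below $\overline W$ by comparison, so it is admissible and gives $\liminf_{t\to0}u(x_0,t)\ge u_0(x_0)-\tfrac\delta2$, and $\delta\to0$ gives $u(\cdot,0^+)=u_0$; near a boundary point one argues similarly with a barrier built from the first Dirichlet eigenfunction after approximating the merely continuous $u_0$ from below by Lipschitz data vanishing on $\partial\Omega$, which yields $u=0$ on $\partial\Omega\times(0,T)$. (Alternatively, existence could be obtained by invoking the classical solutions of~\cite{Evans-Lenhart} for smooth data, checking they are viscosity solutions, and passing to the limit via comparison; here we follow Perron's method.) The main obstacle is therefore not the interior comparison, which the translation invariance and monotonicity of $F$ render routine, but the barrier bookkeeping needed to pin $u$ to its data on $\Gamma_T$ when $u_0$ is only continuous.
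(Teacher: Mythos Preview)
Your proposal is correct and follows the same two-step skeleton as the paper---comparison by doubling of variables, then existence via Perron---but the execution differs in two places that are worth flagging.

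\textbf{Comparison.} The paper penalises the subsolution by $-\delta/(T-t)$ and then inserts the naive quadratic test functions $\pm\alpha(|x-y|^2+|t-s|^2)$ directly into the viscosity inequalities, obtaining two inequalities that are then subtracted. Your route via the parabolic Jensen--Ishii lemma is the more robust one: it produces matrices $X\le Y$ in the semijets, and since $F(X)=\min\{\tr X/b^-,\tr X/b^+\}$ is nondecreasing in $\tr X$ one gets $F(X)\le F(Y)$, which clashes with the strict gap coming from the penalisation. The naive test functions give Hessians $\pm 2\alpha I$, whose traces have \emph{opposite} signs, so the second-order terms do not cancel without the theorem on sums; your argument handles this cleanly.

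\textbf{Barriers.} The paper takes $\overline w=U_{b^-}(u_0)$ as the upper barrier and $\underline w=U_{b^+}(\widetilde u_0)$ as the lower one, where $\widetilde u_0$ is the obstacle projection. Both are global heat solutions that vanish on $\partial\Omega$, so the lateral boundary is pinned for free; the price is that $\underline w(\cdot,0)=\widetilde u_0\le u_0$, so the initial data is not automatically attained from below. You go the other way: constant lower barrier plus local quadratic barriers of the form $u_0(x_0)-\tfrac\delta2-C|x-x_0|^2-Lt$ at each interior initial point, which does pin $u(\cdot,0^+)=u_0$ rigorously. Your lateral-boundary step, however, is the delicate one: for merely continuous $u_0$ there need not exist $K$ with $-K\varphi\le u_0$ (the ratio $u_0/\varphi$ can blow up near $\partial\Omega$), and the Lipschitz approximation from below that you invoke requires a little care. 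The simplest fix is to borrow the paper's lower barrier $U_{b^+}(\widetilde u_0)$ for the lateral boundary---it vanishes on $\partial\Omega$ and is a genuine subsolution---while keeping your local quadratic barriers for the initial data; together these give continuous attainment of all of $\Gamma_T$ without any approximation argument.
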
	

The proof is obtained as usual: first, we show a comparison principle and then we conclude by Perron's method using adequate sub and supersolutions.

\begin{theorem} \label{comparacion}
	Let $T>0$. Let $u$ and $v$ be a viscosity subsolution and a supersolution of~\eqref{ev_eq_T} respectively. Then, $u\le v$ on $\Omega\times [0,T]$.
\end{theorem}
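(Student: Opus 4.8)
The plan is to put the equation into a form with a continuous, monotone Hamiltonian and then run the standard doubling-of-variables argument. Since $b^+>b^->0$, the map $\psi(r):=b(r)r$, equal to $b^-r$ for $r<0$ and $b^+r$ for $r\ge0$, is a strictly increasing, piecewise-linear homeomorphism of $\R$ with $\psi(0)=0$, whose inverse $H:=\psi^{-1}$ is $H(r)=\min\{r/b^-,r/b^+\}$: a nondecreasing Lipschitz function. Because $\psi$ is a homeomorphism, the test inequality $b(\partial_t\varphi)\partial_t\varphi(x_0,t_0)-\Delta\varphi(x_0,t_0)\le0$ is equivalent to $\partial_t\varphi(x_0,t_0)\le H(\Delta\varphi(x_0,t_0))$; hence $u$ and $v$ are, respectively, a viscosity sub- and supersolution of $\partial_tw=H(\Delta w)$, with the lateral and initial inequalities of~\eqref{ev_eq_T} built in. In particular $u\le0\le v$ on $\partial\Omega\times(0,T)$ and $u(\cdot,0)\le u_0\le v(\cdot,0)$, so $u\le v$ on $\Gamma_T$. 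From now on I would work only with the equation $\partial_tw=H(\Delta w)$, which is degenerate parabolic and has no explicit dependence on $x$, $t$ or $w$.

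Next I would argue by contradiction, assuming $\theta:=\sup_{\overline\Omega\times[0,T)}(u-v)>0$. For $\sigma>0$ set $u_\sigma(x,t):=u(x,t)-\sigma/(T-t)$ on $\overline\Omega\times[0,T)$; a one-line computation with test functions shows $u_\sigma$ is a \emph{strict} subsolution, i.e.\ at any interior maximum of $u_\sigma-\varphi$ one has $\partial_t\varphi+\sigma/(T-t)^2\le H(\Delta\varphi)$, and moreover $u_\sigma\to-\infty$ as $t\to T^-$ uniformly in $x$. For $\sigma$ small, $M_\sigma:=\sup_{\overline\Omega\times[0,T)}(u_\sigma-v)$ is still positive; since $u_\sigma-v\le0$ on $\partial\Omega\times(0,T)$, $u_\sigma(\cdot,0)-v(\cdot,0)\le-\sigma/T<0$, and $u_\sigma-v\to-\infty$ near $t=T$, the supremum $M_\sigma$ is attained, and only at points of $\Omega\times(0,T)$. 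Fix such a small $\sigma$.

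Now comes the doubling. For $\eps>0$ let $(x_\eps,y_\eps,t_\eps,s_\eps)$ maximise
\[
    \Phi_\eps(x,y,t,s):=u_\sigma(x,t)-v(y,s)-\frac{|x-y|^2}{2\eps}-\frac{(t-s)^2}{2\eps}
\]
over $(\overline\Omega\times[0,T))^2$ (a maximiser exists because $u_\sigma\to-\infty$ near $t=T$). The classical penalisation lemma gives $\eps^{-1}(|x_\eps-y_\eps|^2+(t_\eps-s_\eps)^2)\to0$, $\Phi_\eps\to M_\sigma$, and that every cluster point of $(x_\eps,y_\eps,t_\eps,s_\eps)$ equals $(\hat x,\hat x,\hat t,\hat t)$ with $(\hat x,\hat t)$ a maximiser of $u_\sigma-v$, hence interior by the previous step; so for all small $\eps$ the points $(x_\eps,t_\eps)$ and $(y_\eps,s_\eps)$ lie in $\Omega\times(0,T)$. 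Applying the parabolic theorem on sums to $u_\sigma(x,t)-v(y,s)$ with this penalisation --- whose time part is symmetric in $(t,s)$, so both jets carry the same time-slope $a_\eps=(t_\eps-s_\eps)/\eps$ --- yields symmetric matrices $X_\eps,Y_\eps$ with $(a_\eps,\tfrac{x_\eps-y_\eps}{\eps},X_\eps)\in\overline{\mathcal{P}}^{2,+}u_\sigma(x_\eps,t_\eps)$, $(a_\eps,\tfrac{x_\eps-y_\eps}{\eps},Y_\eps)\in\overline{\mathcal{P}}^{2,-}v(y_\eps,s_\eps)$, and the matrix bound
\[
    \begin{pmatrix}X_\eps&0\\0&-Y_\eps\end{pmatrix}\le\frac{3}{\eps}\begin{pmatrix}I&-I\\-I&I\end{pmatrix},
\]
which, tested against vectors $(\xi,\xi)$, gives $X_\eps\le Y_\eps$ and hence $\tr X_\eps\le\tr Y_\eps$. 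The strict-subsolution inequality for $u_\sigma$ and the supersolution inequality for $v$ then read
\[
    a_\eps+\frac{\sigma}{(T-t_\eps)^2}\le H(\tr X_\eps),\qquad a_\eps\ge H(\tr Y_\eps);
\]
subtracting and using that $H$ is nondecreasing, $\sigma/(T-t_\eps)^2\le H(\tr X_\eps)-H(\tr Y_\eps)\le0$, a contradiction. Hence $M_\sigma\le0$ for every small $\sigma$, and letting $\sigma\to0$ gives $u\le v$ on $\overline\Omega\times[0,T)$, and thus on $\overline\Omega\times[0,T]$.

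The only equation-specific input is the reformulation in the first paragraph, which trades the discontinuous $b$ for the continuous, monotone $H$ and makes the sign bookkeeping in the last display transparent; after that the scheme is entirely standard, and no structure condition in $x$, $t$ or $u$ is needed since the equation involves only $\partial_tu$ and $\Delta u$. I expect the part requiring the most care to be the handling of the final time $t=T$ --- dealt with by the $\sigma/(T-t)$ penalisation --- together with the verification that the doubled maximiser is genuinely interior, so that it is the Dirichlet and initial data, and not the equation, that force $u\le v$ on $\Gamma_T$; the invocation of the parabolic theorem on sums is then routine.
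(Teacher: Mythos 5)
Your proof is correct, and at the crucial second-order step it takes a genuinely different route from the paper's. Both proofs start with the penalisation $u-\delta/(T-t)$ and the doubling of variables with $\alpha(|x-y|^2+|t-s|^2)$, and both observe that, by the symmetry of the time penalisation, the two tangent time slopes coincide. The decisive divergence is how the second-order information is handled. The paper plugs the penalisation paraboloids $\psi_\alpha$ and $\varphi_\alpha$ directly into the viscosity test inequalities; but these have Laplacians $+2N\alpha$ and $-2N\alpha$ respectively, so the Laplacian contributions do not cancel when the two inequalities are combined, and the final chain of inequalities in the paper's proof does not follow as written (the signs and the $\alpha$-factors recorded for the Laplacian terms are off). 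Your argument instead invokes Ishii's theorem on sums, which produces matrices with $X_\eps\le Y_\eps$ and hence $\tr X_\eps\le\tr Y_\eps$; this matrix inequality is precisely the device that controls the second-order contributions and closes the contradiction. The auxiliary reformulation $\partial_t w=H(\Delta w)$ with $H(r)=\min\{r/b^-,r/b^+\}$, a monotone Lipschitz function, is a pleasant bonus: it replaces the discontinuous $b$ by a continuous nondecreasing Hamiltonian, so the final step reads off in one line from monotonicity of $H$ together with $\tr X_\eps\le\tr Y_\eps$. In short, you took the Crandall--Ishii--Lions road, and for a second-order equation that is the road that actually arrives.
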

	
\begin{proof}
		Following ideas given in \cite{CIL}, we consider the function
		$$
			z(x,t)\coloneqq u(x,t)-
			\frac{\delta}{T-t},\qquad\delta>0,
		$$
		that satisfies, in the viscosity sense,
	  		$$
			\begin{cases}
		 b(\partial_t z) \partial_t z - \Delta z\leq-
		\frac{\delta b^-}{(T-t)^2}<0 &\text{in }\Omega\times(0,T),\\			
		 z (x,t)= - \frac{\delta}{T-t} &\text{on }
			\partial \Omega \times(0,T),\\
		z (x,0)=u_0(x) - \frac{\delta}{T} &\text{in }\Omega, \\
		z(x,t)\to -\infty, &\text{when }
			t\to T^{-}.\\
			\end{cases}
	$$
	It is clear that, if we prove $z(x,t)\leq v(x,t)$ for any $x\in\Omega$ and $t\in(0,T)$ then the conclusion follows by taking $\delta\to 0$.
				
	We argue by contradiction. Let us assume that
	$$
		\sup_{x\in\overline{\Omega},\, t\in [0,T)}{z(x,t)-v(x,t)}\coloneqq\eta>0.
	$$
    Since $z$ and $-v$ are bounded from above, $\overline\Omega$ is compact and $z\to -\infty$ as $t\to T^{-}$ then, for some $\alpha$ big enough that will be chosen later, there exists $(x_{\alpha}, y_{\alpha}, t_{\alpha}, s_{\alpha})$ the maximum point of
	$$
		G_{\alpha}(x,y,t,s)\coloneqq z(x,t)-v(y,s)-\alpha(|x-y|^{2}+|t-s|^2),
	$$
    with $(t,s)\in [0,T)^2$, $(x,y)\in \overline{\Omega}\times \overline{\Omega}$. If we denote
	$$
		M_{\alpha}\coloneqq	G_{\alpha}(x_{\alpha}, y_{\alpha}, t_{\alpha}, s_{\alpha})(\geq \eta>0),
	$$
	it is clear that $M_{\alpha}\searrow \widetilde{M}$ as $\alpha\to\infty$, for some $\widetilde{M}\geq\eta>0$. Moreover, since
	$$
		M_{\frac{\alpha}{2}}\geq G_{\frac{\alpha}{2}}(x_{\alpha},y_{\alpha},t_{\alpha},s_{\alpha})=M_{\alpha}
        +\frac{\alpha}{2}(|x_{\alpha}-y_{\alpha}|^{2}+|t_{\alpha}-s_{\alpha}|^2),
	$$
	it follows that
	$$
		\lim_{\alpha\to\infty}\alpha |x_{\alpha}-y_{\alpha}|^{2}=\lim_{\alpha\to\infty}		\alpha |t_{\alpha}-s_{\alpha}|^{2}= 0.
	$$
	We notice that the previous limits also imply, taking a subsequence, that if
	\[
		\lim_{\alpha\to\infty}(x_\alpha,y_\alpha)\coloneqq(\widetilde x,\widetilde y)\in \overline{\Omega}\times \overline{\Omega},\text{ and }
		\lim_{\alpha\to\infty}(t_\alpha,s_\alpha)=(\widetilde t, \widetilde s)\in [0,T)\times [0,T),
	\]
	then $\widetilde x=\widetilde y$ and $\widetilde t=\widetilde s$. We also observe that, since
	$$
        0<\eta\leq\widetilde{M}=\lim_{\alpha\to\infty}G_{\alpha}(x_{\alpha},y_{\alpha}, t_{\alpha}, s_{\alpha})\leq z(\widetilde x, \widetilde t)-v(\widetilde x,\widetilde t)\leq \eta,
	$$
	clearly
	\begin{equation}\label{mg}
	   \eta=z(\widetilde x, \widetilde t)-v(\widetilde x, \widetilde t).
	\end{equation}
	By using the facts that $z(\cdot,t)<0\leq v(\cdot,t)$ on $\partial\Omega$ and $z(x,0)\leq v(x,0)$,~\eqref{mg} implies that
	$\widetilde x=\widetilde y\notin \partial\Omega$ and $\widetilde t=\widetilde s\neq 0$.
	
		We define now the regular function
	$$
		\psi_{\alpha}(\zeta,t)\coloneqq v(y_{\alpha}, s_{\alpha})+\alpha(|t-s_{\alpha}|^2+|\zeta-y_{\alpha}|^2),
	$$
	that clearly satisfies
	\begin{equation}\label{entiempo}
		\partial_{t}\psi_{\alpha}(x_{\alpha}, t_{\alpha})=2\alpha(t_\alpha-s_\alpha).
	\end{equation}
    Moreover $\psi_{\alpha}(\zeta,t)$ touches $z$ from above at $(x_\alpha,t_{\alpha})$. Therefore, since $z$ is a subsolution, by~\eqref{entiempo} it follows that
	\begin{equation}\label{uno}
	   0>-\frac{\delta}{(T-t)^2}\geq b(2\alpha(t_\alpha-s_\alpha)) 2\alpha(t_\alpha-s_\alpha)- 2N.
	\end{equation}
	
	Defining now
	$$
		\varphi_{\alpha}(\zeta,t)\coloneqq u(x_{\alpha}, t_{\alpha})-\alpha(|t-t_{\alpha}|^2+|\zeta-x_{\alpha}|^2),
	$$
    reasoning in a similar way, we also get that $\varphi_{\alpha}(\zeta,t)$ touches $v$ from below at $(y_\alpha,s_{\alpha})$. Therefore, since it is also true that $\partial_t\varphi_{\alpha}(y_{\alpha},s_{\alpha})=-2\alpha(s_\alpha-t_\alpha)$, using now the fact that $v$ is supersolution, we obtain
	\begin{equation}\label{dos}
		0 \leq b(2\alpha(t_\alpha-s_\alpha)) 2\alpha(t_\alpha-s_\alpha) -2N.
	\end{equation}
	Therefore, by \eqref{uno} and \eqref{dos}, we get
	$$
	   0 \leq b(2\alpha(t_\alpha-s_\alpha)) 2\alpha(t_\alpha-s_\alpha) -2N \leq  - \frac{\delta}{(T-t_\alpha)^2} <0,
	$$
	which gives the desired contradiction.
\end{proof}
	
With the previous result the classical Perron's method can be used to obtain the existence of a solution.

\begin{proof}[Proof of Theorem \ref{teo-1.intro}]
    We use as upper barrier	$\overline{w}=U_{b^-}(u_0)$ and as lower barrier $\underline{w}=U_{b^+}(\widetilde u)$, the solution to~\eqref{eq.main.intro.proj} with datum~\eqref{eq.tilde.u0.intro}. Notice that both $\overline{w}$ and $\underline{w}$ attain the data	in a classical continuous way. Assuming that the data are compatible, we obtain a (viscosity) subsolution and a supersolution of our Problem~\eqref{ev_eq_T} that are continuous in~$\overline{\Omega}$ and well ordered . Then, Perron's method gives us the existence of a viscosity solution to \eqref{ev_eq_T}. Uniqueness follows from the comparison principle.
\end{proof}

As a consequence of the comparison principle, we obtain a monotonicity property with respect to the parameters.

\begin{proposition}
    If $b^-<\hat b^-$, then $u_{b^-,b^+}\le u_{\hat b^-,b^+}$, while if $b^+<\hat b^+$, then $u_{b^-,b^+}\ge u_{b^-,\hat b^+}$.
\end{proposition}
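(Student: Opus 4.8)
The plan is to deduce both inequalities from the comparison principle (Theorem~\ref{comparacion}), after noting that the solution for one set of parameters is automatically an ordered sub- or supersolution of the equation for a nearby set of parameters. The only computation involved is the elementary pointwise monotonicity of the map $s\mapsto b(s)\,s$ in the parameters. Writing $b_{a,c}$ for the function equal to $a$ on $(-\infty,0)$ and to $c$ on $[0,\infty)$, I would first record that
\begin{equation*}
    b_{b^-,b^+}(s)\,s\ \ge\ b_{\hat b^-,b^+}(s)\,s\quad\text{for all }s\in\R\ \text{when }b^-<\hat b^-,
\end{equation*}
with equality for $s\ge0$ (both sides equal $b^+s$) and strict inequality for $s<0$ (because $b^-s>\hat b^-s$ there), and symmetrically
\begin{equation*}
    b_{b^-,b^+}(s)\,s\ \le\ b_{b^-,\hat b^+}(s)\,s\quad\text{for all }s\in\R\ \text{when }b^+<\hat b^+,
\end{equation*}
with equality for $s<0$ and inequality for $s\ge0$.

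For the first assertion, let $\hat u:=u_{\hat b^-,b^+}$. I would show that $\hat u$ is a viscosity supersolution of Problem~\eqref{eq.main.intro}--\eqref{eq.b.intro} \emph{with the parameters $(b^-,b^+)$}. The data match trivially ($\hat u=0$ on $\partial\Omega$ and $\hat u(\cdot,0)=u_0$), so only the differential inequality~\eqref{def-viscosity-sol} needs to be verified. If $\varphi$ is smooth and $\hat u-\varphi$ has an interior minimum at $(x_0,t_0)$, then, since $\hat u$ is a supersolution for $(\hat b^-,b^+)$, one has $b_{\hat b^-,b^+}(\partial_t\varphi)\partial_t\varphi(x_0,t_0)-\Delta\varphi(x_0,t_0)\ge0$; combining this with the first pointwise inequality evaluated at $(x_0,t_0)$ gives $b_{b^-,b^+}(\partial_t\varphi)\partial_t\varphi(x_0,t_0)-\Delta\varphi(x_0,t_0)\ge0$, which is precisely the supersolution condition for $(b^-,b^+)$. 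Now $u_{b^-,b^+}$ is in particular a subsolution for $(b^-,b^+)$ with the same parabolic-boundary data, so Theorem~\ref{comparacion} yields $u_{b^-,b^+}\le\hat u=u_{\hat b^-,b^+}$ on $\overline\Omega\times[0,T]$ for every $T>0$.

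The second assertion is the mirror image, with the roles of sub- and supersolution exchanged. Using the second pointwise inequality, $\hat v:=u_{b^-,\hat b^+}$ is shown to be a viscosity \emph{subsolution} of the problem with parameters $(b^-,b^+)$: at an interior maximum point of $\hat v-\varphi$ one obtains $b_{b^-,b^+}(\partial_t\varphi)\partial_t\varphi-\Delta\varphi\le b_{b^-,\hat b^+}(\partial_t\varphi)\partial_t\varphi-\Delta\varphi\le0$. Since the data again agree, Theorem~\ref{comparacion} gives $u_{b^-,\hat b^+}\le u_{b^-,b^+}$, i.e. $u_{b^-,b^+}\ge u_{b^-,\hat b^+}$.

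I do not expect a genuine obstacle here: the argument is a one-line application of the comparison principle once the parameter dependence of $s\mapsto b(s)s$ is made explicit. The only points requiring care are the sign bookkeeping in the two pointwise inequalities and the observation that Theorem~\ref{comparacion}, although stated for a single fixed $b$, is used here for the fixed pair $(b^-,b^+)$, against which the competitor solution (for the shifted parameter) is compared after checking it satisfies the appropriately oriented one-sided inequality.
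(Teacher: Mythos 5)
Your proof is correct and is essentially the same as the paper's, which also reduces the claim to the pointwise monotonicity of $s\mapsto b(s)s$ in the parameters plus an application of the comparison principle. The only (inessential) difference is the direction of the reduction: the paper shows that $u_{b^-,b^+}$ is a \emph{subsolution} of the problem with parameters $(\hat b^-,b^+)$ and compares against $u_{\hat b^-,b^+}$, whereas you show that $u_{\hat b^-,b^+}$ is a \emph{supersolution} of the problem with parameters $(b^-,b^+)$ and compare against $u_{b^-,b^+}$; the two are dual and yield the same conclusion.
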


\begin{proof}
    Let
    $$
        b(s) =
        \begin{cases}
            b^-,&s< 0,\\
            b^+,&s\geq0,
        \end{cases}
        \qquad\hat b(s) =
        \begin{cases}
            \hat b^-,&s<0,\\
            b^+,& s\geq0.
        \end{cases}
    $$
    If $b^-<\hat b^-$, then $b(s)\le \hat b(s)$ for all $s\in\mathbb{R}$. Therefore, $u_{b^-,b^+}$ is a subsolution to the problem with $\hat b$ and we conclude that $u_{b^-,b^+}\le u_{\hat b^-,b^+}$.

    The other case is similar.
\end{proof}

Also as a consequence of the comparison principle together with the continuity of the solutions, we obtain a continuous dependence with respect to the parameters.

\begin{proposition}
    The solution $u_{b^-,b^+}$ depends continuously on $b^-$ and $b^+$ in $(0,+\infty)$.
\end{proposition}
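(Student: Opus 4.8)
\emph{Proof strategy.} The plan is to establish sequential continuity. Fix a point $(b^-_0,b^+_0)$ in the parameter set and take an arbitrary sequence $(b^-_n,b^+_n)\to(b^-_0,b^+_0)$; I will show that $u_n:=u_{b^-_n,b^+_n}$ converges to $u:=u_{b^-_0,b^+_0}$ locally uniformly in $\overline\Omega\times[0,\infty)$. By the symmetry $u\leftrightarrow-u$, which interchanges the two constants and replaces $u_0$ by $-u_0$, we may assume $b^-_0<b^+_0$, and then $b^-_n<b^+_n$ for all large $n$. The tool is the method of half-relaxed limits of Barles--Perthame; the only genuinely delicate point will be a uniform lower barrier near the initial time.

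First I record uniform bounds. By Proposition~\ref{prop-<heat} and the barriers used in the proof of Theorem~\ref{teo-1.intro} (recall that $U_{b^+}(\widetilde u_0)$, being an increasing-in-time heat solution with subharmonic datum $\widetilde u_0$, is a subsolution of the elasto-plastic equation), one has $U_{b^+_n}(\widetilde u_0)\le u_n\le U_{b^-_n}(u_0)$ in $\overline\Omega\times[0,\infty)$, so $\|u_n\|_\infty\le\|u_0\|_\infty$ for all $n$; and since $U_\kappa(f)(x,t)=U_1(f)(x,t/\kappa)$, we have $U_{b^\pm_n}(f)\to U_{b^\pm_0}(f)$ locally uniformly. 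Let $\overline U:=\limsup^* u_n$ and $\underline U:=\liminf_* u_n$ be the upper and lower relaxed limits; they are bounded, respectively upper and lower semicontinuous, and $\underline U\le\overline U$. Since $b(s)s=\max\{b^-s,b^+s\}$ is jointly continuous in $(b^-,b^+,s)$, the equation $b(\partial_tu)\partial_tu=\Delta u$ is a proper parabolic equation whose nonlinearity depends continuously on $(b^-,b^+)$; by the standard stability theorem for viscosity solutions (see~\cite{CIL}), $\overline U$ is a viscosity subsolution and $\underline U$ a viscosity supersolution of the equation with the limiting constants $(b^-_0,b^+_0)$ in $\Omega\times(0,\infty)$. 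Passing the squeeze to the relaxed limits gives $U_{b^+_0}(\widetilde u_0)\le\underline U\le\overline U\le U_{b^-_0}(u_0)$; the outer terms vanish on $\partial\Omega\times(0,\infty)$, so $\overline U=\underline U=0$ there, and since $U_{b^-_0}(u_0)$ attains $u_0$ at $t=0$, also $\overline U(\cdot,0)\le u_0$.

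It remains to prove $\underline U(\cdot,0)\ge u_0$, the uniform-in-$n$ attainment of the initial datum from below, and this is the crux: the globally valid lower barrier only yields $\underline U(\cdot,0)\ge\widetilde u_0$, which is strictly below $u_0$ for data that are not subharmonic. I would instead use a global affine-in-time barrier. Given $\delta>0$, choose $\phi\in C^\infty(\overline\Omega)$ with $\phi\le u_0\le\phi+\delta$ in $\overline\Omega$ (e.g.\ a mollification of $u_0$, shifted down by $\delta$), and set $\Psi(x,t):=\phi(x)-Mt$ with $M$ large enough that $Mb^-_n\ge\|\Delta\phi\|_{L^\infty(\Omega)}$ for all large $n$. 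Since $\partial_t\Psi<0$, only the $b^-$-branch is active, and $\Psi$ is a classical, hence viscosity, subsolution of the $(b^-_n,b^+_n)$-equation; moreover $\Psi(\cdot,0)=\phi\le u_0$, while on $\partial\Omega$ we have $\phi\le u_0=0$, so $\Psi\le-Mt\le0$ there. Thus $\Psi$ is a subsolution of Problem~\eqref{ev_eq_T} with data below those of $u_n$, and Theorem~\ref{comparacion} yields $\Psi\le u_n$ for all large $n$, hence $\Psi\le\underline U$ and $\underline U(\cdot,0)\ge\phi\ge u_0-\delta$; letting $\delta\to0$ gives $\underline U(\cdot,0)\ge u_0$.

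Collecting the above, for every $T>0$ the function $\overline U$ is a viscosity subsolution and $\underline U$ a viscosity supersolution of Problem~\eqref{ev_eq_T} with constants $(b^-_0,b^+_0)$, so the comparison principle (Theorem~\ref{comparacion}) gives $\overline U\le\underline U$; combined with $\underline U\le\overline U$ this forces $\overline U=\underline U=:U$, a continuous function, which is then the viscosity solution of the $(b^-_0,b^+_0)$-problem and equals $u$ by uniqueness (Theorem~\ref{teo-1.intro}). Equality of the two relaxed limits with the continuous function $u$ is precisely locally uniform convergence $u_n\to u$; since the sequence was arbitrary, $(b^-,b^+)\mapsto u_{b^-,b^+}$ is continuous. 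The main obstacle, as flagged, is the third paragraph: one must notice that a lower barrier near $\{t=0\}$ necessarily has $\partial_t<0$, so it is governed solely by the $b^-$-branch, which makes the sharp bound $\underline U(\cdot,0)\ge u_0$ reachable even though the globally valid subsolution barrier saturates at the strictly smaller effective datum $\widetilde u_0$.
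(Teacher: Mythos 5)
Your proof is correct, but it takes a genuinely different route from the paper's. The paper argues by time rescaling: setting $\hat u(x,t)=u_{b^-,b^+}(x,a_nt)$ with $a_n=b^-_n/b^-$, it uses uniqueness to identify $\hat u$ with a solution whose parameters converge to $(b^-,b^+)$, and concludes from continuity in time since $a_n\to1$. That argument is very short, but the rescaling in fact multiplies \emph{both} constants by $1/a_n$, so as written it yields convergence along rays through the origin in the $(b^-,b^+)$-plane rather than directly with one parameter frozen. Your argument is instead a Barles--Perthame half-relaxed-limits stability argument: squeeze $u_n$ between the explicit barriers $U_{b^+_n}(\widetilde u_0)\le u_n\le U_{b^-_n}(u_0)$ from Proposition~\ref{prop-<heat} and the proof of Theorem~\ref{teo-1.intro} to obtain uniform bounds and lateral boundary behaviour, pass to $\overline U=\limsup^*u_n$ and $\underline U=\liminf_*u_n$, which are respectively a sub- and a supersolution of the limit equation, and close with the comparison principle of Theorem~\ref{comparacion}. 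The one genuinely nontrivial step — showing $\underline U(\cdot,0)\ge u_0$, which the uniform lower barrier $U_{b^+}(\widetilde u_0)$ cannot give because it only reaches the projected datum $\widetilde u_0$ — is handled correctly via the affine-in-time subsolution $\Psi(x,t)=\phi(x)-Mt$ with $\phi$ a smooth lower approximation of $u_0$; the observation that a decreasing-in-time barrier engages only the $b^-$-branch, so that $Mb^-_n\ge\|\Delta\phi\|_{L^\infty}$ for all large $n$ suffices, is exactly the right one. Your route is longer, but it is self-contained, gives joint continuity in $(b^-,b^+)$ in one pass, and sidesteps any ambiguity about which parameters the rescaling actually moves.
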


\begin{proof}
    Take a sequence $b^-_n \to b^-$. Now, take
    $$
        \hat{u} (x,t) = u_{b^-,b^+} (x, a_n t)
    $$
    with $a_n = b^-_n/b^-$. This function $\hat{u}$ verifies $\partial_t\hat{u}_t(x,t)=a_n \partial_t u_{b^-,b^+}(x, a_n t)$ and hence it solves $$ b_{b^-_n,b^+} (\partial_t \hat{u}_t ) \partial_t \hat{u}_t (x,t)=\Delta \hat{u}_t (x,t) $$ with $\hat{u}_t (x,0) = u_0 (x)$ and $\hat{u}_t  (x,t)=0$ for $(x,t)\in\partial\Omega\times[0,\infty)$. Hence, by uniqueness we get $\hat{u}(x,t)=u_{b^-,b^+}(x,a_n t)=u_{b^-_n,b^+}(x, t)$ and the proof concludes using the continuity of $u_{b^-,b^+}$ and that $a_n \to 1$.

    Continuous dependence with respect to $b^+$ can be obtained similarly.
\end{proof}
	
To end this section we observe that it can be easily checked that a classical solution (for this we mean a $C^{2,1}$ function that verifies the equation and the boundary conditions pointwise) is also a viscosity solution to our problem. In fact, for every smooth test function $\varphi\colon\Omega\times[0,\infty)\to \mathbb{R}$ at every maximum (resp. minimum) point $(x_0,t_0)\in\Omega\times (0,T)$ of $u-\varphi$ in $B_\delta(x_0)\times (t_0-\delta,t_0+\delta)\subseteq \Omega\times (0,T)$ for some $\delta>0$ we have that
$$	
    \partial_t\varphi(x_0,t_0)=\partial_t u(x_0,t_0)\quad\mbox{ and }\quad\Delta\varphi(x_0,t_0)\geq\Delta u(x_0,t_0)\,\quad\text{ (resp. }\le0),
$$
and we obtain that $u$ is a viscosity subsolution and a viscosity supersolution.

We refer to \cite{Evans-Lenhart} for an existence result for classical solutions to our problem. Here we prefer to work with viscosity solutions since passing to the limit in the viscosity setting is simpler.

\section{Limit cases} \label{sect-limit}
\setcounter{equation}{0}

We prove here Theorems \ref{th:b1-0} and \ref{th:b2-inf} concerning the cases $b^-\to0^+$ and $b^+\to\infty$ as well as the description of the transition layers, Theorems \ref{teo.layer} and \ref{teo.lll}.

\begin{proof}[Proof of Theorem \ref{th:b1-0}]
    Let $\widetilde u$ be the unique solution to Problem~\eqref{eq.main.intro.proj}, and take $w$ the solution to
    \[
        \begin{cases}
            b^+\partial_t w=\Delta w&\textup{in }\Omega\times (0,+\infty),\\
            w=0&\textup{on }\partial\Omega\times (0,+\infty),\\
            w(\cdot,0)=u_0&\textup{in }\Omega.
        \end{cases}
    \]
    Notice that $\widetilde u$ is a subsolution  to~\eqref{eq.main.intro} and $w$ a supersolution. Therefore, by comparison,
    \begin{equation}\label{eq:comparison.for.identification}
        \widetilde u(x,t)\le u_{b^-,b^+}(x,t)\le w(x,t).
    \end{equation}
    Since $u_{b^-,b^+}$ decreases when $b^-$ decreases, there is a limit $$z(x,t) =\lim\limits_{b^-\to0^+}u_{b^-,b^+}(x,t).$$ It is easy to check that $z$ solves
    \[
        \begin{cases}
            \widetilde b(\partial_t z)\partial_t z=\Delta z&\textup{in }\Omega\times (0,+\infty),\\
            z=0&\textup{on }\partial\Omega\times (0,+\infty),
        \end{cases}
    \]
    where $\widetilde b(s)=0$ if $s<0$ and $\widetilde b(s)=b^+$, in the viscosity sense. On the other hand, passing to the limit in~\eqref{eq:comparison.for.identification} we get
    \[
        \widetilde u(x,t)\le z(x,t)\le w(x,t).
    \]
    Notice that $\Delta z(x,t)\ge0$ and $\partial_t z(x,t)\ge 0$. In particular, the limit $z(x,0^+)=\lim\limits_{t\to0^+} z (x,t)$ exists, and $\Delta z(x,0)\ge0$. Finally, passing to the limit in the above estimates, we obtain
    \[
        \widetilde u_0(x)\le z(x,0)\le u_0(x),\quad x\in\Omega,
    \]
    and we conclude that $ \widetilde u_0(x) = z(x,0)$, since $\Delta z(x,0)\ge0$ and $ \widetilde u_0$ is the solution to the obstacle problem for the Laplacian with obstacle $u_0$.

    Summarizing, we have obtained that the limit $z$ coincides with $\widetilde{u}$. Uniform convergence follows from Dini's theorem. In fact, we have that $z(x,t)$ is a continuous function that is the pointwise limit of a decreasing sequence of continuous functions  $u_{b^-,b^+}(x,t)$ inside compact subsets of $\overline{\Omega} \times (0,+\infty)$.
\end{proof}

Next, let us describe the transition layer that describes the connection of $u_0$ at $t=0$ with $\widetilde u_0$ at $t=0^+$ as $b^- \to 0$.

\begin{proof}[Proof of Theorem \ref{teo.layer}]
    Let $V(x,t)=u_{b^-,b^+}(x,b^-t)$. Then,  $\Delta V(x,t)=\Delta u_{b^-,b^+}(x, b^- t)$ and $\partial_t V(x,t)=b^-\partial_t u_{b^-,b^+}(x, b^- t)$. Hence,
    \[
        \begin{cases}
            \partial_t V = A(\Delta V)&\textup{in }\Omega\times (0,+\infty),\\
            V =0&\textup{on }\partial\Omega\times (0,+\infty),\\
            V(\cdot,0)= u_0&\textup{in }\Omega,
        \end{cases}
        \qquad\textup{with }A(D) =
        \begin{cases}
            D, & D\leq 0, \\[8pt]
            \displaystyle \frac{b^-}{b^+} D, & D>0.
        \end{cases}
    \]
    The viscosity limit as $b^- \to 0^+$ of $V$ is a solution to
    $$
        \partial_t V = \min\{ \Delta V, 0 \}
    $$
    with boundary condition $V=0$ and initial datum $u_0$. This limit function $V$ is nonincreasing in time, and is bounded from below by $\widetilde{u}_0$, by comparison, since this latter function is a stationary solution of the problem which is initially below $u_0$.
    Hence, the pointwise limit $\lim_{t\to\infty}V(x,t)=z(x)$ exists. If we prove that $0\le\min\{\Delta z,0\}$ in the viscosity sense in $\Omega$, we are done. Indeed, this implies that $\Delta z\ge0$ in the viscosity sense, and since we also know that $z\le u_0$, because of the monotonicity of $u$, then due to the definition of $\widetilde u_0$, we have $z\le \widetilde u_0$, so that $z\equiv \widetilde u_0$.

   Because of the monotone convergence of $V(x,t)$ to $z(x)$, this latter function is upper semicontinuous. Let $\varphi$ be a smooth test function such that $\varphi-z$ attaches a strict minimum at $x_0\in\Omega$. Using again the convergence, there are points $\{x_{t}\}$ such that $x_t\to x_0$ as $t\to\infty$ and $\varphi-V(\cdot,t)$ attains a minimum at $x_t$. Hence, as $V$ is a viscosity solution to $V_t=\{\Delta V,0\}$, then $0\le\min\{\Delta\varphi(x_t),0\}$. Passing to the limit as $t\to\infty$, we conclude that  $0\le\min\{\Delta\varphi(x_0),0\}$, hence the result follows.

   Uniform convergence in $\overline{\Omega}$ follows now using again Dini's theorem.
\end{proof}

Now Theorems \ref{th:b2-inf} and \ref{teo.lll} follow as a consequence of our previous arguments.

\begin{proof}[Proof of Theorem \ref{th:b2-inf}]
We can write the problem satisfied by $u$ as
\begin{equation} \label{eq.main.intro.99.88}
    \begin{cases}
    \displaystyle \partial_tu= \min \big\{ \frac{1}{b^-}\Delta u, \frac{1}{b^+}\Delta u\big\} &\textup{in }\Omega\times(0,+\infty),\\
    \displaystyle u=0&\textup{on }\partial\Omega\times(0,+\infty),\\
    \displaystyle u(\cdot,0)=u_0&\textup{in }\Omega,
    \end{cases}
\end{equation}
from where we get that the limit as $b^+ \to +\infty $ solves
\begin{equation} \label{eq.main.intro.proj-1.00.88}
        \begin{cases}
        \displaystyle  b^-\partial_t \overline{u} =\min\{ \Delta \overline{u},0\} & \textup{in }\Omega \times (0,+\infty),\\
        \displaystyle \overline{u}  = 0&  \textup{on }\partial \Omega \times (0,+\infty),\\
        \displaystyle  \overline{u} (\cdot,0)  = u_0&  \textup{in } \Omega.
        \end{cases}
    \end{equation}

    In addition, we have that the function $w(x,t)=u_{b^-,b^+}(x,b^+t)$ is in fact a solution to our problem, $w=u_{\widetilde b^-,1}$, where  $\widetilde b^-=b^-/b^+$. Apply the previous results to obtain that $w(x,t)\to \bar{u} (x,t)$ as $b^+ \to +\infty$. We finally observe that we have $u(x,t)=w(x,t/b^+)\to \widetilde u(x,0)=\widetilde u_0(x)$.
\end{proof}

\begin{proof}[Proof of Theorem \ref{teo.lll}]
The proof follows from the same arguments used to prove Theorem~\ref{teo.layer}.
\end{proof}

\section{Large-time behaviour} \label{sect-asymp}
\setcounter{equation}{0}

This section is devoted to study the behaviour of the solutions as $t\to\infty$. We prove Theorems \ref{u0>0}--\ref{th:bad-u0}.
\begin{proof}[Proof of Theorem \ref{u0>0}]
    Given $t>0$, let $$c(t):=\inf \Big\{k: u(x,t)\le ke^{-\lambda_1 t}\varphi(x)\textup{ for all }x\in\Omega\Big\}.$$ The estimates~\eqref{eq:starting.point.best.fitting} guarantee that this function is well defined, and comparison that it is non-increasing. Since it is also bounded from below, it has a limit, $c_*=\lim\limits_{t\to\infty} c(t)$.

    Let $v^{\tau}(x,t)=e^{\lambda_1(1+\tau)}u(x,t+\tau)$. Then, $v^\tau$ satisfies the equation and the boundary condition in~\eqref{eq.main.intro}. Moreover,
    \[
        ke^{-\lambda_1(t-1)}\varphi(x)\le v^\tau(x,t)\le c(t+\tau) e^{-\lambda_1(t-1)}\varphi(x).
    \]
    The regularity estimates from~\cite{Evans-Lenhart} ensure the existence of a subsequence along which $v^\tau$ and its derivatives up to order two converge uniformly in $\Omega$. Let $V$ be the limit of $v^\tau$ along this subsequence. On the one hand, $V$ satisfies the equation and the boundary condition in~\eqref{eq.main.intro}. In particular, it is a subsolution of the heat equation with parameter $\kappa=b^-$.  On the other hand,
    \[
        V(x,t)\le c_* e^{-\lambda_1(t-1)}\varphi(x).
    \]
    We claim that $V(x,t)\equiv c_* e^{-\lambda_1(t-1)}\varphi(x)$. Indeed, since $V$ is a subsolution and the right-hand side a solution to the heat equation $b^-\partial_t u=\Delta u$, by the strong maximum principle they cannot touch in the interior unless they are identical; and by Hopf's lemma, their normal derivatives at the boundary are strictly ordered. Therefore, $V(x,t)\le (c_*-2\varepsilon)e^{-\lambda_1(t-1)}\varphi(x)$ for some $\varepsilon>0$ small. Thus, there is some large $\tau_n$ in the subsequence such that
    \[
       e^{\lambda_1(1+\tau_n)}u(x,t+\tau_n)= v^{\tau_n}(x,t)\le (c_*-\varepsilon)e^{-\lambda_1(t-1)}\varphi(x),
    \]
    whence
    \[
        u(x,t+\tau_n)\le (c_*-\varepsilon)e^{-\lambda_1 (t+\tau_n)}\varphi(x),
    \]
    a contradiction.

    Once we have identified the limit, convergence is not restricted to a subsequence. Finally, we take $t=1$, and we get
    \[
        e^{\lambda_1(1+\tau)}u(x,1+\tau)\to c_*\varphi(x)
    \]
    as $\tau\to\infty$ uniformly in $\Omega$.
\end{proof}

\begin{proof}[Proof of Theorem \ref{u0 casi<0}]
    First, we observe that the solution to the standard heat equation becomes negative in finite time provided the projection of the initial value over the first eigenfunction is nonpositive. This can be proved by the uniform convergence to the first eigenfunction. Then apply a comparison argument and Theorem~\ref{u0>0} to conclude the result.
\end{proof}

In order to prove Theorem \ref{th:bad-u0} that deals with the bad initial data, we first define two sets of real values of $\theta>1$ and rewrite that result in a more convenient form.

Recall the notation $\theta=b^+/b^->1$, and, for instance, fix $b^->0$. Take $u_\theta=u_{b^-,\theta b^-}$, the solution corresponding to a bad initial datum, i.e., satisfying~\eqref{bad-u0}. We define the sets
\begin{equation}\label{sets-AB}
    \begin{array}{l}
        A=\Big\{\theta>1\,:\,\exists k>0 \text{ with } \displaystyle\lim_{t\to\infty} e^{\lambda_1 t} u_\theta(x,t)=k\varphi(x)\Big\}, \\ [3mm]
        B=\Big\{\theta>1\,:\,\exists k>0 \text{ with } \displaystyle\lim_{t\to\infty} e^{\lambda_2 t} u_\theta(x,t)=-k\varphi(x)\Big\}.
    \end{array}
\end{equation}

\begin{theorem}
    There exists some $\theta^*>1$ such that
    $$
        A=(1,\theta^*), \qquad B=(\theta^*,\infty).
    $$
\end{theorem}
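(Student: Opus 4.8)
The goal is to show that the sets $A$ and $B$ from \eqref{sets-AB} are intervals of the stated form. The starting observations are: (i) by Theorem~\ref{u0>0}, if $u_\theta$ does not change sign for large $t$ then exactly one of the two behaviours \eqref{eq:lambda1}, \eqref{eq:lambda2} holds (once the solution has fixed sign, restart from that time); (ii) by Theorem~\ref{u0 casi<0}, every $\theta>1$ lies in $A\cup B\cup\{\theta:u_\theta\text{ changes sign for all }t\}$; and (iii) the key monotonicity: for fixed $b^-$, the map $\theta\mapsto u_\theta$ is nondecreasing, since $u_{b^-,\theta b^-}$ increases as $b^+=\theta b^-$ increases (Proposition on monotonicity in the parameters). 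So the real content is to turn this monotonicity of the \emph{solutions} into the statement that $A$ is an initial interval, $B$ a final interval, and that they are separated by a single point.

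The first step is to show $A$ is open-to-the-left in the sense: if $\theta_0\in A$ and $1<\theta<\theta_0$ then $\theta\in A$. Here I would use $u_\theta\le u_{\theta_0}$ together with the fact, from the proof of Theorem~\ref{u0>0}, that $\theta_0\in A$ means $e^{\lambda_1 t}u_{\theta_0}(x,t)\to k_0\varphi(x)$ with $k_0>0$; combining with the universal lower bound from \eqref{comportamiento} (or rather, an argument showing the lower bound cannot have the faster rate $\lambda_2$ unless the solution is eventually negative) forces $u_\theta$ to be eventually positive, hence in $A$. Symmetrically, $B$ is open-to-the-right: if $\theta_0\in B$ and $\theta>\theta_0$, then $u_\theta\ge u_{\theta_0}$; but $\theta_0\in B$ says $u_{\theta_0}$ is eventually negative with rate $\lambda_2$, and one must show the larger solution $u_\theta$ cannot overshoot into $A$ — this uses that once a solution has a negative projection on $\varphi$ at some time it stays that way (compare with the heat equation $b^+\partial_t u=\Delta u$ via Proposition~\ref{prop-<heat}, whose projection on $\varphi$ decays monotonically; more carefully one tracks $\int_\Omega u_\theta(\cdot,t)\varphi$). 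The conclusion is that $A$ and $B$ are (relatively open) intervals, $A$ an initial segment of $(1,\infty)$ and $B$ a terminal one, and they are disjoint.

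Next I would define $\theta^*=\sup A$ (equivalently $\inf B$), and show $A\cup B=(1,\infty)\setminus\{\theta^*\}$, i.e., the ``changes sign forever'' case happens \emph{only} at $\theta=\theta^*$. For $\theta<\theta^*$: pick $\theta'\in A$ with $\theta<\theta'$, so $u_\theta\le u_{\theta'}$ eventually positive — but this alone does not give positivity of $u_\theta$. The right move is the symmetric squeeze: I also need a $\theta''<\theta$ controlling from below, but for $\theta$ near $1$ that is not available. Instead, the cleanest route: show that the set of $\theta$ for which $u_\theta$ changes sign for all $t$ is at most a single point, by a strict-monotonicity / continuity argument. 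Using continuous dependence on the parameters (the Proposition on continuous dependence) and the fact that ``$u_\theta(\cdot,t)$ has a sign'' is an open condition propagated in $t$, one shows that $A$ and $B$ are in fact \emph{open} in $(1,\infty)$; since they are nonempty complementary-up-to-a-closed-set intervals and $(1,\infty)$ is connected, the leftover closed set $(1,\infty)\setminus(A\cup B)$ is a single point $\theta^*$, and at that point $u_{\theta^*}$ must change sign for every $t$ (if it had a fixed sign past some time it would lie in $A$ or $B$). That $A\neq\emptyset$ follows from Theorem~\ref{th:b1-0}/monotonicity for $\theta$ close to $1$ (then $u_\theta$ is close to $u_{b^-,b^-}=U_{b^-}(u_0)$, which is eventually positive since $\int u_0\varphi>0$ by \eqref{bad-u0}); that $B\neq\emptyset$ follows from Theorem~\ref{u0 casi<0} applied after noting that for $\theta$ large, $u_\theta$ is squeezed above by $U_{b^+}(u_0)=U_{\theta b^-}(u_0)$, whose projection on $\varphi$ — no: better, use that for large $\theta$, $u_\theta(x,\theta t)$ is close to $\widehat u$ by Theorem~\ref{teo.lll}, which forces eventual negativity on the original scale; alternatively, compare directly with $u_2(x,t)=-c_2e^{-\lambda_2 t}\varphi$ after observing $u_\theta$ inherits a region of negativity from the layer dynamics $b^-\partial_t\overline u=\min\{\Delta\overline u,0\}$.

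\textbf{Main obstacle.} The genuinely delicate point is the openness of $A$ and $B$ in $\theta$, equivalently excluding a whole interval of ``sign-changing forever'' parameters. Monotonicity of $\theta\mapsto u_\theta$ gives nested solutions but not a \emph{strict} gap, so one must extract strictness. I expect to get it from the strong maximum principle and Hopf's lemma applied to the rescaled limits $V(x,t)=\lim e^{\lambda_i t}u_\theta(x,t+\tau_n)$ exactly as in the proof of Theorem~\ref{u0>0}: these arguments show that a solution which is ``borderline'' (touching the eigenfunction profile) is forced strictly to one side, and the same dichotomy, combined with continuous dependence on $\theta$, prevents an interval of borderline parameters. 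Making this rigorous — tracking the sign of $\int_\Omega u_\theta(\cdot,t)\varphi(x)\,{\rm d}x$ and showing it is eventually of one sign for all $\theta\neq\theta^*$, with a clean transversality at $\theta^*$ — is where the real work lies; everything else is bookkeeping with comparison.
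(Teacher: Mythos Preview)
Your monotonicity direction is backwards: the Proposition in Section~\ref{sect-basic} says that if $b^+<\hat b^+$ then $u_{b^-,b^+}\ge u_{b^-,\hat b^+}$, so $\theta\mapsto u_\theta$ is \emph{nonincreasing}, not nondecreasing. With the correct sign the interval structure of $A$ and $B$ is a one-line comparison each: if $\theta_0\in A$ and $\theta<\theta_0$ then $u_\theta\ge u_{\theta_0}$, and since $u_{\theta_0}$ is eventually positive so is $u_\theta$, whence $\theta\in A$ by Theorem~\ref{u0>0}; symmetrically for $B$. The convoluted detours you attempt through~\eqref{comportamiento}, projection estimates, and ``the larger solution cannot overshoot'' are artifacts of working with the reversed inequality and should be dropped.

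On the part you flag as the main obstacle---showing $\sup A=\inf B$---you correctly identify the projection $\int_\Omega u_\theta(\cdot,t)\varphi$ as the right quantity, but you leave the argument as a sketch (openness via continuous dependence plus Hopf on rescaled limits) without carrying it out. The paper closes the gap by a different and shorter mechanism: it introduces the auxiliary sets
\[
\mathcal{A}=\Big\{\theta:\textstyle\int_\Omega u_\theta(\cdot,t)\varphi>0\ \text{for all large }t\Big\},\qquad
\mathcal{B}=\Big\{\theta:\textstyle\int_\Omega u_\theta(\cdot,t)\varphi\le0\ \text{for all }t>0\Big\},
\]
and argues the relations $\mathcal{A}=\overline{\mathcal{B}}^c$, $\mathcal{B}=B$, $\mathcal{A}\subset A$, which immediately give $A=\overline{B}^c$ and hence $\alpha=\beta=\theta^*$. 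The engine behind these relations is Theorem~\ref{u0 casi<0}: once the projection onto $\varphi$ is nonpositive at some time, restarting from there forces the $\lambda_2$-behaviour, so $\theta\in B$. Your openness/Hopf strategy may also work, but it is a genuinely different route and, as written, the key step (excluding a whole interval of sign-changing-forever parameters) remains an unproved claim.
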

\begin{proof}
    For any $b^->0$, as we have seen, the function $U_{b^-}(u_0)$ satisfies $U_{b^-}(u_0)(x,t)>0$ for every $x\in\Omega$ and $t>t_1$, for some $t_1$ large. Continuous dependence on $b^+$ implies that for $b^+\gtrsim b^-$, i.e. $\theta\gtrsim1$,  the same holds with $u_\theta$. Thus $\theta\in A$ and $A\supset(1,1+\varepsilon)$ for some $\varepsilon>0$. Monotonicity in $b^+$ implies that if $\theta\in A$ and $\theta'<\theta$ then $\theta'\in A$, and then $A$ is an interval $A=(1,\alpha)$.

    On the other hand, if $b^+$ is large (thus $\theta$ is large) then, again by continuous dependence, and using Theorem~\ref{th:b2-inf}, the solution $u_\theta$ is close to $\widetilde u_0$, which by definition is $\widetilde u_0(x)\le0$. Thus $\theta\in B$ and $B$ contains some interval $(M,\infty)$. The same argument could have been made taking $b^-$ small and using Theorem~\ref{th:b1-0}. Analogously as before $B$ is an interval, $B=(\beta,\infty)$.

    We want to prove that $\alpha=\beta$. To that purpose we define the sets
    $$
        \begin{array}{l}
            \displaystyle\mathcal{A}=\left\{\theta>1\,:\,\exists\, t(\theta) \text{ with } \int_\Omega u(x,t)\varphi(x)\,{\rm d}x>0 \;\forall\, t>t(\theta)\right\}, \\ [3mm]
            \displaystyle\mathcal{B}=\left\{\theta>1\,:\,\int_\Omega u(x,t)\varphi(x)\,{\rm d}x\le0 \;\forall\, t>0\right\}.
        \end{array}
    $$
    We have
    $$
        \mathcal{A}=\overline{\mathcal{B}}^c,\quad\mathcal{B}=B,\quad \mathcal{A}\subset A.
    $$
    This implies $A=\overline{B}^c$, i.e., $\alpha=\beta=\theta^*$.

    As a consequence, for $\theta=\theta^*$ the solution always changes sign for every time.
\end{proof}
We now present a one-dimensional example of the critical behavior for $\theta=\theta_*$, a solution that always changes sign and decays faster than expected. In fact, we can construct solutions that decay faster than any given exponential.

\noindent\textbf{Example.} Let $\Omega=(0,1)$. The first (normalized) eigenfunction and eigenvalue of the Laplacian in $(0,1)$ are given by $\varphi( x)=\sin(\pi x)$, $\lambda=\pi^2$. For $0<a<1$ and $k>0$ define the function
\begin{equation}\label{example-psi}
    \psi(x)=
    \begin{cases}
        -k\sin(\pi x/a)&\quad x\in (0,a), \\
        \sin(\pi(x-a)/(1-a))&\quad x\in (a,1).
    \end{cases}
\end{equation}
If we choose $k=a/(1-a)$ we would get that $\psi\in C^2(\Omega)$. Now for $\omega>0$ take
$$
    u(x,t)=e^{-\omega t}\psi(x).
$$
Since $\partial_t u=-\omega u$, we get that $\partial_t u>0$ in $(0,a)$ and $\partial_t u<0$ in $(a,1)$. Thus, in order for $u$  to be a solution to our problem we must have
$$
    \begin{cases}
        -b^+\omega\psi(x)=\psi''(x),&x\in(0,a),\\
        -b^-\omega\psi(x)=\psi''(x),& x\in(a,1),
    \end{cases}
$$
since the homogeneous boundary conditions hold trivially. The above conditions are satisfied if
$$
    b^+\omega=\frac{\pi^2}{a^2},\qquad b^-\omega=\frac{\pi^2}{(1-a)^2}.
$$
The quotient is $\theta=b^+/b^-=(1-a)^2/a^2$, which produces the values
\begin{equation}\label{example-aomega}
    a=\frac1{1+\sqrt\theta},\qquad \omega=\frac{\pi^2(1+1/\sqrt\theta)^2}{b^-}.
\end{equation}
It is clear that $\omega>\pi^2/b^-=\max\{\lambda_1,\lambda_2\}$, thus giving a faster decay than the one in \eqref{comportamiento}. Observe also that
$$
    \begin{array}{rl}
        \displaystyle I&\displaystyle=\int_\Omega u(x,0)\varphi(x)\,{\rm d}x\\[3mm]
        & \displaystyle =-k\int_0^a\sin(\pi x/a)\sin(\pi x)\,dx+\int_a^1\sin(\pi(x-a)/(1-a))\sin(\pi x)\,{\rm d}x \\ [3mm]
        &=\displaystyle\frac{(1-2a)\sin(a\pi)}{a(2-a)(1-a)^2(1+a)\pi},
    \end{array}
$$
and $I>0$ for $a<1/2$, which is precisely the case, since $\theta>1$. Thus $u(\cdot,0)=\psi$ is what we denoted as a bad initial datum. The same holds for every $t>0$, because $\displaystyle\int_\Omega u(x,t)\varphi(x)\,{\rm d}x=e^{-\omega t}I>0$.

\begin{figure}
  \centering
  \includegraphics[width=5cm]{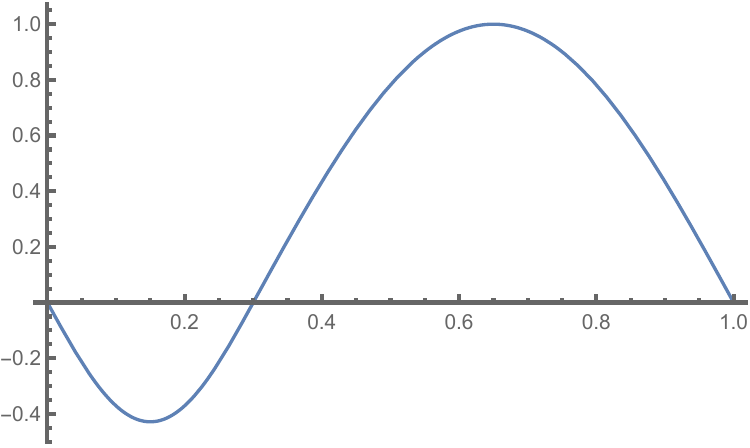}
  \caption{The initial value $\psi(x)$}
\end{figure}

On the other hand, given any $a\in(0,1/2)$, the critical value corresponding to the bad initial value $u_0=\psi$ in \eqref{example-psi} with $k=a/(1-a)$, is
$$
    \theta_*=\frac{(1-a)^2}{a^2}.
$$

Observe that $\psi'(0)=\psi'(1)$ by symmetry, so that we can glue any number of functions of the above type to generalize this example and get in that way larger decay rates. Thus, consider the function
$$
    \Psi(x)=\sum_{m=0}^\infty\psi(x-m).
$$
For any given $M\in\mathbb{N}$, the initial value function
$$
    \Psi_{M,1}(x)=\Psi(Mx)\mathds{1}_{[0,1]}(x),
$$
produces the solution
$$
    u_{M,1}(x,t)=e^{-M^2\omega t}\Psi_{M,1}(x),
$$
provided $\omega,k$ and $a$ take the values obtained previously. It has $2M-1$ sign changes, and gives a decay rate as large as we like by setting $M$ large. We complete the set of solutions in separate variables by considering the variants $u_{M,j}$ corresponding to the initial values,
$$
    \Psi_{M,2}(x)=\Psi((M+a)x), \qquad\Psi_{M,3}(x)=\Psi(Mx+a),\qquad\Psi_{M,4}(x)=\Psi((M-a)x+a),
$$
restricted to the interval $[0,1]$, see Fig.~\ref{fig2}. In $\Psi_{M,4}$ it must be $M\ge2$, otherwise it coincides with $\varphi$ and it does not change sign. Their exponent decay rates are, respectively, $(M+a)^2\omega$, $M^2\omega$ and $(M-a)^2\omega$.

\begin{figure}
  \begin{center}
  \includegraphics[width=4cm]{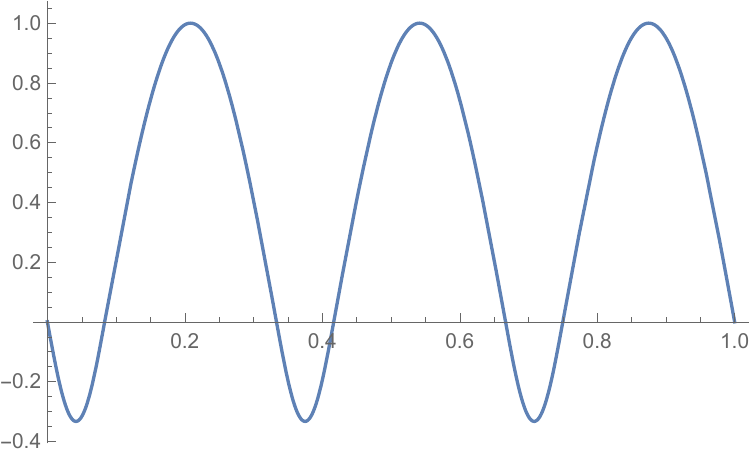}\qquad
  \includegraphics[width=4cm]{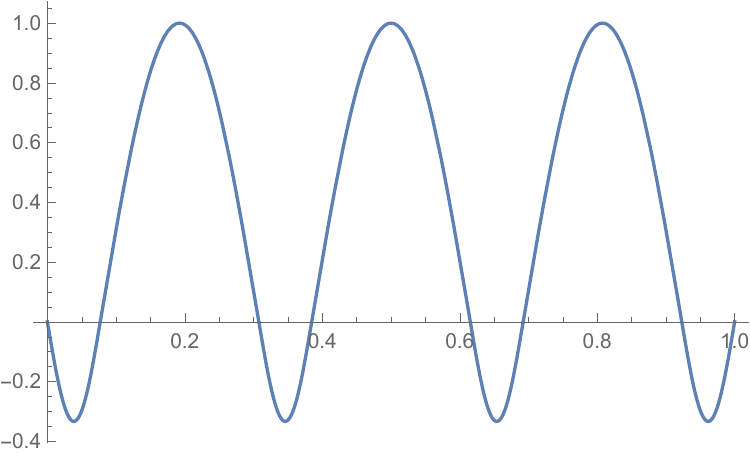} \\ [4mm]
  \includegraphics[width=4cm]{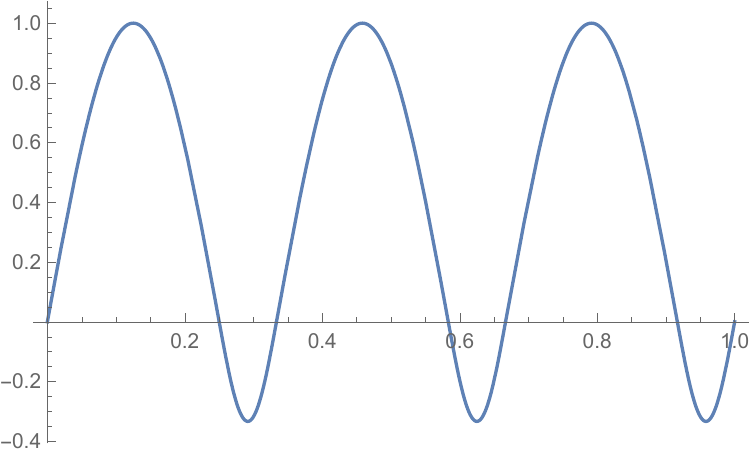}\qquad
  \includegraphics[width=4cm]{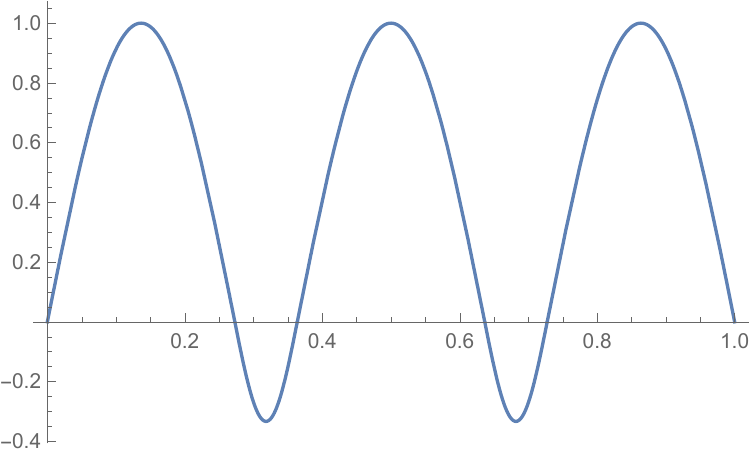}
  \end{center}
  \caption{Examples of the bad initial values $\Psi_{M,j}$ with $M=3$, $j=1,\cdots,4$.}\label{fig2}
\end{figure}

\begin{remark}
    When one tries to extend this idea to a general domain in $\mathbb{R}^N$, first one needs to show the existence of a partition of $\Omega$ into two disjoint sets $D_+$ and $D_-$ in such a way that there exist two eigenfunctions $\varphi_+$ and $\varphi_-$ of the Dirichlet Laplacian in $D_+$ and $D_-$ such that the corresponding eigenvalues are $\theta \lambda$ and $\lambda$ and the normal derivatives across the interface $\Gamma = \overline{D_1} \cap \overline{D_2} \cap \Omega$ verify $\partial_\eta \varphi_+ = - \partial_\eta \varphi_-$ on $\Gamma$.

    This partition problem is difficult to solve in general. When the domain is symmetric, like a square or an annulus, such a partition can be obtained by cutting the square into two rectangles or the annulus into two annuli.
\end{remark}

\section{Games} \label{sect-games}
\setcounter{equation}{0}

We describe in this section a dynamic game whose solution approximates a viscosity solution to Problem~\eqref{eq.main.intro}--\eqref{eq.b.intro}, taking the limit as a parameter that controls the length of the steps made in the game goes to zero.

One player (called controller)  wants to minimize the expected total payoff by choosing the best strategy according to some rules, which are as follows: starting with a token at an initial position $(x_0,t_0) \in \Omega \times (0,+\infty)$ the next position of the token after one round of the game is given by $(x_1, t_1)$ where $x_1$ is chosen at random (with uniform probability) in the ball $B_\varepsilon (x_0)$ of radius $\varepsilon$ around $x_0$, and $t_1=t_0-b \varepsilon^2$ with $b \in [C b^-,C b^+]$ being selected by the player. Here $C$ is a constant that depends only on the dimension and that we will specify later. At this new position $(x_1, t_1)$ the game is played again with the same rules. This procedure generates a sequence of positions of the token $\{(x_i, t_i)\}$. The game ends when $x_\tau \not\in \Omega$ with $t_\tau >0$, or when $t_\tau \leq 0$. The final payoff, that we denote by $\mathcal{P}$, is then given by $\mathcal{P}=0$ in the first case and $\mathcal{P}=u_0 (x_\tau)$ in the second case. A nontrivial boundary data would change the distribution of the payoff.

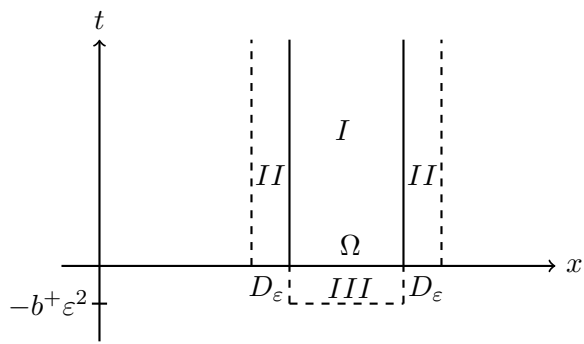
\begin{figure}[h]
    \begin{center}
    \begin{tikzpicture}[scale=1., thick]
    \draw[->] (-0.5,0) -- (6,0) node[right] {$x$};
    \draw[->] (0,-1) -- (0,3) node[above] {$t$};

    \draw[dashed] (2, 0) -- (2, 3);
    \draw[-] (2.5, 0) -- (2.5, 3);
    \draw[-] (4, 0) -- (4, 3);
    \draw[dashed] (4.5, 0) -- (4.5, 3);
    \draw[dashed] (2.5, -0.5) -- (2.5, 0);
    \draw[dashed] (4, -0.5) -- (4, 0);

    \draw[dashed] (2.5, -0.5) -- (4, -0.5);
    \draw[-] (-.1, -0.5) -- (.1, -0.5);

    \node at (3.2, 1.8) {$I$};
    \node at (2.25, 1.2) {$II$};
    \node at (4.25, 1.2) {$II$};
    \node at (3.3, -0.3) {$III$};

    \node[below] at (2.2, 0) {$D_\varepsilon$};
    \node[below] at (4.3, 0) {$D_\varepsilon$};
    \node[above] at (3.3, 0) {$\Omega$};

    \node[left] at (0, -0.5) {$-b^+ \varepsilon^2$};
    \end{tikzpicture}
    \end{center}
    \caption{The different regions in $(x,t)$ for the problem described above : $(I)$ where the equation holds, $(II)$ for the null exterior condition, and $(III)$ for the initial values.}\label{fig3}
\end{figure}

The value of the game is then defined as
\begin{equation} \label{value.eq.sec}
    u^\varepsilon (x,t) = \inf_{S} \mathbb{E}_S^{(x,t)} (\mathcal{P}).
\end{equation}
Here the infimum is taken among all possible strategies $S$ for the player selecting the value of $b$ at each step.

Our first goal is to show that the value function $u^\varepsilon$ defined in \eqref{value.eq.sec} is the unique solution to the maximization Problem~\eqref{eq.main.intro2}.

Let us prove in a first step that \eqref{eq.main.intro2} has a solution.

\begin{lemma}
    Given a continuous function $u_0:\overline{\Omega} \mapsto \mathbb{R}$ with $u_0\big|_{\partial \Omega} = 0$, there exists a solution to Problem \eqref{eq.main.intro2}.
\end{lemma}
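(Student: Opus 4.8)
The plan is to construct a solution by marching forward in time, using that the recursion in~\eqref{eq.main.intro2} is \emph{causal} with a fixed, strictly positive time lag: to evaluate $u^\varepsilon(x,t)$ at a point $(x,t)\in\Omega\times(0,+\infty)$ one only uses the values $u^\varepsilon(y,t-b\varepsilon^2)$ with $y\in B_\varepsilon(x)\subset\Omega+B_\varepsilon$ and $b\in[Cb^-,Cb^+]$, and all of those times satisfy $t-b\varepsilon^2\le t-h<t$, where $h:=Cb^-\varepsilon^2>0$. Write $M:=\|u_0\|_{L^\infty(\Omega)}$ and let $\bar u_0$ denote the extension of $u_0$ by $0$ outside $\Omega$, which is continuous on $\R^N$ because $u_0\big|_{\partial\Omega}=0$. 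First I would fix the initial/exterior data: put $u^\varepsilon(x,t):=\bar u_0(x)$ for $(x,t)\in(\Omega+B_\varepsilon)\times[-Cb^+\varepsilon^2,0]$ and $u^\varepsilon(x,t):=0$ for $(x,t)\in D_\eps\times[-Cb^+\varepsilon^2,+\infty)$; this is bounded by $M$, continuous in $x$, and agrees with the data prescribed in~\eqref{eq.main.intro2}.

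Then I would define $u^\varepsilon$ by induction on the time strips. Suppose $u^\varepsilon$ has been constructed on $(\Omega+B_\varepsilon)\times[-Cb^+\varepsilon^2,kh]$ for some integer $k\ge0$, bounded by $M$ and Borel measurable. For $(x,t)\in\Omega\times(kh,(k+1)h]$ every time $t-b\varepsilon^2$ with $b\in[Cb^-,Cb^+]$ lies in $[\,t-Cb^+\varepsilon^2,\ t-h\,]\subset[-Cb^+\varepsilon^2,kh]$, i.e.\ in the region already built; hence one may define
\[
 u^\varepsilon(x,t):=\inf_{b\in[Cb^-,Cb^+]}\,\dashint_{B_\varepsilon(x)}u^\varepsilon(y,t-b\varepsilon^2)\,{\rm d}y,\qquad (x,t)\in\Omega\times(kh,(k+1)h],
\]
keeping $u^\varepsilon\equiv0$ on $D_\eps$. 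Since an average of a function bounded by $M$ is again bounded by $M$, and the infimum preserves this, the new strip still satisfies $|u^\varepsilon|\le M$; and since $x\mapsto\dashint_{B_\varepsilon(x)}g$ has a modulus of continuity depending only on $\varepsilon$ and $\|g\|_{L^\infty}$, the infimum is continuous in $x$, with a modulus independent of $k$. Letting $k\to\infty$ produces a bounded function $u^\varepsilon$ on $(\Omega+B_\varepsilon)\times[-Cb^+\varepsilon^2,+\infty)$ which, by construction, satisfies all three identities in~\eqref{eq.main.intro2}; this is the desired solution.

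The computations here are routine: the a priori bound $|u^\varepsilon|\le M$ and the equicontinuity in $x$ are immediate from the averaging, and it is the causal structure that makes the forward induction legitimate. The one point that deserves attention is the well-posedness of a single step, namely that if $u^\varepsilon$ is bounded and Borel measurable on the slab already built, then $(x,t)\mapsto\inf_{b\in[Cb^-,Cb^+]}\dashint_{B_\varepsilon(x)}u^\varepsilon(y,t-b\varepsilon^2)\,{\rm d}y$ is again bounded and Borel measurable, so that the induction closes. This follows because $(x,t,b)\mapsto\dashint_{B_\varepsilon(x)}u^\varepsilon(y,t-b\varepsilon^2)\,{\rm d}y$ is jointly measurable and continuous in $b$ off an exceptional set of times inherited from the jump the data makes at $t=0$ (so that, in general, $u^\varepsilon$ is discontinuous in $t$ only on that exceptional set of times, among them the grid of time steps $\{kh:k\ge0\}$), whence its infimum over the compact interval $[Cb^-,Cb^+]$ is a bona fide bounded Borel function of $(x,t)$. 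I expect this bookkeeping to be the only, mild, obstacle; uniqueness of the solution to~\eqref{eq.main.intro2}, the continuity of $u^\varepsilon$ off the exceptional times, and its identification with the game value~\eqref{value.eq.sec} are handled afterwards.
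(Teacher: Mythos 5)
Your proof is correct, and at bottom it is the same causal, forward-in-time idea the paper uses, but packaged more directly. The paper constructs $u^\varepsilon$ as the increasing limit of iterates $u_n^\varepsilon$ started from the constant $u_1^\varepsilon=\min_{\overline\Omega}u_0$; one direction of the DPP is then obtained by passing to the limit in the inequality satisfied by the iterates, and the opposite inequality (hence equality) is shown by the very time-strip argument you use: on $\Omega\times(0,Cb^-\varepsilon^2)$ the formula only queries negative times, so $u_{n+1}^\varepsilon$ is already independent of $n\geq1$ there; then on $[Cb^-\varepsilon^2,2Cb^-\varepsilon^2)$ it stabilizes for $n\geq2$, and so on. You dispense with the monotone iteration and define $u^\varepsilon$ strip by strip outright, which is shorter and makes the causal structure (the strictly positive delay $h=Cb^-\varepsilon^2$) the sole engine of the argument. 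The paper's version buys, as a byproduct, a representation of the solution as a supremum of subsolutions, which is sometimes useful; yours buys conciseness. Two small remarks: (i) you are a bit more careful than the paper at the corner $D_\varepsilon\times[-Cb^+\varepsilon^2,0]$, where the paper's formula~\eqref{eq.main.intro2} does not explicitly prescribe the value, but the values needed there in the DPP are reached only when $t\le Cb^+\varepsilon^2$ and are, in any case, pinned down by your continuous extension $\bar u_0\equiv 0$ off $\Omega$; (ii) the measurability point you flag (the infimum over the compact $b$-interval of an average of a merely bounded measurable slab) is real, and neither proof develops it in full, but your indication — only countably many exceptional times, continuity in $x$ with a uniform modulus, continuity in $b$ off that exceptional set — is the right level of detail and is consistent with the fact that the paper quietly works with functions that are continuous in $x$ and piecewise continuous in $t$.
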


\begin{proof}
We take
$$
    u_1^\eps = \min_{x\in\overline\Omega} u_0 (x).
$$
Then we define iteratively the sequence of functions $u_n^\eps$, $n\ge1$, by
\begin{equation} \label{eq.main.intro77}
    \begin{cases}
        \displaystyle u_{n+1}^\varepsilon (x,t) = \inf_{b\in [C b^-, C b^+]}
        \dashint_{B_{\varepsilon} (x)} u_n^\varepsilon (y, t - b \varepsilon^2)\,{\rm d}y,\quad
          & \Omega \times (0,+\infty), \\
        \displaystyle u_{n+1}^\varepsilon (x,t) = 0, & D_\varepsilon \times (0,+\infty), \\
        \displaystyle u_{n+1}^\varepsilon (x,t) = u_0(x), \quad & \Omega \times [-b^+\varepsilon^2,0],
    \end{cases}
\end{equation}
where $D_\varepsilon=(\Omega+B_\varepsilon)\setminus\Omega$. We have that $\{u_n\}$ is an increasing sequence, $u_{n+1} \geq u_n$,
that are subsolutions to \eqref{eq.main.intro2}, that is, they verify
\begin{equation} \label{eq.main.intro99}
    \begin{cases}
        \displaystyle u_{n+1}^\varepsilon (x,t) \leq \inf_{b\in [C b^-, C b^+]}\dashint_{B_{\varepsilon} (x)} u_{n+1}^\varepsilon (y, t - b \varepsilon^2)\,{\rm d}y,\quad& \Omega \times (0,+\infty), \\
        \displaystyle u_{n+1}^\varepsilon (x,t) = 0, & D_\varepsilon \times (0,+\infty), \\
        \displaystyle u_{n+1}^\varepsilon (x,t) = u_0(x), \quad & \Omega \times [-b^+\varepsilon^2,0].
    \end{cases}
\end{equation}
We also have that $\{u^\eps_n\}$ is a bounded sequence, since it holds that $u^\eps_n (x) \leq u_1^\eps$ for every $n\ge1$. Hence there exists the limit
$$
    \lim_{n\to \infty} u^\eps_n (x) = u^\eps (x).
$$
Passing now to the limit in the inequality in \eqref{eq.main.intro99} we obtain
\begin{equation} \label{q1}
    \displaystyle u^\varepsilon(x,t)\leq\inf_{b\in [C b^-, C b^+]}\dashint_{B_{\varepsilon}(x)}u^\varepsilon(y,t-b\varepsilon^2)\,{\rm d}y,
\end{equation}
In fact, we have that
\begin{align*}
    u_{n+1}^\varepsilon (x,t) &\leq \inf_{b\in [C b^-, C b^+]}\dashint_{B_{\varepsilon} (x)} u_{n+1}^\varepsilon (y, t - b \varepsilon^2)\,{\rm d}y \\
    &\leq \dashint_{B_{\varepsilon} (x)} u_{n+1}^\varepsilon (y, t - b \varepsilon^2)\,{\rm d}y \leq \dashint_{B_{\varepsilon} (x)} u^\varepsilon (y, t - b \varepsilon^2)\,{\rm d}y.
\end{align*}
Then, passing to the limit as $n \to \infty$, for every
$b\in [b^-,b^+]$ we have
$$
    u^\varepsilon (x,t) \leq  \dashint_{B_{\varepsilon} (x)} u^\varepsilon (y, t - b \varepsilon^2)\,{\rm d}y.
$$
Therefore, we get
$$
u^\varepsilon (x,t) \leq \inf_{b\in [C b^-, C b^+]}
\dashint_{B_{\varepsilon} (x)} u^\varepsilon (y, t - b \varepsilon^2)\,{\rm d}y.
$$

To obtain an equality here we need to further analyze
the sequence $\{u_n^\eps\}$. Let us call, to simplify the notation,
$G$ to the prescribed values, that is,
$$
    G(x,t)=
    \begin{cases}
        \displaystyle  0, & (x,t)\in D_\varepsilon \times (-C b^+ \eps^2,+\infty), \\
        \displaystyle u_0(x), \quad & (x,t)\in \Omega \times [-C b^+\varepsilon^2,0].
    \end{cases}
$$

For $(x,t) \in \Omega \times (0,C b^-\eps^2)$ we have that the number
\begin{equation}\label{inf-un}
    \inf_{b\in [C b^-,C b^+]}\dashint_{B_{\varepsilon} (x)} u_n^\varepsilon (y, t - b \varepsilon^2)\,{\rm d}y
\end{equation}
involves only negative times and therefore it does not depend on $n$.
Hence,
$$
    u_{n+1}^\varepsilon (x,t) = \inf_{b\in [C b^-,C b^+]}\dashint_{B_{\varepsilon} (x)} u_n^\varepsilon (y, t - b \varepsilon^2)\,{\rm d}y= \inf_{b\in [C b^-, C b^+]}\dashint_{B_{\varepsilon} (x)} G (y, t - b \varepsilon^2)\,{\rm d}y,
$$
for every $n\geq 1$.
Since, $u_n = G$ for every $n\geq 2$ outside $\Omega \times (0,\infty)$ we obtain that
$$
    u^\varepsilon (x,t) = \inf_{b\in [C b^-, C b^+]}\dashint_{B_{\varepsilon} (x)} u^\varepsilon (y, t - b \varepsilon^2)\,{\rm d}y,
$$
for $(x,t) \in \Omega \times (0,C b^-\eps^2)$.

Now, for $(x,t) \in \Omega \times [C b^-\eps^2, 2 C b^-\eps^2)$ we have that the above quantity \eqref{inf-un} involves only times that are smaller than $C b^-\eps^2$ and therefore from our previous computations we have that it does not depend on $n$ for $n\geq 3$. Then we get that
$$
    u_{n+1}^\varepsilon (x,t) = \inf_{b\in [C b^-,C b^+]}\dashint_{B_{\varepsilon} (x)} u^\varepsilon (y, t - b \varepsilon^2)\,{\rm d}y
$$
for every $n\geq 2$ and hence we obtain
$$
    u^\varepsilon (x,t) = \inf_{b\in [C b^-,C b^+]}\dashint_{B_{\varepsilon} (x)} u^\varepsilon (y, t - b \varepsilon^2)\,{\rm d}y,
$$
for $(x,t) \in \Omega \times [C b^-\eps^2, 2C b^-\eps^2)$.

Iterating this procedure we conclude that
\begin{equation} \label{q12}
    \displaystyle u^\varepsilon (x,t) = \inf_{b\in [C b^-,C b^+]}\dashint_{B_{\varepsilon} (x)} u^\varepsilon (y, t - b \varepsilon^2)\,{\rm d}y,
\end{equation}
for $(x,t) \in \Omega \times (0,+\infty)$.

Finally, we also have
$$
    u^\varepsilon(x,t)  = 0 \quad\textup{ if }(x,t)\in D_\varepsilon \times (0,+\infty),\qquad u^\varepsilon(x,t) = u_0(x)\quad \textup{ if }(x,t)\in  \Omega \times [-b^+\varepsilon^2,0],
$$
and we conclude that $u^\eps$ solves \eqref{eq.main.intro2}.
\end{proof}

With the help of this existence result we next show that any solution to \eqref{eq.main.intro2} is the value of the game.

First, let us introduce some concepts from probability theory.

\subsection{Probability. The Optional Stopping Theorem.}
We briefly recall (see \cite{Williams}) that a sequence of random variables $\{M_{k}\}_{k\geq 1}$ is a supermartingale (submartingale) if
$$
    \mathbb{E}[M_{k+1}\arrowvert M_{0},M_{1},\dots,M_{k}]\leq M_{k} \ \ (\geq).
$$
We have the following result, known as the Optional Stopping Theorem (denoted by {\it OST} in what follows).

\begin{theorem} \label{teo.OST}
    Let $\{M_{k}\}_{k\geq 0}$ be a supermartingale and let $\tau$ be a stopping time such that one of the following conditions hold:
    \begin{itemize}
        \item[(a)] the stopping time $\tau$ is bounded almost surely;
        \item[(b)] $\mathbb{E}[\tau]<\infty$ and there exists a constant $c>0$ such that $\mathbb{E}[M_{k+1}-M_{k}\arrowvert M_{0},...,M_{k}]\leq c$;
        \item[(c)] there exists a constant $c>0$ such that $|M_{\min \{\tau,k\}}|\leq c$ almost surely for every $k$.
    \end{itemize}
    Then
    \begin{equation}\label{supermartin}
        \mathbb{E}[M_{\tau}]\leq \mathbb{E} [M_{0}].
    \end{equation}
    The inequality in \eqref{supermartin} is reversed if $\{M_{k}\}_{k\geq 1}$ is a submartingale.
\end{theorem}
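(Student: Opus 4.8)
The statement is the classical Optional Stopping Theorem, and I would reproduce the standard martingale argument (see \cite{Williams}). The plan is to work with the stopped process $M^\tau_k:=M_{\min\{\tau,k\}}$ and then pass to the limit $k\to\infty$. First I would check that $\{M^\tau_k\}_{k\ge0}$ is again a supermartingale. Writing $\mathcal F_k$ for the information generated by $M_0,\dots,M_k$, the event $\{\tau>k\}$ is the complement of $\{\tau\le k\}$ and hence belongs to $\mathcal F_k$, since $\tau$ is a stopping time; because
\[
    M^\tau_{k+1}-M^\tau_k=\mathds{1}_{\{\tau>k\}}\,(M_{k+1}-M_k),
\]
taking conditional expectations gives $\mathbb E[M^\tau_{k+1}-M^\tau_k\mid\mathcal F_k]=\mathds{1}_{\{\tau>k\}}\,\mathbb E[M_{k+1}-M_k\mid\mathcal F_k]\le0$. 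Iterating this inequality yields
\[
    \mathbb E[M_{\min\{\tau,k\}}]\le\mathbb E[M^\tau_0]=\mathbb E[M_0]\qquad\text{for every }k\ge0 .
\]

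It remains to let $k\to\infty$ and show $\mathbb E[M_{\min\{\tau,k\}}]\to\mathbb E[M_\tau]$; this is where the three hypotheses enter. Under (a), if $\tau\le N$ almost surely then $M_{\min\{\tau,k\}}=M_\tau$ for all $k\ge N$ and there is nothing more to prove. Under (c), the random variables $M_{\min\{\tau,k\}}$ are uniformly bounded by $c$, so, $\tau$ being almost surely finite, they converge a.s.\ to $M_\tau$ and the bounded convergence theorem applies. Under (b), I would use the telescoping estimate
\[
    \bigl|M_{\min\{\tau,k\}}\bigr|\le|M_0|+\sum_{j=0}^{\tau-1}|M_{j+1}-M_j|
\]
and note that the sum on the right has finite expectation: writing it as $\sum_{j\ge0}\mathbb E\bigl[\mathds{1}_{\{\tau>j\}}|M_{j+1}-M_j|\bigr]$ and conditioning on $\mathcal F_j$, the bound on the conditional increments together with $\mathbb E[\tau]<\infty$ makes it integrable, so it is a dominating function and dominated convergence applies (recall $\mathbb E[\tau]<\infty$ forces $\tau<\infty$ a.s., hence $M_{\min\{\tau,k\}}\to M_\tau$ a.s.). Finally, the submartingale case follows at once by applying the supermartingale statement to $\{-M_k\}_{k\ge0}$.

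The only genuinely delicate point is this passage to the limit: almost sure convergence $M_{\min\{\tau,k\}}\to M_\tau$ is immediate on $\{\tau<\infty\}$, but upgrading it to convergence of expectations requires a uniformly integrable majorant (or bounded convergence), and that is exactly what the trichotomy (a)--(c) is designed to supply. Everything else is routine bookkeeping with the stopping-time property of $\tau$ and the tower property of conditional expectation.
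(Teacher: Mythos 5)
The paper itself does not prove this result: immediately after the statement it writes ``For the proof of this classical result we refer to \cite{Doob,Williams}.'' So there is no paper proof to compare against. Your argument is exactly the standard proof from those references (in particular from Williams' book): stop the process, observe that $M^{\tau}_k = M_{\min\{\tau,k\}}$ is again a supermartingale because $\{\tau>k\}\in\mathcal F_k$ and the increment is $\mathds 1_{\{\tau>k\}}(M_{k+1}-M_k)$, conclude $\mathbb E[M_{\min\{\tau,k\}}]\le\mathbb E[M_0]$, and then pass to the limit $k\to\infty$ using, according to the case, a deterministic bound on $\tau$, bounded convergence, or a uniformly integrable majorant obtained by telescoping. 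You also correctly single out the only delicate point (upgrading a.s.\ convergence of $M_{\min\{\tau,k\}}$ to convergence in mean), which is indeed where (a)--(c) are used.

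One small remark on case (b). As literally written in the theorem, the hypothesis is $\mathbb E[M_{k+1}-M_k\mid M_0,\dots,M_k]\le c$, which is automatic for a supermartingale and therefore gives no control. Your majorant
\[
|M_{\min\{\tau,k\}}|\le |M_0|+\sum_{j=0}^{\tau-1}|M_{j+1}-M_j|
\]
is integrable provided one has a bound on $\mathbb E\bigl[\,|M_{j+1}-M_j|\,\big|\,\mathcal F_j\bigr]$ (or on $|M_{j+1}-M_j|$ itself), combined with $\sum_j\mathbb P(\tau>j)=\mathbb E[\tau]<\infty$. That is the hypothesis you implicitly invoke and is the form that appears in Williams; the paper's formulation of (b) appears to have dropped the absolute value. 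Your proof is correct once (b) is read in this standard way.

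One further minor point: in case (c) you invoke that $\tau<\infty$ a.s.; this is not among the listed hypotheses, but it is needed for $M_\tau$ to even be defined and is tacitly assumed in the classical statement, so this is not a flaw in your argument. Everything else, including the reduction of the submartingale case to the supermartingale one by considering $\{-M_k\}$, is correct and complete.
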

For the proof of this classical result we refer to \cite{Doob,Williams}.

Now, we are ready to show that the game has a value.

\begin{lemma}
    Let $u^\eps$ be a solution to \eqref{eq.main.intro2}. Then $u^\eps$ coincides with the value of the game, that is, we have that
    \begin{equation} \label{value.eq.sec2}
        u^\varepsilon (x,t) = \inf_{S} \mathbb{E}_S^{(x,t)} (\mathcal{P}),
    \end{equation}
where $\mathcal{P}=\{\mbox{final payoff}\}$ is described above.
\end{lemma}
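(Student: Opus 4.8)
The plan is the standard two-sided comparison between the value function and a solution of the DPP~\eqref{eq.main.intro2}, using the Optional Stopping Theorem (Theorem~\ref{teo.OST}) on suitable super/submartingales. As a preliminary observation I would record that the game terminates after a uniformly bounded number of rounds: at each round the time coordinate strictly decreases by $b\eps^2\ge Cb^-\eps^2>0$, so, starting from $(x,t)$, the stopping time $\tau$ is bounded by the deterministic quantity $\big\lceil (t+Cb^+\eps^2)/(Cb^-\eps^2)\big\rceil$. In particular the payoff $\mathcal P$ is well defined and bounded (since $u_0$ is continuous on $\overline\Omega$ and $u^\eps$ is bounded), and condition~(a) of Theorem~\ref{teo.OST} is always at our disposal. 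Throughout I use the usual convention that after the game stops the token stays put, $(x_{k+1},t_{k+1})=(x_k,t_k)$ for $k\ge\tau$.

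For the inequality $u^\eps(x,t)\ge\inf_S\mathbb{E}_S^{(x,t)}(\mathcal P)$, I would fix an \emph{arbitrary} strategy $S$ and consider $M_k:=u^\eps(x_k,t_k)$. While the game is running, i.e.\ on $\{k<\tau\}$, we have $x_k\in\Omega$ and $t_k>0$, so \eqref{eq.main.intro2} gives, with $b_k$ the value selected by $S$ at round $k$,
\[
  u^\eps(x_k,t_k)=\inf_{b\in[Cb^-,Cb^+]}\dashint_{B_{\eps}(x_k)}u^\eps(y,t_k-b\eps^2)\,{\rm d}y\le \dashint_{B_{\eps}(x_k)}u^\eps(y,t_k-b_k\eps^2)\,{\rm d}y=\mathbb{E}_S\big[M_{k+1}\mid\mathcal F_k\big],
\]
and on $\{k\ge\tau\}$ this holds with equality. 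Thus $\{M_k\}$ is a bounded submartingale, and the OST yields $u^\eps(x,t)=M_0\le\mathbb{E}_S[M_\tau]$. At $\tau$ either $x_\tau\notin\Omega$, so $M_\tau=u^\eps(x_\tau,t_\tau)=0=\mathcal P$, or $t_\tau\le0$, so $M_\tau=u_0(x_\tau)=\mathcal P$; hence $u^\eps(x,t)\le\mathbb{E}_S(\mathcal P)$ for every $S$, and taking the infimum gives the claim.

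For the reverse inequality $u^\eps(x,t)\le\inf_S\mathbb{E}_S^{(x,t)}(\mathcal P)$, I would exhibit an \emph{almost optimal} strategy. Given $\delta>0$, at round $k$ (with $x_k\in\Omega$, $t_k>0$) let the controller pick $b_k\in[Cb^-,Cb^+]$ with
\[
  \dashint_{B_{\eps}(x_k)}u^\eps(y,t_k-b_k\eps^2)\,{\rm d}y\le\inf_{b\in[Cb^-,Cb^+]}\dashint_{B_{\eps}(x_k)}u^\eps(y,t_k-b\eps^2)\,{\rm d}y+\delta2^{-(k+1)}=u^\eps(x_k,t_k)+\delta2^{-(k+1)},
\]
the last equality being again \eqref{eq.main.intro2}. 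Then $M_k:=u^\eps(x_k,t_k)+\delta2^{-k}$ is a bounded supermartingale: on $\{k<\tau\}$,
\[
  \mathbb{E}\big[M_{k+1}\mid\mathcal F_k\big]=\dashint_{B_{\eps}(x_k)}u^\eps(y,t_k-b_k\eps^2)\,{\rm d}y+\delta2^{-(k+1)}\le u^\eps(x_k,t_k)+\delta2^{-k}=M_k,
\]
with equality on $\{k\ge\tau\}$. By the OST, $\mathbb{E}_{S_\delta}[M_\tau]\le M_0=u^\eps(x,t)+\delta$, and as before $M_\tau=\mathcal P+\delta2^{-\tau}\ge\mathcal P$, so $\inf_S\mathbb{E}_S(\mathcal P)\le\mathbb{E}_{S_\delta}(\mathcal P)\le u^\eps(x,t)+\delta$. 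Letting $\delta\to0$ and combining with the previous step yields~\eqref{value.eq.sec2}.

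The point that deserves care — and which I would treat only briefly — is the admissibility (measurability) of the almost optimal strategy $S_\delta$: one must check that $(x,t)\mapsto\inf_{b}\dashint_{B_{\eps}(x)}u^\eps(y,t-b\eps^2)\,{\rm d}y$ is measurable and that a measurably selected $\delta2^{-(k+1)}$-almost minimizer $b_k$ of the history exists, so that $S_\delta$ is a legitimate strategy. This is handled by a standard measurable selection argument, after noting that (by boundedness of $u^\eps$ and dominated convergence) the map $b\mapsto\dashint_{B_{\eps}(x)}u^\eps(y,t-b\eps^2)\,{\rm d}y$ is measurable and the infimum over the compact interval $[Cb^-,Cb^+]$ can be reduced to an infimum over a countable dense set. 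Everything else is routine bookkeeping with the stopped process, so I expect no genuine obstacle beyond this measurability check.
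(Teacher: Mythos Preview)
Your proof is correct and follows essentially the same super/submartingale approach as the paper, including the $\delta 2^{-k}$ correction for the almost-optimal strategy; your preliminary remark that $\tau$ is deterministically bounded and your note on measurable selection are useful clarifications that the paper omits. One cosmetic slip: you have swapped the labels on the two halves---the submartingale argument with an \emph{arbitrary} $S$ actually proves $u^\eps(x,t)\le\inf_S\mathbb{E}_S(\mathcal P)$, and the supermartingale argument with $S_\delta$ proves $u^\eps(x,t)\ge\inf_S\mathbb{E}_S(\mathcal P)$; the mathematics is right, only the headers are reversed.
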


\begin{proof} Let us design an strategy for the player following the solution $u^\eps(x,t)$ to \eqref{eq.main.intro2}. At each position of the game $(x_k,t_k)\in  \Omega \times (0,T)$, the player will choose $\widetilde{b_k}=S(x_k,t_k)$ according to
$$
    \begin{aligned}
        \dashint_{B_{\varepsilon} (x)} u^\varepsilon (y, t - \widetilde{b_k} \varepsilon^2)\,{\rm d}y&\geq \inf_{b\in [C b^-,C b^+]}
        \dashint_{B_{\varepsilon} (x)} u^\varepsilon (y, t - b \varepsilon^2)\,{\rm d}y \\[10pt]
        &\geq\dashint_{B_{\varepsilon} (x)} u^\varepsilon (y, t - \widetilde{b_k} \varepsilon^2)\,{\rm d}y - \frac{\eta}{2^{k+1}}.
    \end{aligned}
$$
Assume that the player always plays according to this strategy (that we call $S^*$). Define the sequence
$$
    M_k = u^\eps (x_k,t_k) + \frac{\eta}{2^{k}}
$$
and let us see that $\{M_k\}$ is a supermartingale. In fact, we have
$$
    \begin{aligned}
        \mathbb{E}(M_{k+1}|(x_k,t_k))&=\mathbb{E}(u^\eps(x_{k+1},t_{k+1})+\frac{\eta}{2^{k+1}}|(x_k,t_k))\\[10pt]
        &=\mathbb{E}(u^\varepsilon (x_{k+1}, t_{k} - \widetilde{b_{k}} \varepsilon^2)\,{\rm d}y + \frac{\eta}{2^{k+1}} |(x_k,t_k)) \\[10pt]
        &=\dashint_{B_{\varepsilon} (x_{k})} u^\varepsilon (y, t_{k} - \widetilde{b_k} \varepsilon^2)\,{\rm d}y + \frac{\eta}{2^{k+1}}  \\[10pt]
        &\leq\inf_{b\in[b^-,b^+]}\dashint_{B_{\varepsilon}(x_k)}u^\varepsilon(y,t_k-b\varepsilon^2)\,{\rm d}y+\frac{\eta}{2^{k+1}}+\frac{\eta}{2^{k+1}}\\[10pt]
        &= u^\varepsilon (x_k, t_k) + \frac{\eta}{2^{k}} = M_k.
    \end{aligned}
$$

According to the OST, we have
$$
    \mathbb{E}_{S^*} (\mathcal{P})  \geq \mathbb{E} (M_\tau) = \lim_{k\to \infty} \mathbb{E} (M_{\tau \wedge k})
    \geq M_0 = u^\eps (x_0,t_0) + \eta.
$$
Then we conclude that the value of the game verifies
$$
    \sup_{S}  \mathbb{E}_{S_1}^{(x_0,t_0)} (\mathcal{P}) \leq \mathbb{E}_{S^*}^{(x_0,t_0)} (\mathcal{P}) \leq u^{\eps } (x_0,t_0) + \eta.
$$

On the other hand, if we now consider the sequence
$$
N_k =u^\eps (x_k,t_k)
$$
we have that $\{N_k\}$ is a submartingale,
$$
    \begin{aligned}
        \mathbb{E} (N_{k+1} |(x_k,t_k))&=\mathbb{E} (u^\eps (x_{k+1},t_{k+1}) |(x_k,t_k))\\[10pt]
        &=\mathbb{E}(u^\varepsilon (x_{k+1}, t_{k} - \widetilde{b_{k}} \varepsilon^2)\,{\rm d}y  |(x_k,t_k))\\[10pt]
        &=\dashint_{B_{\varepsilon} (x_{k})} u^\varepsilon (y, t_{k} - \widetilde{b_k} \varepsilon^2)\,{\rm d}y\\[10pt]
        &\leq\inf_{b\in [C b^-,C b^+]}\dashint_{B_{\varepsilon}(x_k)}u^\varepsilon (y, t_k - b \varepsilon^2)\,{\rm d}y\\[10pt]
        &= u^\varepsilon (x_k, t_k)= N_k.
    \end{aligned}
$$

According again to the OST, we have
$$
    \mathbb{E}_{S^*} (\mathcal{P})  \geq \mathbb{E} (N_\tau) = \lim_{k\to \infty} \mathbb{E} (N_{\tau \wedge k})\geq N_0 = u^\eps (x_0,t_0).
$$
Then, we conclude that the value of the game verifies
$$
    \inf_{S}  \mathbb{E}_{S}^{(x_0,t_0)} (\mathcal{P}) \geq  u^{\eps } (x_0,t_0) .
$$
Since, $\eta>0$ is arbitrary, we conclude that
$$
    \inf_{S}  \mathbb{E}_{S}^{(x_0,t_0)} (\mathcal{P}) =  u^{\eps } (x_0,t_0)
$$
as we wanted to show.
\end{proof}

Now, we want to study the limit of $u^\eps$ as $\eps \to 0$.
To obtain a convergent subsequence we will use the following
Arzela-Ascoli type lemma. For its proof see Lemma~4.2 from \cite{MPRb}.

\begin{lemma}\label{lem.ascoli.arzela}
    Let $\{u^\eps : \overline{\Omega}\times [0,T]\to \R,\ \eps>0\}$ be a set of functions such that:
    \begin{enumerate}
        \item[\rm (i)] there exists $C>0$ such that $|u^\eps (x,t)|<C$ for every $\eps >0$ and every $(x,t)\in\overline{\Omega}\times [0,T]$;
        \item[\rm (ii)]\label{cond:2} given $\delta >0$ there are positive constants $r_0$ and $\eps_0$ such that for every $\eps<\eps_0$ and any $x, y \in \overline{\Omega}$ with $|x - y | < r_0 $, and any $s,t \in [0,T]$ with $|s - t | < r_0 $, it holds
            $$
                |u^\eps (x,t) - u^\eps (y,s)| < \delta.
            $$
    \end{enumerate}
    Then there exists a uniformly continuous function $u:\overline{\Omega}\times[0,T]\to\R$, and a subsequence, still denoted by~$\{u^\eps\}$, such that
    $$
        u^{\eps}\to u \quad \text{ as } \eps\to 0, \quad \textrm{ uniformly in } \overline{\Omega} \times[0,T].
    $$
\end{lemma}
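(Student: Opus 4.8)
The plan is to run the classical Arzel\`a--Ascoli argument, adapted to the two features that distinguish this statement from the textbook version: the functions $u^\eps$ are not assumed continuous, and the equicontinuity in (ii) is only \emph{asymptotic}, i.e.\ it holds for $\eps<\eps_0$ and only uniformly in that range. Since we are only after the limit $\eps\to0$, the second point is harmless as long as every invocation of (ii) is made for $\eps$ below the corresponding threshold. Throughout I will equip $\overline{\Omega}\times[0,T]$ with the product metric $d\big((x,t),(y,s)\big)=\max\{|x-y|,|s-t|\}$, so that (ii) reads: for every $\delta>0$ there are $r_0,\eps_0>0$ with $|u^\eps(P)-u^\eps(Q)|<\delta$ whenever $\eps<\eps_0$ and $d(P,Q)<r_0$.

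First I would fix a countable dense subset $D=\{z_j\}_{j\ge1}\subset\overline{\Omega}\times[0,T]$. By (i) the scalar sequence $\{u^\eps(z_1)\}$ is bounded, hence has a convergent subsequence; extracting successively for $z_2,z_3,\dots$ and taking a diagonal subsequence, I obtain one subsequence, still denoted $\{u^\eps\}$, along which $u^\eps(z_j)$ converges for every $j$. Define $u(z_j):=\lim_{\eps\to0}u^\eps(z_j)$ on $D$. Next I would check that $u$ is uniformly continuous on $D$: given $\delta>0$, take $r_0,\eps_0$ from (ii); for $z_i,z_j\in D$ with $d(z_i,z_j)<r_0$ and $\eps<\eps_0$ one has $|u^\eps(z_i)-u^\eps(z_j)|<\delta$, and letting $\eps\to0$ yields $|u(z_i)-u(z_j)|\le\delta$. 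Since $D$ is dense and $\overline{\Omega}\times[0,T]$ is complete (indeed compact), $u$ extends uniquely to a function on all of $\overline{\Omega}\times[0,T]$ with the same modulus of continuity; in particular the extension, still called $u$, is uniformly continuous.

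It remains to upgrade pointwise convergence on $D$ to uniform convergence on $\overline{\Omega}\times[0,T]$. Given $\delta>0$, let $r_0,\eps_0$ be associated with $\delta/3$ in (ii). By compactness, cover $\overline{\Omega}\times[0,T]$ by finitely many balls of radius $r_0$ centred at points $z_{j_1},\dots,z_{j_M}\in D$. Because only finitely many points are involved, there is $\eps_1\le\eps_0$ such that $|u^\eps(z_{j_\ell})-u(z_{j_\ell})|<\delta/3$ for all $\ell=1,\dots,M$ whenever $\eps<\eps_1$. For an arbitrary $(x,t)$, pick $z_{j_\ell}$ with $d\big((x,t),z_{j_\ell}\big)<r_0$; then, for $\eps<\eps_1$,
\[
|u^\eps(x,t)-u(x,t)|\le |u^\eps(x,t)-u^\eps(z_{j_\ell})|+|u^\eps(z_{j_\ell})-u(z_{j_\ell})|+|u(z_{j_\ell})-u(x,t)|<\tfrac{\delta}{3}+\tfrac{\delta}{3}+\tfrac{\delta}{3}=\delta,
\]
where the first term is controlled by (ii), the second by the finite pointwise convergence, and the third by the uniform continuity of $u$ (whose modulus we may take at least as good as that coming from (ii)). Hence $u^\eps\to u$ uniformly on $\overline{\Omega}\times[0,T]$, as claimed.

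The only genuinely delicate point, and the one I would be most careful about, is the bookkeeping forced by the asymptotic nature of (ii): each estimate using (ii) must be restricted to $\eps$ smaller than the threshold attached to the accuracy being requested, and the passage from the dense set $D$ to the full domain — both for defining $u$ and for the uniform convergence — relies on the compactness of $\overline{\Omega}\times[0,T]$. Everything else is routine.
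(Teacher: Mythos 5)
Your proof is correct. The paper does not actually present its own proof of this lemma; it simply refers the reader to Lemma~4.2 of the cited reference by Manfredi, Parviainen and Rossi, where a version of this asymptotic Arzel\`a--Ascoli result is established. Your argument is the standard one — diagonal extraction along a sequence $\eps_n\to 0$ to get pointwise convergence on a countable dense set $D$, passage to the limit in condition~(ii) to show uniform continuity of the limit on $D$ and hence on all of $\overline{\Omega}\times[0,T]$ by density, and finally a three-$\delta/3$ estimate combined with a finite cover to upgrade to uniform convergence — and it is the same essential mechanism as in that reference. Two small points of bookkeeping you handle correctly and which are precisely where a careless writer could slip: every use of~(ii) is restricted to $\eps$ below the threshold $\eps_0$ attached to the requested accuracy, and the $u^\eps$ are not assumed continuous, so the argument must (and does) avoid invoking continuity of the approximating functions anywhere — only the limit $u$ is shown to be continuous. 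One cosmetic remark: since $\eps$ ranges over a continuum, the phrase ``the scalar sequence $\{u^\eps(z_1)\}$'' presupposes that a sequence $\eps_n\to 0$ has already been chosen; it would be cleaner to say this explicitly before the diagonal extraction, though the intent is unambiguous.
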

Therefore, we must show now that the family $\{u^\eps\}$ of solutions to Problem \eqref{eq.main.intro2} verifies the hypothesis of the previous lemma. To this end we first observe that our proof of existence for that problem shows that $\{u^\eps\}$ is uniformly bounded.

\begin{lemma}\label{lema.bound}
    In the above notation it holds that $\|u^\eps\|_\infty \leq \|u_0\|_\infty$.
\end{lemma}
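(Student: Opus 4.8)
The plan is to propagate the bound through the iterative construction of $u^\eps$ used in the existence lemma, with the constant functions $\pm\|u_0\|_\infty$ playing the role of a super- and a subsolution of~\eqref{eq.main.intro2}. Write $M=\|u_0\|_\infty$ and recall that $u_0\big|_{\partial\Omega}=0$, so that $|u_0|\le M$ on $\overline\Omega$ and the prescribed exterior value $0$ also lies in $[-M,M]$.

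First I would prove, by induction on $n$, that $-M\le u_n^\eps\le M$ on the whole region where $u_n^\eps$ is defined. The case $n=1$ is immediate since $u_1^\eps=\min_{\overline\Omega}u_0\in[-M,M]$. For the inductive step: on $D_\eps\times(0,+\infty)$ one has $u_{n+1}^\eps=0\in[-M,M]$, and on $\Omega\times[-Cb^+\eps^2,0]$ one has $u_{n+1}^\eps=u_0\in[-M,M]$; while on $\Omega\times(0,+\infty)$, since $B_\eps(x)\subset\Omega\cup D_\eps$ for $x\in\Omega$ and the shifted times $t-b\eps^2$ with $b\in[Cb^-,Cb^+]$ stay in $[-Cb^+\eps^2,+\infty)$, the induction hypothesis gives
$$
  -M=\inf_{b\in[Cb^-,Cb^+]}\dashint_{B_\eps(x)}(-M)\,{\rm d}y\le u_{n+1}^\eps(x,t)\le \inf_{b\in[Cb^-,Cb^+]}\dashint_{B_\eps(x)}M\,{\rm d}y=M.
$$
Passing to the limit $n\to\infty$ and using that $u^\eps=\lim_n u_n^\eps$ (the monotone limit constructed in the existence lemma) then yields $\|u^\eps\|_\infty\le M$.

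Alternatively, and perhaps more transparently, the estimate is immediate from the game representation $u^\eps(x,t)=\inf_S\mathbb{E}_S^{(x,t)}(\mathcal{P})$ established above: since each round lowers the time by at least $Cb^-\eps^2$, the game stops after at most $\lceil t/(Cb^-\eps^2)\rceil$ rounds, the final payoff $\mathcal{P}$ equals either $0$ or $u_0(x_\tau)$ with $x_\tau\in\overline\Omega$, hence $|\mathcal{P}|\le M$ almost surely, so $|\mathbb{E}_S^{(x,t)}(\mathcal{P})|\le M$ for every strategy $S$, and therefore $|u^\eps(x,t)|\le M$.

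I do not expect any genuine difficulty here: the only points requiring a little care are that for $x\in\Omega$ the averaging ball $B_\eps(x)$ only meets $\Omega\cup D_\eps$, where the inductive bound already holds, and that the shifted times $t-b\eps^2$ never leave the slab $[-Cb^+\eps^2,+\infty)$ on which $u_n^\eps$ is prescribed or already controlled, so that the comparison with the constants $\pm M$ is legitimate at every step.
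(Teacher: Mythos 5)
Your induction through the iterates $u_n^\eps$ is precisely the argument the paper intends: the lemma is stated as a byproduct of the existence construction, where a bound on the $u_n^\eps$ is asserted (with a small slip, ``$u_n^\eps\le u_1^\eps$'', which cannot be right since the sequence increases from $u_1^\eps=\min u_0$; the correct statement is the two-sided bound by $\pm\|u_0\|_\infty$ you prove). Your alternative via the game representation, bounding the payoff $\mathcal{P}$ almost surely by $\|u_0\|_\infty$, is also correct and is a clean complementary way to see the same estimate.
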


To show the second condition of Lemma \ref{lem.ascoli.arzela} we need some estimates for the random walk.
\subsection{Estimates for the Random Walk game}

Now we assume that we are playing with the random walk game inside $\Omega$ with a stopping rule that ends the game the first time exiting $\Omega$.

In the next result we show that when we start playing this random walk game sufficiently close to a boundary point, then we exit the domain close to that point with high probability and also the number of rounds needed to exit is relatively small with high probability (when we look at the
appropriate scale that in this case is of order $1/\varepsilon^2$).

\begin{lemma} \label{lema.4.5.}
    Given $\eta>0$ and $a>0$, there exists $r_{0}>0$ and $\eps_{0}>0$ such that, given $y\in \partial\Omega$ and $x_{0}\in\Omega$ with $|x_{0}-y|<r_{0}$, if we play random in $\Omega$ (choosing the next position of the game uniformly in $B_\varepsilon (x)$) we obtain
    $$
        \mathbb{P} \Big(|x_{\tau}-y|< a \Big) \geq 1 - \eta
    $$
    for every $\eps<\eps_{0}$ and $x_{\tau}$ the first position outside $\Omega$.

    Moreover, if we count the number of rounds, $\tau$, we can also get
    $$
        \mathbb{P} \Big(\tau \geq \frac{a}{2\eps^2} \Big)< \eta.
    $$
\end{lemma}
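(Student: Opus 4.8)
\emph{Proof strategy.} I would work entirely with the two elementary martingales attached to the random walk $\{x_k\}$, stopped and then frozen at the first exit time $\tau$: each coordinate of $x_k$ is a bounded martingale, and $|x_{k\wedge\tau}-x_0|^2-(k\wedge\tau)\sigma_\varepsilon^2$ is a martingale, where $\sigma_\varepsilon^2:=\tfrac{N}{N+2}\varepsilon^2$ is the second moment of the uniform law on $B_\varepsilon$. A routine block argument (in every block of $\sim(\operatorname{diam}\Omega)^2/\varepsilon^2$ rounds the walk exits with probability at least $1/2$, uniformly in the starting point) gives $\mathbb E[\tau]<\infty$, so the Optional Stopping Theorem (Theorem~\ref{teo.OST}, via conditions (b)--(c)) is available for all the martingales below. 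The extra ingredient is the exterior ball condition at $y$, valid since $\partial\Omega$ is smooth: there is $B_\rho(z)$ with $B_\rho(z)\cap\Omega=\emptyset$ and $|y-z|=\rho$, so that $R_k:=|x_k-z|\ge\rho$ while $x_k\in\Omega$, $R_{k\wedge\tau}^2-(k\wedge\tau)\sigma_\varepsilon^2$ is again a martingale, and the harmonic exterior-ball barrier $h$ --- equal to $\rho^{2-N}-|x-z|^{2-N}$ for $N\ge3$, to $\log(|x-z|/\rho)$ for $N=2$, and to $|x-z|-\rho$ for $N=1$ --- is nonnegative on $\overline\Omega$, vanishes at $y$, is increasing in $|x-z|$, and (being harmonic off $z$, and $z$ lying far from every $B_\varepsilon(x_k)$) $h(x_{k\wedge\theta})$ is an \emph{exact} martingale for any stopping time $\theta\le\tau$.

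The crux is a truncated stopping time. Fix $\eta,a>0$; we may assume $a\le\min\{1,\rho\}$, since proving the statement for smaller $a$ implies it for larger $a$. Set $L:=c\,\eta a^2$ with $c=c(\Omega,N)$ small (to be fixed), put $\sigma:=\min\{k:R_k\ge\rho+L\}$ and $\theta:=\tau\wedge\sigma$. Two applications of OST give the control I need. First, using $R_0\in[\rho,\rho+r_0]$ and $R_\theta<\rho+L+\varepsilon$,
\[
\mathbb E[\theta]=\sigma_\varepsilon^{-2}\bigl(\mathbb E[R_\theta^2]-R_0^2\bigr)\le \frac{C_\rho L}{\varepsilon^2}\qquad(\varepsilon\le L),
\]
so $\theta$ uses very few rounds on average. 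Second, testing with the martingale $h(x_{k\wedge\theta})$ and using that on $\{\sigma<\tau\}$ one has $h(x_\theta)=h(x_\sigma)\ge h_L$, where $h_L>0$ is the constant value of $h$ on $\{|x-z|=\rho+L\}$, while always $h(x_\theta)\ge-C\varepsilon$ on $\overline\Omega+B_\varepsilon$,
\[
\mathbb P(\sigma<\tau)\le \frac{h(x_0)+C\varepsilon}{h_L};
\]
since $h$ is continuous and $h(y)=0$, the right-hand side tends to $0$ as $r_0,\varepsilon\to0$, so $\mathbb P(\sigma<\tau)$ can be made as small as we wish by choosing $r_0$ and $\varepsilon_0$ small.

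From here both assertions follow by bookkeeping. For the number of rounds: with $T_1:=4C_\rho L/(\eta\varepsilon^2)$ one gets $\mathbb P(\tau>T_1)\le\mathbb P(\sigma<\tau)+\mathbb E[\theta]/T_1<\eta$, and $L=c\eta a^2\le\eta a/(8C_\rho)$ forces $T_1<a/(2\varepsilon^2)$, hence $\mathbb P\bigl(\tau\ge a/(2\varepsilon^2)\bigr)<\eta$. For the exit location: $\mathbb E[|x_\tau-x_0|^2]=\sigma_\varepsilon^2\mathbb E[\tau]$ by OST, and splitting over $\{\theta=\tau\}$ and $\{\sigma<\tau\}$,
\[
\mathbb E[|x_\tau-x_0|^2]\le \sigma_\varepsilon^2\mathbb E[\theta]+(\operatorname{diam}\Omega+1)^2\,\mathbb P(\sigma<\tau)\le \frac{\eta a^2}{8}
\]
once $c$ is fixed so that $C_\rho L\le\eta a^2/16$ and $r_0,\varepsilon_0$ so that $(\operatorname{diam}\Omega+1)^2\mathbb P(\sigma<\tau)\le\eta a^2/16$. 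Doob's maximal inequality for the nonnegative submartingale $|x_{k\wedge\tau}-x_0|^2$ then yields $\mathbb P(|x_\tau-x_0|\ge a/2)\le \mathbb E[|x_\tau-x_0|^2]/(a/2)^2\le\eta/2$, and since $|x_\tau-y|\le|x_\tau-x_0|+r_0$ we conclude $\mathbb P(|x_\tau-y|\ge a)<\eta$ provided also $r_0<a/2$; taking the intersection of all the smallness requirements fixes $r_0$ and $\varepsilon_0$.

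The main obstacle is structural rather than computational: there is \emph{no} genuine barrier proof of ``the walk exits near $y$'', because a superharmonic function cannot vanish at an interior point while staying positive around it, and the exterior-ball barrier $h$ is flat along the entire portion of $\partial\Omega$ tangent to $\partial B_\rho(z)$ --- on an annulus, for instance, it never detects an escape to the far side. One is therefore forced into the probabilistic route above, whose delicate point is that $\mathbb E[\tau]$ itself is \emph{not} controlled by $r_0$ (the walk may still exit through a distant part of $\partial\Omega$); this is exactly why the truncated time $\theta=\tau\wedge\sigma$ is indispensable --- it has small expectation, and the harmonic-barrier estimate makes the event $\{\sigma<\tau\}$ that the truncation is active negligible. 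Everything else amounts to choosing the constants $L\sim\eta a^2$, $T_1$, $r_0$, $\varepsilon_0$ so that the two conclusions close simultaneously, which works precisely because we may assume $a$ small.
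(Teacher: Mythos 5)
Your proof is correct and relies on the same core ingredient as the paper's --- the exterior-ball harmonic barrier and the Optional Stopping Theorem --- but you organize the estimates quite differently. The paper works with the barrier $\mu$ (your $h$) directly and only: OST for the martingale $\mu(x_k)$ gives $\mathbb{E}[\mu(x_\tau)]=\mu(x_0)$; Markov's inequality then gives $\mathbb{P}(\mu(x_\tau)\geq b)\leq\mu(x_0)/b$; and the geometric fact that $\mu(x_\tau)<b$ together with $d(x_\tau,\Omega)<\varepsilon_0$ forces $|x_\tau-y|<a$ yields the spatial estimate. For the time bound the paper uses the submartingale $\mu(x_k)^2 - k\sigma\varepsilon^2$ (the inequality $\mathbb{E}[\mu(x_{k+1})^2-\mu(x_k)^2\,|\,x_k]\geq\sigma(\Omega)\varepsilon^2$ following from Taylor expansion and $|\nabla\mu|$ being bounded below on $\overline\Omega$), together with the already-obtained smallness of $\mathbb{E}[\mu(x_\tau)^2]$. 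You instead introduce the truncated stopping time $\theta=\tau\wedge\sigma$, estimate $\mathbb{E}[\theta]$ from the radial second-moment martingale $R_{k\wedge\theta}^2-(k\wedge\theta)\sigma_\varepsilon^2$, control $\mathbb{P}(\sigma<\tau)$ from the $h$-martingale, and derive the spatial estimate from the displacement martingale $|x_{k\wedge\tau}-x_0|^2-(k\wedge\tau)\sigma_\varepsilon^2$ plus Chebyshev. Both routes are valid; yours makes the tracking of constants more explicit at the cost of the truncation, while the paper's is shorter once the geometric step is accepted.

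However, your stated justification for departing from the barrier approach is incorrect. The paper does not use the maximal exterior ball; it explicitly takes a ball of small radius $\theta<\theta_0$ (the uniform exterior-ball radius of the smooth domain) with $\overline{B_\theta(z)}\cap\overline\Omega=\{y\}$, i.e., tangency at a \emph{single} point. Then $\mu$ vanishes only at $y$ on $\overline\Omega$, and by compactness $\mu$ is bounded below by a positive constant $b$ on $\{x:d(x,\overline\Omega)\leq\varepsilon_0,\ |x-y|\geq a\}$, so $\mathbb{P}(\mu(x_\tau)\geq b)$ \emph{does} control the probability of exiting far from $y$. On the annulus one simply chooses the exterior ball strictly smaller than the inner hole; the flatness along a whole boundary component occurs only for the inscribed ball, which is not what the paper uses. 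So there is a barrier proof of ``the walk exits near $y$,'' and it is the one in the paper; your alternative is a fine variant, but it is not forced.
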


\begin{proof}
We include only a sketch of the proof. Assume that $N\geq3$ (the cases $N=1,2$ are similar). The first step is, given $\theta<\theta_{0}$, and $y\in\Omega$, we are going to assume that we have $\overline{B_{\theta}(0)}\cap\overline{\Omega}=\{y\}$ (since $\Omega$ is smooth it satisfies a uniform exterior ball condition). We define the set $\Omega_{\eps}=\{x\in\R^{N}:d(x,\Omega)<\eps\}$ for $\eps$ small enough. Now, we consider the function $\mu:\Omega_{\eps}\rightarrow\R$ given by
\begin{equation}\label{mu}
    \mu(x)=\frac{1}{\theta^{N-2}}-\frac{1}{|x |^{N-2}}.
\end{equation}
This function is positive in $\overline{\Omega}\backslash\{y\}$, radially increasing and harmonic in $\Omega$. It also holds that $\mu(y)=0$.

Take the first position of the game, $x_{0}\in\Omega $, such that $|x_{0}-y|<r_{0}$ with $r_{0}$ to be chosen later. Let $ \{x_{k}\}_{k \geq 0} $ be the sequence of positions of the game playing random walk and consider the sequence of random variables
$$
    N_{k}=\mu(x_{k})
$$
for $k\geq 0$. Since $\mu$ is harmonic, we have that $\{N_{k}\}$ is a martingale,
$$
    \mathbb{E} [N_{k + 1} \arrowvert N_{k}] = \dashint_{B _{\eps} (x_{k})} \mu(y) dy = \mu (x_{k}) = N_{k}.
$$
Since $ \mu $ is bounded in $\Omega$, the third hypothesis of the OST is fulfilled, hence we obtain
\begin{equation}\label{muxo}
    \mathbb{E} [\mu(x_{\tau})] = \mu(x_ {0}).
\end{equation}
We have the following estimate for $\mu(x_{0})$: there exists a constant $c(\Omega,\theta)>0$ such that
$$
    \mu(x_0)\leq c(\Omega,\theta)r_0.
$$
Now we need to establish a relation between $\mu(x_{\tau})$ and $|x_{\tau}-y|$. To this end, we take the function $b:[\theta,+\infty)\rightarrow\R$ given by
\begin{equation}\label{funb}
    b(\overline{a})=\frac{1}{\theta^{N-2}}-\frac{1}{\overline{a}^{N-2}}.
\end{equation}
Note that this function is the radial version of $ \mu $. It is positive and increasing, so it has an inverse (also increasing) that is given by the formula
$$
    \overline{a}(b)=\frac{\theta}{(1-\theta^{N-2}b)^{\frac{1}{N-2}}}.
$$
With this function we can get the following result: given $a>0$, there exist $\overline{a}>\theta$, $b>0$ and $\eps_{0}>0$ such that
$$
    \mu(x_{\tau})<b \;\Longrightarrow\; |x_{\tau}-y|<a \ , \ d(x_{\tau},\Omega)<\eps_{0}.
$$
Then we have $\mathbb{P}(\mu(x_{\tau})\geq b)\geq \mathbb{P}(|x_{\tau}-y|\geq a)$, and we obtain
\begin{equation}\label{desnorma}
    \mathbb{P}(|x_{\tau}-y|\geq a)<\eta
\end{equation}
if $r_0$ is small. Now let us compute
\begin{equation}\label{ENK}
    \mathbb{E}[N_{k+1}^{2}-N_{k}^{2}\arrowvert N_{k}]=
    \dashint_{B_{\eps}(x_{k})}(\mu^{2}(w)-\mu^{2}(x_{k})) \,{\rm d}w.
\end{equation}
Using a Taylor expansion of order two we can prove that
$$
    \mathbb{E}[N_{k+1}^{2}-N_{k}^{2}\arrowvert N_{k}]\geq \sigma(\Omega)\eps^{2}.
$$
Then, with arguments similar to those used above, we obtain
$$
    \mathbb{P} \Big(\tau\geq\frac{a}{2\eps^{2}}\Big)<\eta
$$
for $r_{0}$ small enough.
\end{proof}

Now we are ready to prove the second condition in the Arzela-Ascoli type lemma.

\begin{lemma}\label{lem.ascoli.arzela.asymp} Given $\delta>0$ there are
	$r_0>0$ and $\eps_0>0$ such that for every $0< \eps < \eps_0$
	and any $x, y \in \overline{\Omega}$ with $|x - y | < r_0 $ and $|t-s|< r_0$
	it holds
	$$
	|u^\eps (x,t) - u^\eps (y,s)| < \delta.
	$$
\end{lemma}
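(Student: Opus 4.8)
The plan is to obtain the joint modulus of continuity from three ingredients: (i) $u^\varepsilon$ is uniformly close to $0$ when $x$ is near $\partial\Omega$; (ii) $u^\varepsilon(\cdot,t)$ is uniformly close to $u_0$ when $t$ is near $0$; and (iii) an interior estimate obtained by translating $u^\varepsilon$ in $(x,t)$ and comparing. For (iii) I will use a comparison principle for Problem~\eqref{eq.main.intro2}: if $\underline v(z,r)\le\inf_{b\in[Cb^-,Cb^+]}\dashint_{B_\varepsilon(z)}\underline v(w,r-b\varepsilon^2)\,{\rm d}w$ and $\overline v(z,r)\ge\inf_{b\in[Cb^-,Cb^+]}\dashint_{B_\varepsilon(z)}\overline v(w,r-b\varepsilon^2)\,{\rm d}w$ for all $(z,r)\in\Omega\times(0,\infty)$, and $\underline v\le\overline v$ on the prescribed set $(D_\varepsilon\times(-Cb^+\varepsilon^2,\infty))\cup(\Omega\times[-Cb^+\varepsilon^2,0])$, then $\underline v\le\overline v$ everywhere. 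This is immediate by induction on the slices $\{r\le nCb^-\varepsilon^2\}$, since $r-b\varepsilon^2\le r-Cb^-\varepsilon^2$ and both $\inf_b$ and $\dashint$ are monotone; the same holds with $\Omega$ replaced by any bounded open set, and applied to $u^\varepsilon$ against a translate of itself it reduces (iii) to a bound on a parabolic boundary.

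For (i), let $\omega_0$ be a modulus of continuity of $u_0$, so that $|u_0|\le\omega_0(d(\cdot,\partial\Omega))$. Given $\eta,a>0$, take $r_0,\varepsilon_0$ from Lemma~\ref{lema.4.5.}: if $d(x,\partial\Omega)<r_0$, then playing \emph{any} strategy the token exits $\Omega$ within $a$ of $\partial\Omega$ after at most $a/(2\varepsilon^2)$ rounds with probability at least $1-2\eta$, hence spending at most $Cb^+a/2$ units of time; so for $t>Cb^+a/2$ the game ends by leaving $\Omega$ with payoff $0$ on that event, and the complement contributes at most $\|u_0\|_\infty$, giving $|u^\varepsilon(x,t)|\le2\eta\|u_0\|_\infty$ after taking the infimum over strategies. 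For (ii) — which in particular settles the remaining range $t\le Cb^+a/2$ of (i) — note that before the time expires at most $\lceil t/(Cb^-\varepsilon^2)\rceil$ rounds are played; by Doob's $L^2$ maximal inequality applied to the martingale $x_k-x_0$ (whose increments have conditional second moment $\tfrac{N}{N+2}\varepsilon^2$), the token stays within $B_\rho(x)$ with probability at least $1-c_N(t+\varepsilon^2)/\rho^2$, and taking $\rho=t^{1/3}$ and distinguishing ``times out near $x$'' from ``left $\Omega$, so $x$ is within $\rho+\varepsilon$ of $\partial\Omega$'' yields $|u^\varepsilon(x,t)-u_0(x)|\le\omega_0(t^{1/3}+\varepsilon)+c_N\|u_0\|_\infty\,t^{1/3}$ for $\varepsilon$ small, uniformly in $x\in\overline\Omega$.

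Now, given $\delta>0$, choose (in this order) $\eta$, then $a$, then $\kappa=\kappa(\delta)$, and finally $r_0\le\kappa$ and $\varepsilon_0$ so small that (i) and (ii) each produce an error $<\delta/8$ whenever $t,r_0\le\kappa$, and $\omega_0(r_0)<\delta/8$. Let $|x-y|<r_0$, $|t-s|<r_0$, WLOG $t\ge s$. If $d(x,\partial\Omega)<2\kappa$ (then $d(y,\partial\Omega)<3\kappa$) or $t<2\kappa$ (then $s<2\kappa$), the inequality $|u^\varepsilon(x,t)-u^\varepsilon(y,s)|<\delta$ follows directly from (i)--(ii): in the first case both values are small, in the second insert $u_0(x)$ and $u_0(y)$ and use $|u_0(x)-u_0(y)|\le\omega_0(r_0)$. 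Otherwise $d(x,\partial\Omega)\ge2\kappa$ and $t,s\ge2\kappa$: put $\tilde u(z,r):=u^\varepsilon\big(z+(x-y),\,r+(t-s)\big)$, a solution of the DPP on $(\Omega-(x-y))\times(-(t-s),\infty)$, and compare $u^\varepsilon$ with $\tilde u$ on $\big(\Omega\cap(\Omega-(x-y))\big)\times(0,\infty)$. On the $\varepsilon$-collar of that set every point is within $\varepsilon+r_0$ of $\partial\Omega$ (because $d(\cdot,\partial(\Omega-(x-y)))=d(\cdot+(x-y),\partial\Omega)$), so by (i) both $u^\varepsilon$ and $\tilde u$ are $<\delta/8$ there; on the slices $r\in[-Cb^+\varepsilon^2,0]$ we have $u^\varepsilon=u_0(z)$ and, since $\tilde u$ is $u^\varepsilon$ at a time in $(-Cb^+\varepsilon^2,r_0]$, by (ii) $|\tilde u-u_0(z+(x-y))|<\delta/8$, hence $|u^\varepsilon-\tilde u|<\delta/4+\omega_0(r_0)<\delta/2$ there too. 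The comparison principle propagates the bound $|u^\varepsilon-\tilde u|<\delta/2$ to the whole region, and as $y$ belongs to it, $|u^\varepsilon(x,t)-u^\varepsilon(y,s)|=|\tilde u(y,s)-u^\varepsilon(y,s)|<\delta$.

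The delicate point is step (iii): because the translated domain does not coincide with $\Omega$, the two DPPs only overlap on $\Omega\cap(\Omega-(x-y))$, and one has to check carefully that the $\varepsilon$-collar of this set lies within $O(\varepsilon+r_0)$ of $\partial\Omega$ — so that the boundary estimate (i), which is the real technical content and relies on Lemma~\ref{lema.4.5.}, applies there uniformly in $\varepsilon$ — and that the time shift $t-s$ only drags the ``initial slice'' of $\tilde u$ into the thin layer $\{0<r\le r_0\}$ governed by (ii). The remaining pieces — the DPP comparison principle, the bound $\|u^\varepsilon\|_\infty\le\|u_0\|_\infty$ of Lemma~\ref{lema.bound}, and the martingale maximal inequality — are routine, and since all error terms go to $0$ as $r_0,\varepsilon\to0$ uniformly in the other data, this is precisely hypothesis~(ii) of Lemma~\ref{lem.ascoli.arzela}. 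A more probabilistic alternative to (iii) is to couple two copies of the random walk driven by the same increments, stop at the first round at which either game ends, and use (i)--(ii) to bound the value of the surviving game at the stopping configuration; the bookkeeping is heavier but the ingredients are the same.
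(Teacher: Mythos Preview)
Your proof is correct and reaches the same conclusion, but the architecture differs from the paper's in an interesting way.  The paper argues entirely probabilistically: after establishing (exactly as you do, via Lemma~\ref{lema.4.5.}) that $|u^\varepsilon(x_0,t_0)-G(y,s)|$ is small when $(y,s)$ lies on the parabolic boundary and $(x_0,t_0)$ is nearby, it handles two interior points by \emph{coupling} --- it runs both games with the same random increments and the same strategy, so $|x_i-z_i|$ and $|t_i-s_i|$ stay constant until one trajectory reaches the parabolic boundary, and then invokes the boundary estimate at that stopping configuration.  You replace this last step with a PDE-style translation-and-comparison argument: you shift $u^\varepsilon$ by the vector $(x-y,t-s)$, verify via (i)--(ii) that the shift differs from $u^\varepsilon$ by at most $\delta/2$ on the $\varepsilon$-collar and initial slab of the intersection domain $\Omega\cap(\Omega-(x-y))$, and propagate this by the discrete comparison principle you prove by induction on time slices.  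Your final paragraph is in fact a sketch of the paper's coupling proof.

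What each approach buys: the coupling route is lighter --- there is no need to worry about the geometry of the intersection domain or to invoke a comparison principle for the DPP, and in particular it requires no separate ``small-time'' estimate like your (ii) (the paper never proves an analogue of your martingale maximal-inequality bound; it folds the temporal and spatial boundary into a single uniformly continuous datum $G$ and handles both at once).  Your route, on the other hand, is closer to the standard continuous-PDE template and makes the dependence on the data more transparent.  Two small points to tighten: in (ii) the choice $\rho=t^{1/3}$ does not give a bound uniform in $\varepsilon$ when $t\ll\varepsilon^2$ --- pick $\rho$ first, then $\kappa$ and $\varepsilon_0$ with $\kappa+\varepsilon_0^2\ll\rho^2$; and in (iii) the collar statement should read that for each $z$ in the collar at least one of $z$ or $z+(x-y)$ lies within $r_0$ of $\partial\Omega$ (or outside $\Omega$, where the value is $0$), which is exactly what is needed to bound both $u^\varepsilon$ and $\tilde u$ there.
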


\begin{proof}
Recall that $u^{\eps}$ is the value of the game. Let $\Omega_T=\Omega\times[0,T]$ and consider also the parabolic boundary $\partial_p \Omega_T=(\Omega\times\{0\})\cup(\partial\Omega\times[0,T])$.
	
We start with two close points $(x,t)$ and $(y,s)$. Let $G:[(\R^N\backslash\Omega\times (0,T])\cup (\R^N\times\{0\})]\rightarrow\R$ be given by
$$
    G(x,t) =
    \begin{cases}
        0 &\mbox{if }t\geq -1,\ x\notin\Omega,  \\
        \displaystyle u_0(x) &\mbox{if }t<0,\ x\in\Omega.
    \end{cases}
$$
From our conditions on the data, the function $G$ is well defined and is uniformly continuous in both variables, that is,
$$
    |G(x,t)-G(y,s)|\leq L(|x-y|+|t-s|),
$$
for some uniform modulus of continuity $L(\cdot)$. Therefore, if $(y,s)\in \partial_p \Omega_T$ and $(x,t)\in\partial_p \Omega_T$ we get that
$$
    |u^\eps (x,t) - u^\eps (y,s)|= |G (x) - G (y)| < \delta,
$$
when $|x-y| <r_0$.

Now, we want to obtain a similar bound when $(y,s)\in \partial_p \Omega_T$ and $(x,t)\in\Omega \times (0,T)$ are close.

Fix a strategy $S$ for the player. Given $\eta >0$ and $a>0$, we have $r_{0}$ and $\eps_{0}$ from Lemma~\ref{lema.4.5.}. Let us consider the event
$$
    F=\Big\{ \tau < \lceil \frac{a}{2\eps^{2}}\rceil \Big\}.
$$

We consider two cases.
	
\noindent\textbf{1st case:} We are going to show that
$$
    u^{\eps}(x_{0},t_0)-G(y,s) \geq  - A(a,\eta),
$$
with $A(a,\eta)\searrow 0$ if $a\rightarrow 0$ and $\eta\rightarrow 0$. We have
$$
    u^{\eps}(x_{0},t_0) = \inf_{S}\mathbb{E}^{(x_{0},t_0)}_{S}[G(x_{\tau},t_{\tau})].
$$
Now, for any fixed strategy $S$, we obtain
$$
    \begin{aligned}
        \displaystyle\mathbb{E}^{(x_{0},t_0)}_{S}[G (x_{\tau},t_{\tau})]&\displaystyle=\mathbb{E}^{(x_{0},t_0)}_{S}[G(x_{\tau},t_{\tau})\arrowvert F]\mathbb{P}(F)+\mathbb{E}^{(x_{0},t_0)}_{S}[G(x_{\tau},t_{\tau})\arrowvert F^{c}]\mathbb{P}(F^{c})\\[10pt]
        & \displaystyle \geq \mathbb{E}^{(x_{0},t_0)}_{S}[G(x_{\tau},t_{\tau})\arrowvert F]\mathbb{P}(F) - \max\{|G|\}\mathbb{P}(F^{c}).
    \end{aligned}
$$
We have an estimate for $\mathbb{P}(F^{c})$,
$$
    \mathbb{P}(F^{c})\leq \mathbb{P} \Big(\tau\geq\lceil \frac{a}{2\eps^{2}}\rceil \Big).
$$
In fact, we observe that using Lemma \ref{lema.4.5.} we get
\begin{equation}\label{Ac2}
    \mathbb{P}\Big(\tau \geq \frac{a}{2\eps^{2}}\Big) \leq \mathbb{P}\Big(\tau \geq \frac{a}{2\eps_{0}^{2}}\Big)< \eta,
\end{equation}
for $\eps < \eps_{0}$, that is,
$$
    \mathbb{P}(F^{c})\leq \eta
$$
and hence
$$
    \mathbb{P}(F) =1-\mathbb{P}(F^{c}) \geq 1-\eta.
$$
Then we obtain
\begin{equation}\label{arriba}
    \displaystyle  \mathbb{E}^{(x_{0},t_0)}_{S}[G(x_{\tau},t_{\tau})]\geq \mathbb{E}^{(x_{0},t_0)}_{S}[G(x_{\tau},t_{\tau})\arrowvert F] (1-\eta)
    - \max\{lG|\}\eta.
\end{equation}

Let us analyze the expected value $\mathbb{E}^{(x_{0},t_0)}_{S}[G(x_{\tau},t_{\tau})\arrowvert F]$. Again we need to consider two events,
$$
    F_{1}=F\cap [\{ |x_{\tau}-y|< a\}\cap\{ |t_{\tau}-s|<a\}] \quad \mbox{and } F_{2}=F\cap F_{1}^{c}.
$$
We have that $ F=F_{1}\cup F_{2}$. Then
\begin{equation}\label{retomo}
    \begin{aligned}
        \displaystyle\mathbb{E}^{(x_{0},t_0)}_{S}[G(x_{\tau},t_{\tau})\arrowvert F]&=\mathbb{E}^{(x_{0},t_0)}_{S}[G(x_{\tau},t_{\tau})\arrowvert F_{1}]\mathbb{P}(F_{1}) \\[10pt]
        &\quad \displaystyle +\mathbb{E}^{(x_{0},t_0)}_{S}[G(x_{\tau},t_{\tau})\arrowvert F_{2}]\mathbb{P}(F_{2}).
    \end{aligned}
\end{equation}
Now we have that
\begin{equation}\label{a2}
    \begin{aligned}
        \mathbb{P}(F_{2})&\leq \mathbb{P}([\{ |x_{\tau}-y|< a\}\cap\{ |t_{\tau}-s|<a\}]^c)\\[10pt]
        &= \mathbb{P}(\{ |x_{\tau}-y|\geq a\}\cup \{ |t_{\tau}-s|\geq a\}) \\[10pt]
        &\leq \mathbb{P}(|x_{\tau}-y|\geq a)+\mathbb{P}(|t_{\tau}-s|\geq a)<2\eta.
    \end{aligned}
\end{equation}

To get a bound for the other case we observe that
$$
    F_{1}^{c}=F^{c}\cup\{ |x_{\tau}-y|\geq a\}\cup \{|t_{\tau}-s|\geq a\},
$$
and therefore
$$
    \mathbb{P}(F_{1})=1-\mathbb{P}(F_{1}^{c})\geq 1-[\mathbb{P}(F^{c})+\mathbb{P}(|x_{\tau}-y|\geq a)+\mathbb{P}(|t_{\tau}-s|\geq a)],
$$
so we arrive to
\begin{equation}\label{a1}
    \mathbb{P}(F_{1})\geq 1-3\eta.
\end{equation}

If we go back to \eqref{retomo} and use \eqref{a1} and \eqref{a2} we get
\begin{equation}\label{retomo2}
    \mathbb{E}^{x_{0}}_{S}[G(x_{\tau},t_{\tau})\arrowvert F]\geq\mathbb{E}^{x_{0}}_{S}[G(x_{\tau},t_{\tau})\arrowvert F_{1}] (1-3\eta)- \max \{|G|\}2\eta .
\end{equation}
Using that $G$ is uniformly continuous (recall that we assume that $u_0$ is continuous with $u_0|_{\partial \Omega}=0$) we obtain
$$
    G(x_{\tau},t_{\tau})\geq G(y,s)-L(|x_{\tau}-y|+|t_{\tau}-s|)\geq G(y,s)-L(2a),
$$
and then, using that $(G(y,s) - L(2a))$ does not depend on the strategies, we conclude that
\begin{equation*}
    \mathbb{E}^{x_{0}}_{S}[G(x_{\tau},t_{\tau})\arrowvert F]\geq G(y,s) - L(2a)  - \max \{lG|\}5\eta.
\end{equation*}
Recalling \eqref{arriba} we obtain
$$
    \mathbb{E}^{x_{0}}_{S}[G ( x_{\tau},t_{\tau})]  \geq G(y,s)- L(2a)  - \max \{lG|\}5\eta.
$$
Notice that when $\eta \rightarrow 0$ and $a\rightarrow 0$ the the right hand side goes to $G(y,s)$, hence we have obtained
$$
    \mathbb{E}^{x_{0}}_{S}[G( x_{\tau},t_{\tau})]\geq G(y,s) - A(a,\eta).
$$
Taking the infimum over all possible strategies $S$ we get
$$
    u^{\eps}(x_{0},t_0)\geq G(y,s) - A(a,\eta)
$$
with $A(a,\eta)\to 0$ as $\eta\rightarrow 0$ and $a\rightarrow 0$ as we wanted to show.

\noindent\textbf{2nd case:} Now we want to show that $$u^{\eps}(x_{0},t_0)-G(y,s)\leq  B(a,\eta),$$ with $B(a,\eta)\searrow 0$ as $\eta\rightarrow 0$ and $a\rightarrow 0$. In this case we just use the strategy $S^*$ given by $b=C b^-$ at every step as the strategy for the player,
$$
    u^{\eps}(x_{0},t_0)\leq \mathbb{E}^{x_{0},t_0}_{S^{*}}[G(x_{\tau},t_{\tau})].
$$
Using again the set $F$ that we considered in the previous case we obtain
$$
    \mathbb{E}^{x_{0},t_0}_{S^{*}}[ G( x_{\tau},t_{\tau})]= \mathbb{E}^{x_{0},t_0}_{S^{*}}[G( x_{\tau},t_{\tau})\arrowvert F]\mathbb{P}(F)+
    \mathbb{E}^{x_{0}}_{S^{*}}[G( x_{\tau},t_{\tau})\arrowvert F^{c}]\mathbb{P}(F^{c}).
$$
We have that $\mathbb{P}(F) \leq 1$ and $\mathbb{P}(F^{c})\leq 2\eta $. Hence we get
\begin{equation}\label{retomo3}
    \mathbb{E}^{x_{0},t_0}_{S^{*}}[G( x_{\tau},t_{\tau})]\leq \mathbb{E}^{x_{0},t_0}_{S^{*}}[G( x_{\tau},t_{\tau})\arrowvert F]+\max\{|G|\} 2\eta.
\end{equation}
To bound $\mathbb{E}^{x_{0},t_0}_{S^{*}}[G(x_{\tau},t_{\tau})\arrowvert F]$ we will use again the sets $F_{1}$ and $F_{2}$
as in the previous case. We have
$$
    \mathbb{E}^{x_{0},t_0}_{S^{*}}[G(x_{\tau},t_{\tau})\arrowvert F]=\mathbb{E}^{x_{0},t_0}_{S^{*}}[G(x_{\tau},t_{\tau})\arrowvert F_{1}]\mathbb{P}(F_{1})+\mathbb{E}^{(x_{0},t_0)}_{S^{*}}[G(x_{\tau},t_{\tau})\arrowvert F_{2}]\mathbb{P}(F_{2}).
$$
Now we use that $\mathbb{P}(F_{1}) \leq 1$ and $\mathbb{P}(F_{2})\leq c\eta$ to obtain
$$
    \mathbb{E}^{(x_{0},t_0)}_{S^{*}}[G(x_{\tau},t_{\tau})\arrowvert F]\leq \mathbb{E}^{x_{0},t_0}_{S^{*}}[G(x_{\tau},t_{\tau})\arrowvert F_{1}]+ \max\{|G|\}2\eta.
$$
Using, as before, that $G$ is Lipschitz and that $G(y,s) + L(2a)$ does not depend on the strategies we get
$$
    \begin{array}{rl}\displaystyle\mathbb{E}^{x_{0},t_0}_{S^{*}}[G(x_{\tau},t_{\tau})\arrowvert F]
    &\displaystyle\leq \mathbb{E}^{x_{0},t_0}_{S^{*}}[G(y,s) + L(2a)\arrowvert F_{1}]+ \max\{|G|\}2\eta \\ [2mm]
    &\displaystyle\leq G(y,s) +L(2a) +\max\{lG|\}2\eta ,
    \end{array}
$$
and therefore we conclude that
$$
    \mathbb{E}^{x_{0}t_0}_{S^{*}}[G( x_{\tau},t_{\tau} )]\leq G(y,s) + L(2a) + \max\{G|\}[2\eta].
$$
We have proved that
$$
    \mathbb{E}^{x_{0},t_0}_{S^{*}}[ G( x_{\tau},t_{\tau})]\leq G(y,s) + B(a,\eta)
$$
with $B(a,\eta)\to 0$. Taking infimum over the strategies for the player
$$
    u^{\eps}(x_{0},t_0)\leq G(y,s) + B(a,\eta)
$$
with $B(a,\eta)\rightarrow 0$ as $\eta\rightarrow 0$ and $a\rightarrow 0$.

We conclude that
$$
    |u^{\eps}(x_{0},t_0)-G(y,s)|< \max\{A(a,\eta),B(a,\eta)\} \to 0,
$$
as $\eta\rightarrow 0$ and $a\rightarrow 0$, that holds when $(y,s)\in\partial_p \Omega_T$ and $(x_0,t_0)\in\Omega_T$ is close to $(y,s)$.

Now, given two points $(x_0,t_0)$ and $(z_0,s_0)$ inside $\Omega$ with $|x_0-z_0|<r_0$ and $|t_0-s_0|<r_0$ we couple the game starting at $(x_0,t_0)$ with the game starting at $(z_0,s_0)$ making the same movements at every play. This coupling generates two sequences of positions $\{(x_i,t_i)\}$ and $\{(z_i,s_i)\}$ such that $|x_i - z_i|<r_0$, $|t_i - s_i|<r_0$. This continues until one of the games exits the domain (say at $(z_\tau,s_{\tau}) \not\in \Omega\times (0,T)$). At this point for the game starting at $(x_0,t_0)$ we have that its position $(x_\tau,t_{\tau})$ is close to the exterior point $(z_\tau,s_{\tau}) \not\in \Omega\times (0,T)$ (since we have $|x_\tau - z_\tau|<r_0$ and $|t_\tau - s_\tau|<r_0$) and hence we can use our previous estimates for points close to the boundary to conclude that
\[
    |u^{\eps}(x_{0},t_0)- u^\eps (z_0,s_0)|< \delta. \qedhere
\]
\end{proof}

Now, we are ready to show that the sequence of solutions $u^\eps$ of the games converges uniformly in $\overline{\Omega} \times [0,T]$, and that the limit $u$ is the unique viscosity solution to Problem~\eqref{eq.main.intro}--\eqref{eq.b.intro}.

\begin{proof}[Proof of Theorem \ref{teo.juegos.intro}]
The uniform convergence $$\lim\limits_{\varepsilon\to0}u^\varepsilon=u,$$ up to a subsequence, follows from the Arzela-Ascoli type lemma. From the fact that $u^\eps (x,t) = G(x)$ for $(x,t) \in \partial_p \Omega_T$ we obtain that
$$
    u(x,t) = 0, \qquad x\in \partial \Omega, t>0,
$$
and also we have that
$$
    u(x,0) = u_0 (x), \qquad x \in \Omega.
$$

Let us show now first that $u$ is a viscosity subsolution to the equation in \eqref{eq.main.intro}. To this end, take a smooth function $\varphi\colon\Omega\times[0,\infty)\to \mathbb{R}$ such that $u-\varphi$ has a strict maximum in $B_\delta(x_0)\times (t_0-\delta,t_0+\delta) \subseteq \Omega\times (0,T)$ for some $\delta>0$ at $(x_0,t_0)$. Then, from the uniform convergence of $u^\eps$ to $u$ we have that $u^\eps-\varphi$ has almost a maximum in $B_\delta(x_0)\times (t_0-\delta,t_0+\delta)\subseteq \Omega\times (0,T)$ at $(x_\eps,t_\eps)$, that is, we have that
$$
    u^\eps(y,s)-\varphi (y,s)\leq u^\eps (x_\eps,t_\eps)-\varphi(x_\eps,t_\eps) +\eps^3,
$$
with
$$
    (x_\eps,t_\eps) \to (x_0,t_0) \quad \mbox{ as } \eps \to 0.
$$
Now, let us rewrite the previous inequality as
$$
    u^\eps(y,s)-u^\eps (x_\eps,t_\eps) \leq\varphi (y,s)-\varphi(x_\eps,t_\eps) +\eps^3,
$$
and use the Dynamic Programing Principle (DPP for short) at $(x_\eps,t_\eps)$,
\begin{equation}\label{q18}
    \displaystyle 0= \inf_{b\in [C b^-, C b^+]}
    \dashint_{B_{\varepsilon} (x_\eps)} \left(u^\varepsilon (y, t_\eps - b \varepsilon^2)- u^\varepsilon (x_\eps,t_\eps)\right) \,{\rm d}y,
\end{equation}
to obtain
\begin{equation}\label{q189}
    \displaystyle 0 \leq \inf_{b\in [C b^-, C b^+]}
    \dashint_{B_{\varepsilon} (x_\eps)} \left(\varphi (y, t_\eps - b \varepsilon^2)- \varphi (x_\eps,t_\eps)\right) \,{\rm d}y + o(\eps^2).
\end{equation}
That is,
\begin{equation}\label{q1890}
    \begin{array}{rl}
        0&\displaystyle  \leq \inf_{b\in [C b^-, C b^+]} \left\{\dashint_{B_{\varepsilon} (x_\eps)} \left(\varphi (y, t_\eps - b \varepsilon^2)
        - \varphi (x_\eps, t_\eps - b \varepsilon^2)\right) \,{\rm d}y \right. \\[10pt]
        & \qquad\qquad\qquad \left. \displaystyle +\dashint_{B_{\varepsilon} (x_\eps)} \left(\varphi (x_\eps, t_\eps - b \varepsilon^2) - \varphi (x_\eps,t_\eps)\right) \,{\rm d}y\right\} + o(\eps^2) .
    \end{array}
\end{equation}
Now, we just observe that, since $\varphi$ is smooth, we have
$$
    \dashint_{B_{\varepsilon} (x_\eps)} \left(\varphi (y, t_\eps - b \varepsilon^2)- \varphi (x_\eps, t_\eps - b \varepsilon^2)\right) \,{\rm d}y
    = c(N) \eps^2 \Delta \varphi (x_0,t_0) + o (\eps^2)
$$
and
$$
    \dashint_{B_{\varepsilon} (x_\eps)} \left(\varphi (x_\eps, t_\eps - b \varepsilon^2) - \varphi (x_\eps,t_\eps)\right) \,{\rm d}y =
    - \eps^2 b \frac{\partial \varphi}{\partial t} (x_0,t_0) + o(\eps^2).
$$
We have used the well known formula, for $f\in \mathcal{C}^2(B_\rho)$, for some $\rho>0$,
$$
    \lim_{\eps\to0}\frac1{\eps^{2}}\dashint_{B_\eps}(f(x)-f(0))\, {\rm d}x=c(N)\Delta f(0),
$$
where $c(N)$ is the universal constant given by
$$
    c(N) = \frac12\dashint_{B_1} y_1^2 \,{\rm d}y = \frac{1}{2N(N+2)}.
$$

Going back to \eqref{q1890} we get
\begin{equation}\label{q18901}
    \begin{array}{rl}
        0 &\displaystyle\leq \inf_{b\in [C b^-, C b^+]}c(N) \eps^2 \Delta \varphi (x_0,t_0)- \eps^2 b \frac{\partial \varphi}{\partial t} (x_0,t_0) + o(\eps^2) \\[10pt]
        & \displaystyle =  c(N) \eps^2 \Delta \varphi (x_0,t_0) - \sup_{b\in [C b^-, C b^+]} \eps^2 b \frac{\partial \varphi}{\partial t} (x_0,t_0) + o(\eps^2).
    \end{array}
\end{equation}
Dividing by $\eps^2$ and taking $\eps\to 0$ we arrive to
\begin{equation} \label{q18901a}
    \begin{array}{l}
        \displaystyle 0 \leq c(N)  \Delta \varphi (x_0,t_0) - \sup_{b\in [C b^-,C b^+]}  b \frac{\partial \varphi}{\partial t} (x_0,t_0) ,
    \end{array}
\end{equation}
that is, taking $b/C$ instead of $b$ when computing the supremum, we get
$$
    \displaystyle b(\varphi_t) \varphi_t (x_0,t_0) = C \sup_{b\in [b^-, b^+]}  b \frac{\partial \varphi}{\partial t} (x_0,t_0) \leq c(N)  \Delta \varphi (x_0,t_0),
$$
as we wanted to show if we let $C=c(N)$.

To end the proof, we must check that $u$ is also a viscosity supersolution to the equation in \eqref{eq.main.intro}, and the proof is completely analogous. Here we take a smooth function $\varphi\colon\Omega\times[0,\infty)\to \mathbb{R}$ such that $u-\varphi$ has a strict minimum in $B_\delta(x_0)\times (t_0-\delta,t_0+\delta)\subseteq \Omega\times (0,T)$ for some $\delta>0$ at $(x_0,t_0)$. Then, arguing as before, using the uniform convergence $u^\eps\to u$, we get that
$$
    u^\eps(y,s)-\varphi (y,s) \geq u^\eps (x_\eps,t_\eps)-\varphi(x_\eps,t_\eps) -\eps^3,
$$
with, as before,
$$
    (x_\eps,t_\eps) \to (x_0,t_0) \quad \mbox{ as } \eps \to 0.
$$
Then
$$
    u^\eps(y,s)-u^\eps (x_\eps,t_\eps) \geq\varphi (y,s)-\varphi(x_\eps,t_\eps) -o(\eps^2)
$$
and by the DPP at $(x_\eps,t_\eps)$,
\begin{equation}\label{q18-}
    \displaystyle 0= \inf_{b\in [C b^-, C b^+]}
    \dashint_{B_{\varepsilon} (x_\eps)} \left(u^\varepsilon (y, t_\eps - b \varepsilon^2)- u^\varepsilon (x_\eps,t_\eps)\right) \,{\rm d}y,
\end{equation}
we obtain
\begin{equation}\label{q189-}
    \displaystyle 0 \geq \inf_{b\in [C b^-, C b^+]}
    \dashint_{B_{\varepsilon} (x_\eps)} \left(\varphi (y, t_\eps - b \varepsilon^2)- \varphi (x_\eps,t_\eps)\right) \,{\rm d}y -o(\eps^2).
\end{equation}
That is,
    \begin{equation} \label{q1890-}
        \begin{array}{rl}
        0 &\displaystyle\geq \inf_{b\in [C b^-, C b^+]}\dashint_{B_{\varepsilon} (x_\eps)} \left(\varphi (y, t_\eps - b \varepsilon^2)
        - \varphi (x_\eps, t_\eps - b \varepsilon^2)\right) \,{\rm d}y \\[10pt]
        & \qquad \qquad \qquad \displaystyle +\dashint_{B_{\varepsilon} (x_\eps)} \left(\varphi (x_\eps, t_\eps - b \varepsilon^2) - \varphi (x_\eps,t_\eps)\right) \,{\rm d}y - o(\eps^2).
    \end{array}
\end{equation}
Now using that $\varphi$ is smooth, with the same computations used before we deduce
\begin{equation} \label{q18901-}
    \begin{array}{rl}
        0&\displaystyle \geq \inf_{b\in [C b^-, C b^+]}c(N) \eps^2 \Delta \varphi (x_0,t_0)- \eps^2 b \frac{\partial \varphi}{\partial t} (x_0,t_0) + o(\eps^2) \\[10pt]
        &\displaystyle=c(N)\eps^2\Delta\varphi(x_0,t_0)-\sup_{b\in [b^-,b^+]} \eps^2 b \frac{\partial \varphi}{\partial t} (x_0,t_0) + o(\eps^2).
    \end{array}
\end{equation}
Dividing by $\eps^2$ and taking $\eps\to 0$ we arrive to
\begin{equation} \label{q18901a-}
    \begin{array}{l}
        \displaystyle 0 \geq c(N)  \Delta \varphi (x_0,t_0) - \sup_{b\in [C b^-, C b^+]}  b \frac{\partial \varphi}{\partial t} (x_0,t_0) ,
    \end{array}
\end{equation}
that is,
$$
    \displaystyle C b(\varphi_t) \varphi_t (x_0,t_0) = C \sup_{b\in [b^-,b^+]}  b \frac{\partial \varphi}{\partial t} (x_0,t_0) \geq c(N)  \Delta \varphi (x_0,t_0),
$$
which shows that $u$ is a viscosity supersolution to the equation.

Hence we have obtained that any possible limit along subsequences $\varepsilon_j\to 0$ of $u^\varepsilon$ solves Problem~\eqref{eq.main.intro}--\eqref{eq.b.intro}. From the uniqueness of solutions to Problem~\eqref{eq.main.intro}--\eqref{eq.b.intro}
we conclude the uniform convergence
$$
    \lim\limits_{\varepsilon\to0}u^\varepsilon=u,
$$
which finishes the proof.
\end{proof}

\section*{Acknowledgements}

This research has been supported by MICIU/AEI/10.13039/501100011033, through grant CEX2023-001347-S.

\textsc{A.\,de Pablo} and \textsc{F.\,Quir\'os} were also supported by grants PID2023-146931NB-I00, RED2022-134784-T, and RED2024-153842-T, all of them funded by MICIU/AEI/10.13039/501100011033.

\textsc{J.\,D.\,Rossi} was also supported by CONICET grants PIP GI No 11220150100036CO, PICT-2018-03183 and UBACyT grant 20020160100155BA, Argentina.


\bibliographystyle{plain}

\end{document}